\documentclass[a4paper,10pt,draft]{amsart}

\title[$\textup{m}$-microlocal ellipticity in $L^p$]{$\textup{m}$-microlocal elliptic pseudodifferential operators acting on  $L^p_\textup{loc}(\Omega)$}

\usepackage{cite}
\usepackage{amssymb}
\usepackage{latexsym}
\usepackage{amsmath}
\usepackage{mathrsfs}
\usepackage[X2,T1]{fontenc}
\usepackage{amsthm}
\usepackage{upgreek}
\usepackage{calc}

\def\aa{\`{a}\ }

\setcounter{section}{\value{section}-1}   

\numberwithin{equation}{section}          
\newtheorem{thm}{Theorem}
\numberwithin{thm}{section}

\newcommand{\rubrik}{}
\newtheorem{prop}[thm]{Proposition}

\newtheorem{lemma}[thm]{Lemma}

\newtheorem{defn}[thm]{Definition}

\theoremstyle{remark}

\newtheorem{rem}[thm]{Remark}
\newtheorem{ex}[thm]{Example}


\author{Gianluca Garello}

\address{Dipartimento di Matematica Universit\aa di Torino\\
Via Carlo Alberto 10\\
I-10123, Torino,\\
Italy }

\email{gianluca.garello@unito.it}

\author{Alessandro Morando}

\address{DICATAM - Sezione di Matematica\\
Universit\`a di Brescia\\
Via Valotti 9\\
I-25133, Brescia\\
Italy}
\email{alessandro.morando@unibs.it}

\begin{document}

\keywords{pseudodifferential operators,weighted Sobolev spaces}

\subjclass[2000]{Primary 35S05, ; Secondary 47G30,46E35}

\begin{abstract} In the first part of the paper the authors study the minimal and maximal extension of a class of weighted pseudodifferential operators in the Fr\'echet space $L^p_\textup{loc}(\Omega)$.
In the second one non homogeneous microlocal properties are introduced and propagation of Sobolev singularities for solutions to (pseudo)differential equations is given.  For both the arguments actual examples are provided.

\end{abstract}

\maketitle
\section{introduction}\label{INT}
Let us fix the attention on the pseudodifferential operator with classical quantization defined by:
\begin{equation}\label{eqINT1}
a(x,D)u=(2\pi)^{-n}\int e^{i x\cdot\xi} a(x,\xi)\hat u(\xi)\, d\xi, \quad u\in C^\infty_0(\Omega),
\end{equation}
where $\hat u(\xi)$ is the Fourier transform of $u\in C^\infty_0(\Omega)$, $\Omega$ open subset of $\mathbb R^n$. The symbol $a(x, \xi)$ is considered  in the H\"ormander local symbol classes  $S^m_{\rho,\delta}(\Omega)$, $m\in \mathbb R$, $0\leq\delta<\rho\leq 1$.
The problem of  $L^p$ boundedness, $1<p<\infty$, when $\rho=1$ is completely solved from the beginning  of the pseudodifferential operators theory. The same is true for the study of the regularity of solutions to elliptic pseudodifferential equations.  \\
The matter is completely different when we consider $\rho< 1$. Feffermann in \cite{FE1}, 1973, proved that an operator $a(x,D)$ with  symbol in the class $S^{-m}_{\rho, \delta}$ is bounded into $L^p$ if $n(1-\rho)\left\vert \frac {1}{p}-\frac{1}{2} \right\vert\leq m<\frac{n}{2}(1-\rho)$, the result is sharp.\\
Also the generalized weighted symbol classes introduced by R. Beals in \cite{BE1}, \cite{BE2}, \cite{BE3} require that $\delta<\rho=1$ for obtaining  $L^p$ bounded zero order  pseudodifferential operators, see \cite[\S 1]{BE3}.\\
M. E. Taylor in \cite[\S XI]{TA1} introduces the subclass $M^m_\rho(\Omega)$ of the symbols $a(x,\xi)$ in $S^m_{\rho, 0}(\Omega)$ such that $\xi^\gamma\partial^\gamma_\xi a(x,\xi)\in S^m_{\rho,0}(\Omega)$, for any multi-index $\gamma\in\mathbb Z^n_+$ which has only components equal to zero or one. $M^0_\rho(\Omega)$ defines a class of $L^p$ bounded pseudodifferential operators which moreover satisfy local regularity properties when their symbols are elliptic, see \cite{TA1}, \cite {GM1}.\\
Results of $L^p$ continuity for pseudodifferential operators with symbols $a(x,\xi)$ in weighted Sobolev and Besov spaces, with respect to the $x$ variable, which satisfy suitable estimates at infinity on the  derivatives with respect to $\xi$, can be found in \cite{GM4}, \cite{GM5}, \cite{GM6}, togheter with the regularity for the respective weighted elliptic operators.\\
In the present paper, following the approach of Rodino \cite{RO1}, we introduce a class of local vector weighted symbols: $S_{m,\Lambda}(\Omega)$ where $m(\xi)$ and $\Lambda(\xi)=(\lambda_1(\xi), \dots, \lambda_n(\xi))$ are  positive continuous weight function and  weight vector. The corresponding pseudodifferential operators will be called in the following $m$-pseudodifferential operators and they could be considered in the frame of general pseudodifferential calculus of R. Beals \cite{BE1}, H\"ormander \cite{HO2}.
Thanks to the assumptions on the weight vector $\Lambda(\xi)$, the classes  $S_{m,\Lambda}(\Omega )$ satisfy a condition of Taylor type and the $m$-pseudodifferential operators are  $L^p$  continuous when the weight $m(\xi)$ is bounded, see \cite{GM1} and the next Theorem \ref{CONT}\\
Relying on these arguments  we construct  in \S\ref{MM} the minimal and maximal  extensions in the local Fr\'echet space $L^p_\textup{loc}(\Omega)$ for the $m$-pseudodifferential operators with $m(\xi)$ general unbounded weight, see \cite{WO2} for similar arguments in the global Banach space $L^p$. When  $a(x,\xi)$ is elliptic in generalized sense, with respect to $m(\xi)$, we prove that  the minimal and maximal extensions coincide and they have as domain the weighted local Sobolev space $H^p_{m, \textup{loc}}(\Omega)$.\\
The second aim of the paper is the study of the microlocal properties of $m$-pseudodifferential operators. From this point of view the main problem consists  in the complete lack of any homogeneity property of the weights $m(\xi)$ and $\Lambda(\xi)$. For this reason the set of the points where the symbol $a(x,\xi) \in S_{m,\Lambda}(\Omega)$ is not elliptic, say the $m$-characteristic set of $a(x,D)$, has not any conic properties with respect to  the $\xi$ variable. Thus for describing the microlocal Sobolev regularity of a distribution $u\in \mathcal D'(\Omega)$, we cannot use  the conic  neighborhoods as done in the classical definition of the H\"ormander wave front set, see \cite[I]{HO1}. Also the quasi-homogeneous neighborhoods introduced  by Lascar \cite {LA1}, se also \cite{GM3}, are not useful  in this case. For this reason in \S\ref{MP}, following the arguments in \cite{RO1}, \cite{GA92}, \cite{GM2} using a suitably defined neighborhood of  any set $X\subset \mathbb R^n_\xi$, we introduce the concepts of characteristic filter of a $m$-pseudodifferential operator and of filter of weighted Sobolev regularity of $u\in \mathcal D'(\Omega)$. We can then give a result of microlocal propagation of singularities for solutions to (pseudo)differential equations. Some applications to linear partial differential  operators are  given in the last \S \ref{EX}.

\section{$\textup{m}$-pseudodifferential operators on $L^p$}\label{PDO}
{\bf Notations.}
For $\chi (\xi), \kappa (\xi)$ positive continuous functions of $\xi\in\mathbb R^n$ and $C, c$ positive constants, we set:
\begin{itemize}
\item $\chi(\xi)\asymp\kappa(\xi)$, if $c\leq \frac{\chi(\xi)}{\kappa(\xi)}\leq C$, for any $\xi \in\mathbb R^n $;
\item $\chi(\xi)\approx\chi(\eta)$ in a domain $D$ if  $c\leq \frac{\chi(\eta)}{\chi(\xi)}\leq C$ , for any $\xi, \eta\in D$;
\item $\langle \xi\rangle=\sqrt{1+\vert \xi\vert^2}$;
\item $K\subset\!\!\subset \Omega$ when $K$ is a compact subset of $\Omega$.
\end{itemize}
\begin{defn}\label{DEFWS1} A vector valued function $\Lambda(\xi)=\left( \lambda_1(\xi), \dots, \lambda_n(\xi) \right)$, $\xi \in\mathbb R^n$, with positive continuous components, is a weight vector if there exist positive constants $C, c$ such that for any $j=1,\dots,n$:
\begin{eqnarray}
  && c\langle \xi\rangle^c \le \lambda_j(\xi)\le C\langle \xi\rangle^C \, \text{(polynomial growth)};\label{PG}\\
&&\lambda_j(\xi)\ge c\vert\xi_j\vert\,\, \,\text{(M-condition)};\label{MA}\\
&& \lambda_j(\eta)\approx\lambda_j (\xi)\quad \text{when}\quad \sum_{k=1}^n\vert\xi_k-\eta_k\vert \lambda_k(\eta)^{-1}\le c\,\, \text{(slowly varying condition)}.\label{SW}
\end{eqnarray}
A positive real continuous function $m(\xi)$ is an admissible weight, associated to the weight vector $\Lambda(\xi)$, if for some positive constants $N, C ,c$
\begin{eqnarray}
&& m(\eta)\le C\, m(\xi) \left(  1+\vert \eta-\xi\vert\right)^N\,\, \text{(temperance)} ;\label{TEMP}\\
&& m(\eta)\approx m(\xi)\quad \text{when}\quad \sum_{k=1}^n\vert\xi_k-\eta_k\vert\lambda_k(\eta)^{-1}\le c.\label{SW1}
\end{eqnarray}
\end{defn}
Considering  $\eta=0$ and $\xi=0$ in \eqref{TEMP}, it follows that $c\langle \xi\rangle^{-N}\leq m(\xi)\leq C\langle\xi\rangle^N$.\\
It is trivial that any positive constant function on $\mathbb R^n$ is an admissible weight associated to any weight vector $\Lambda(\xi)$.\\
Consider $\tilde\Lambda(\xi)\asymp\Lambda(\xi)$, that is $\lambda_j(\xi)\asymp\tilde\lambda_j(\xi)$, $j=1,\dots,n$, then $\tilde\Lambda(\xi)$ is again a weight vector. Similarly $\tilde m(\xi)\asymp m(\xi)$ an admissible weight.
\begin{ex}\label{EXWS1}
\begin{enumerate}
\item
Consider the function $\langle \xi\rangle_M=\left(1+\sum_{j=1}^n \xi_j^{2m_j}\right)^{1/2}$, the so-called {\it quasi-homogeneous weight}, where $M=(m_1, \dots,m_n)\in \mathbb N^n $ and $\min\limits_{1\leq j\leq n} m_j=1$.
Then $\Lambda_M(\xi)=\left(\langle \xi\rangle_M ^{1/m_1},\dots, \langle\xi\rangle_M^{1/m_n} \right)$ is a weight vector.
\item Assume that the continuous function $\lambda(\xi)$ satisfies \eqref{PG} and the \emph{strong} slowly varying condition
\begin{equation}\label{SW2}
\lambda(\eta)\approx\lambda(\xi),\, \text{when for some}\, c,\mu>0\quad  \sum\limits_{j=1}^n\vert\eta_j-\xi_j\vert\left(\lambda(\eta)^{\frac1{\mu}}+\vert\eta_j\vert\right)^{-1}\leq c,
\end{equation}
then the vector $\Lambda(\xi):=\left(\lambda(\xi)^{\frac{1}{\mu}}+\vert\xi_1\vert,\dots,\lambda(\xi)^{\frac{1}{\mu}}+\vert\xi_n\vert\right)$ is a weight vector, see \cite[Proposition 1]{GM2} for the proof.
In such frame emphasis is given to the {\it multi-quasi-homogeneous} weight functions $\lambda_{\mathcal P}(\xi)=\left(\sum_{\alpha\in V(\mathcal P)}\xi^{2\alpha}\right)^{1/2}$, where $V(\mathcal P)$ is the set of the vertices of a {\em complete Newton polyhedron} $\mathcal P$ as introduced in \cite{GV}, see also \cite{BBR}; in this case, the value $\mu$ in \eqref{SW2} is  called {\em formal order} of $\mathcal P$. For some  details see Section \ref{EX} 

\item For any $s\in \mathbb R$, the functions $\langle \xi\rangle_M^s$, $\lambda(\xi)^s$ are admissible weights for the weight vectors respectively defined in 1. and 2.
\end{enumerate}
\end{ex}
\begin{rem}\label{REMWS1}
Consider the function $\lambda(\xi)$ such that $\lambda(\eta)\approx\lambda(\xi)$ when $\vert\eta-\xi\vert<c\lambda(\eta)^{\frac 1\mu}$,  for suitable  positive constants $\mu, c$. Since $\vert \xi-\eta\vert^\mu\leq c\lambda(\eta)$ implies $\lambda(\eta)\leq C\lambda(\xi)\leq C\lambda(\xi)\left(1+\vert \xi-\eta\vert\right)^{\mu}$, using moreover \eqref{PG}, we obtain that $\lambda(\xi)$ satisfies the temperance condition \eqref{TEMP} with constant $N=\mu$.
\end{rem}
\begin{prop}\label{PROWS1}
For $\Lambda(\xi)=\left(\lambda_1(\xi), \dots, \lambda_n(\xi)\right)$ weight vector, the following properties are satisfied:
\begin{itemize}
\item[i)]the function:
\begin{equation}\label{EQWS0}
\pi(\xi)=\min_{1\leq j\leq n}\lambda_j(\xi), \quad \xi\in \mathbb R^n
\end{equation}
is an admissible weight associated to $\Lambda(\xi)$ and it moreover satisfies \eqref{SW2}, with $\mu=1$;
\item[ii)] If $m,m'$ are admissible weights associated to the weight vector ${\Lambda(\xi)}$, then the same property is fulfilled by $mm'$ and $1/m$.
\end{itemize}
\end{prop}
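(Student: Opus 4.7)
\medskip

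\noindent\textbf{Plan of proof.} For (i), the first task is the polynomial growth \eqref{PG} for $\pi(\xi)=\min_j\lambda_j(\xi)$. This is immediate from \eqref{PG} applied to each $\lambda_j$, since the minimum of finitely many functions satisfying uniform two-sided polynomial bounds satisfies the same bounds (perhaps with the same $c,C$ replaced by $\min$ and $\max$).

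Next I would prove the strong slowly varying condition \eqref{SW2} with $\mu=1$ for $\pi$; the other slowly varying condition \eqref{SW1} will come as a by-product. The key observation is that the M-condition \eqref{MA} gives $\lambda_j(\eta)\ge c|\eta_j|$, and by definition $\lambda_j(\eta)\ge \pi(\eta)$; averaging these two lower bounds one obtains a constant $c'>0$ such that
\begin{equation*}
\lambda_j(\eta)\ge c'\bigl(\pi(\eta)+|\eta_j|\bigr),\qquad j=1,\dots,n.
\end{equation*}
Thus whenever $\sum_{j}|\eta_j-\xi_j|\bigl(\pi(\eta)+|\eta_j|\bigr)^{-1}\le c$, one has $\sum_{j}|\eta_j-\xi_j|\lambda_j(\eta)^{-1}\le c/c'$, so the slowly varying condition \eqref{SW} for \emph{every} $\lambda_j$ applies with uniform constants. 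Hence $\lambda_j(\eta)\approx\lambda_j(\xi)$ for every $j$ with the same constants, and taking the minimum over $j$ preserves the two-sided bound; this yields $\pi(\eta)\approx\pi(\xi)$, proving \eqref{SW2} with $\mu=1$. The same argument with the hypothesis $\sum_{j}|\eta_j-\xi_j|\lambda_j(\eta)^{-1}\le c$ gives \eqref{SW1} for $\pi$ directly.

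To finish (i) it remains to verify the temperance \eqref{TEMP} of $\pi$. Since \eqref{SW2} with $\mu=1$ implies, in particular, that $\pi(\eta)\approx\pi(\xi)$ whenever $|\eta-\xi|\le c\,\pi(\eta)$ (because $\pi(\eta)^{1/\mu}+|\eta_j|\ge \pi(\eta)$), this is exactly the hypothesis of Remark \ref{REMWS1} with $\mu=1$; combined with \eqref{PG} already shown for $\pi$, the Remark delivers \eqref{TEMP} with $N=1$.

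For (ii) there is no real obstacle: the two conditions \eqref{TEMP} and \eqref{SW1} behave multiplicatively. For $mm'$, temperance follows by multiplying the two inequalities and summing the exponents $N,N'$; the slowly varying condition follows since a product of two expressions each $\approx\!1$ is again $\approx\!1$. For $1/m$, temperance $\tfrac1{m(\eta)}\le C\tfrac1{m(\xi)}(1+|\eta-\xi|)^N$ is just \eqref{TEMP} applied to $m$ with the roles of $\xi$ and $\eta$ interchanged (the inequality is symmetric in the $(1+|\eta-\xi|)$ factor), and the slowly varying estimate for $1/m$ is the reciprocal of the slowly varying estimate for $m$.

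The only mildly delicate step is the passage from $\pi(\eta)+|\eta_j|$ to $\lambda_j(\eta)$ via the M-condition in the proof of \eqref{SW2}; everything else is bookkeeping with the definitions.
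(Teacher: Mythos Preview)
Your proposal is correct and follows essentially the same route as the paper: you pass from the hypothesis $\sum_j|\xi_j-\eta_j|(\pi(\eta)+|\eta_j|)^{-1}\le c$ to $\sum_j|\xi_j-\eta_j|\lambda_j(\eta)^{-1}\le\tilde c$ via the M-condition and the definition of $\pi$, apply \eqref{SW} componentwise, take the minimum, and then invoke Remark~\ref{REMWS1} for temperance; part (ii) is handled identically by swapping $\xi$ and $\eta$. The only differences are cosmetic: you spell out the polynomial growth of $\pi$ and the ``averaging'' inequality $\lambda_j(\eta)\ge c'(\pi(\eta)+|\eta_j|)$ explicitly, whereas the paper leaves these implicit.
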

\begin{proof}
In view of \eqref{MA} and \eqref{EQWS0}, the assumption $\sum\limits_{k=1}^n\vert\xi_k-\eta_k\vert\left(\pi(\eta)+\vert\eta_k\vert\right)^{-1}\leq c$ directly gives $\sum\limits_{k=1}^n\vert \xi_k-\eta_k\vert \lambda_k(\eta)^{-1}\leq \tilde c$, where $\tilde c>0$ depends increasingly on $c$. Then for suitably small $c$, we obtain from   the slowly varying condition \eqref{SW} and some $C>0$: $\frac{1}{C}\lambda_j(\xi)\leq\lambda_j(\eta)\leq C\lambda_j(\xi)$, for any $j=1,\dots,n$. It then follows:
$\frac{1}C\pi(\xi)=\frac{1}{C}\min\limits_j\lambda_j(\xi)\leq\pi(\eta)=\min\limits_j\lambda_j(\eta)\leq C \min\limits_j\lambda_j(\xi)=C \pi(\xi)$. Thus $\pi(\xi)$ satisfies \eqref{SW2} and in the same way we can prove that it fulfils \eqref{SW1}. Then by means of the previous remark, i) is proved.\\
$m(\eta)\leq Cm(\xi)\left( 1 + \vert\xi-\eta\vert\right)^N\iff 1/m(\xi)\leq C 1/m(\eta)\left(1+\vert\xi-\eta\vert\right)^N$; then interchanging $\xi$ and $\eta$ we immediately obtain that $1/m$ is temperate. It is then trivial to prove the remaining part of ii).
\end{proof}

\begin{defn}\label{DEFWS2}
For $\Omega$ open subset of $\mathbb R^n$, $\Lambda(\xi)$ weight vector and $m(\xi)$ admissible weight, the symbol class $S_{m,\Lambda}(\Omega)$ is given by all the functions $a(x,\xi)\in C^\infty(\Omega\times\mathbb R^n)$, such that, for any  $K\subset\!\! \subset\Omega$, $\alpha,\beta\in \mathbb Z^n_+$ and suitable $c_{\alpha, \beta, K}>0$:
\begin{equation}\label{EQWS1}
\sup_{x\in K}\vert \partial_\xi^\alpha\partial_x^\beta a(x,\xi)\vert\le c_{\alpha,\beta, K}\, m(\xi)\Lambda(\xi)^{-\alpha}, \quad  \xi\in \mathbb R^n
\end{equation}
where, with standard vectorial notation, $\Lambda(\xi)^\gamma=\prod_{k=1}^n \lambda_k(\xi)^{\gamma_k}$.
\end{defn}
\noindent
$S_{m, \Lambda}(\Omega)$ turns out to be a Fr\'echet space, with respect to the family of natural semi-norms defined as the best constants $c_{\alpha,\beta, K}$ involved in the estimates \eqref{EQWS1}.

Henceforth $\Lambda(\xi)$ will always be a weight vector and all  the admissible weights $m(\xi)$  will be referred to it.

\begin{rem}\label{REMWS2}
\begin{enumerate}
\item
Considering the constants $C,c$ in \eqref{PG} and $N$ in \eqref{TEMP}, the following relation with the usual H\"ormander \cite{HO1} symbol classes $S^m_{\rho,\delta}(\Omega)$, $0\leq\delta<\rho\leq 1$, is trivial:
\begin{equation} \label{EQWS2A}
S_{m,\Lambda}(\Omega)\subset S^N_{c, 0}(\Omega)\,.
\end{equation}
\item
If $m_1, m_2$ are admissible weights such that $m_1\leq Cm_2$, then $S_{m_1, \Lambda}(\Omega)\subset S_{m_2, \Lambda}(\Omega)$, with continuous imbedding. In particular  $S_{m_1,\Lambda}(\Omega)=S_{m_2,\Lambda}(\Omega)$, as long as $m_1\asymp m_2$.
\newline
When the admissible weight $m$ is an arbitrary positive constant function, the symbol class $S_{m,\Lambda}(\Omega)$ will be just denoted by $S_\Lambda(\Omega)$ and $a(x,\xi)\in S_\Lambda(\Omega)$ will be called a {\it zero order symbol}.
\item
Since for any $k\in\mathbb Z_+$ the admissible weight $\pi(\xi)^{-k}$ is less than $C^k\langle \xi \rangle^{-ck}$, for $m$ admissible weight we have
\begin{equation*}\label{EQWS2B}
\bigcap_{k\in\mathbb Z+}S_{m\pi^{-k},\Lambda}(\Omega)\subset \bigcap_{N\in\mathbb Z_+}S^{-N}_{1,0}(\Omega)=:S^{-\infty}(\Omega)\,.
\end{equation*}
On the other hand $a(x,\xi)\in S^{-\infty}(\Omega)$ means that, for any $\mu\in \mathbb R$, $ K\subset\!\!\subset \Omega$,
\begin{equation}\label{EQWS2C}
\sup_{x\in K}\vert\partial^\beta_x\partial^\alpha_\xi a(x,\xi)\vert\leq c_{\alpha,\beta, K}\langle\xi\rangle^{\mu-\vert\alpha\vert}.
\end{equation}
Recall now  that, for suitable $N,C>0$, $m(\xi)\geq \frac{1}{C}\langle\xi\rangle^{-N}$, $\pi(\xi)\leq C\langle\xi\rangle$ and $\lambda_j(\xi)\leq C\langle\xi\rangle^C$. Then setting,  for any fixed $\alpha \in\mathbb Z_+^n$ and arbitrary $k\in\mathbb Z_+$, $\mu=-N-k-(C-1)\vert\alpha\vert$ in \eqref{EQWS2C}, we obtain $\vert\partial^\beta_x\partial^\alpha_\xi a(x,\xi)\vert \leq c_{\alpha,\beta}m(\xi)\pi(\xi)^{-k}\Lambda(\xi)^{-\alpha}$, for suitable $c_{\alpha,\beta}$, that is $a(x,\xi)\in S_{m\pi^{-k},\Lambda}(\Omega)$ for any $k\in\mathbb Z_+$. Then
\begin{equation}\label{EQWS2D}
\bigcap_{k\in\mathbb Z_+}S_{m\pi^{-k},\Lambda}(\Omega)\equiv S^{-\infty}(\Omega).
\end{equation}
\item
Thanks to the relations with the H\"ormander symbol classes \eqref{EQWS2A}, we can define for $a(x,\xi)\in S_{m,\Lambda}(\Omega)$ the $m-$pseudodifferential operator $a(x,D)$ by means of \eqref{eqINT1}.\\
For any weight vector $\Lambda(\xi)$ and  $m(\xi)$ admissible weight, the operator $a(x,D)$ maps continuously $C^\infty_0(\Omega)$ to  $C^\infty(\Omega)$  and it extends to a bounded  linear operator from $\mathcal E'(\Omega)$ to $\mathcal D'(\Omega)$.
\item
By  $\textup{Op}S_{m,\Lambda}(\Omega)$ we denote the class of all the $m-$pseudodifferential operators with symbol in $S_{m,\Lambda}(\Omega)$, while $\widetilde{\textup{Op}}S_{m,\Lambda}(\Omega)$ is the class of {\em properly supported} $m-$pseudodifferential operators  which map $C^\infty_0(\Omega )$ to $C^\infty_0(\Omega)$, $C^\infty(\Omega)$ to $C^\infty(\Omega)$ and extend to  bounded linear operators into $\mathcal E'(\Omega)$ and $\mathcal D'(\Omega)$.
\item
When $a(x,D)$ is properly supported, for any fixed $\psi\in C^\infty_0(\Omega)$ we can find $\phi\in C^\infty_0(\Omega)$ such that $\psi a(x,D)u=\psi a(x,D)\phi u$, for any $u\in \mathcal D'(\Omega)$, see \cite[Prop. 3.4]{SH1}.
Moreover, for any $a(x, D)\in \textup{Op} S_{m,\Lambda}(\Omega)$, there exists $a'(x, D)\in\widetilde{\textup{Op}} S_{m,\Lambda}(\Omega)$ whose symbol satisfies $a'(x,\xi)\sim a(x,\xi)$, that is $a'(x,\xi)-a(x,\xi)\in S^{-\infty}(\Omega)$.
\end{enumerate}
\end{rem}
By means of the arguments in \cite[Proposition 1.1.6]{NR1}  and \cite{BE1}, jointly with Remark \ref{REMWS2} we obtain the following asymptotic expansion.
\begin{prop}\label{PROPWS2}
For any sequence of symbols $a_k(x,\xi)\in S_{m\pi^{-k},\Lambda}(\Omega)$, $k\in\mathbb Z_+$ there exists $a(x,\xi)\in S_{m,\Lambda}(\Omega)$ such that for every integer $N\ge 1$:
\begin{equation}\label{EQWS2F}
a(x,\xi)-\sum_{k<N}a_k(x,\xi)\in S_{m\pi^{-N},\Lambda}(\Omega)\,.
\end{equation}
Moreover $a(x,\xi)$ is uniquely defined modulo  symbols in $S^{-\infty}(\Omega)$.
\end{prop}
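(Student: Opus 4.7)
\medskip

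\noindent\textbf{Plan of proof.} The statement is a Borel-type asymptotic summation in the weighted calculus, and the natural strategy is the classical one of summing the $a_k$ after excising a sufficiently large ball around the origin in $\xi$-space, with the radius growing fast enough in $k$. I would fix an excision function $\chi\in C^\infty(\mathbb R^n)$ with $\chi(\xi)=0$ for $|\xi|\le 1/2$ and $\chi(\xi)=1$ for $|\xi|\ge 1$, pick an exhaustion $K_0\subset\!\!\subset K_1\subset\!\!\subset\dots$ of $\Omega$, and look for a sequence $t_k\nearrow\infty$ such that
\[
a(x,\xi):=\sum_{k=0}^{\infty}\chi(\xi/t_k)\,a_k(x,\xi)
\]
converges in $C^\infty(\Omega\times\mathbb R^n)$ and satisfies \eqref{EQWS2F}.

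\medskip

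\noindent\textbf{Choice of $t_k$.} The key quantitative step is to choose $t_k$ so large that, for every $|\alpha|+|\beta|\le k$ and every $x\in K_k$,
\[
\bigl|\partial^{\alpha}_\xi\partial^{\beta}_x\bigl(\chi(\xi/t_k)\,a_k(x,\xi)\bigr)\bigr|
\;\le\;2^{-k}\,m(\xi)\,\pi(\xi)^{-(k-1)}\Lambda(\xi)^{-\alpha}.
\]
Each $\xi$-derivative falling on $\chi(\xi/t_k)$ produces a factor $t_k^{-1}$ and forces $|\xi|\le t_k$ on its support; using $\lambda_j(\xi)\le C\langle\xi\rangle^C$ from \eqref{PG}, this costs at most a polynomial power of $t_k$ when re-expressed through $\Lambda(\xi)^{-\alpha}$. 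On the support of $\chi(\xi/t_k)$ one has $|\xi|\ge t_k/2$, hence, again by \eqref{PG} applied to $\pi$, $\pi(\xi)\ge c\,t_k^{c}$. The surplus power $\pi(\xi)^{-1}\lesssim t_k^{-c}$ (present because $a_k\in S_{m\pi^{-k},\Lambda}$ and we only demand control by $m\pi^{-(k-1)}\Lambda^{-\alpha}$) beats any polynomial loss in $t_k$ coming from derivatives of $\chi(\xi/t_k)$, so $t_k$ can be chosen to force the $2^{-k}$ bound.

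\medskip

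\noindent\textbf{Convergence and the remainder.} With this choice the partial sums converge in $C^\infty(\Omega\times\mathbb R^n)$: for each compact $K\subset\!\!\subset\Omega$ and each pair $\alpha,\beta$, only finitely many indices $k$ fail the conditions $K\subset K_k$ and $|\alpha|+|\beta|\le k$, while the remaining indices contribute a geometric series. Summing the previous estimate over $k$ and using $m\pi^{-(k-1)}\le m$ yields $a\in S_{m,\Lambda}(\Omega)$. To get \eqref{EQWS2F} I would split
\[
a(x,\xi)-\sum_{k<N}a_k(x,\xi)=-\sum_{k<N}\bigl(1-\chi(\xi/t_k)\bigr)a_k(x,\xi)+\sum_{k\ge N}\chi(\xi/t_k)\,a_k(x,\xi).
\]
The first sum is a finite sum of symbols compactly supported in $\xi$, hence in $S^{-\infty}(\Omega)\subset S_{m\pi^{-N},\Lambda}(\Omega)$ by \eqref{EQWS2D}. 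For the second, each term lies in $S_{m\pi^{-k},\Lambda}(\Omega)\subset S_{m\pi^{-N},\Lambda}(\Omega)$ for $k\ge N$, and the seminorm estimate from the choice of $t_k$ makes the series converge in $S_{m\pi^{-N},\Lambda}(\Omega)$.

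\medskip

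\noindent\textbf{Uniqueness and main difficulty.} If $a,\tilde a$ both satisfy \eqref{EQWS2F}, then $a-\tilde a\in S_{m\pi^{-N},\Lambda}(\Omega)$ for every $N$, so $a-\tilde a\in S^{-\infty}(\Omega)$ by \eqref{EQWS2D}. The main technical obstacle is the balance described in the second paragraph: unlike in the standard H\"ormander classes, the derivative gain $t_k^{-1}$ from $\chi(\xi/t_k)$ is not directly matched by a weight decay of the same order in $\Lambda$, so I have to use the strictly positive polynomial lower bound $\pi(\xi)\ge c\langle\xi\rangle^{c}$ of \eqref{PG} to convert the extra factor $\pi(\xi)^{-1}$ from $S_{m\pi^{-k}}$ into a power of $t_k^{-c}$ that overcomes the polynomial loss in $t_k$ inherited from $\Lambda(\xi)^{-\alpha}\le C\,t_k^{C|\alpha|}$ on the support.
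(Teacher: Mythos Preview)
Your overall strategy---Borel summation via excision plus the uniqueness argument through \eqref{EQWS2D}---is exactly what the paper invokes by referencing \cite[Proposition~1.1.6]{NR1} and \cite{BE1}. However, there is a genuine gap in your ``Choice of $t_k$'' step: the Euclidean cutoff $\chi(\xi/t_k)$ does not work in this calculus. When a $\xi$-derivative of multi-index $\gamma\neq 0$ falls on $\chi(\xi/t_k)$, you must absorb a factor $t_k^{-|\gamma|}\Lambda(\xi)^{\gamma}$ into the target estimate, and on the annulus $t_k/2\le|\xi|\le t_k$ this is of size $t_k^{(C-1)|\gamma|}$ by \eqref{PG}. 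Your single surplus factor $\pi(\xi)^{-1}\lesssim t_k^{-c}$ compensates this only when $(C-1)|\gamma|<c$, which fails already for $|\gamma|=1$ in the basic quasi-homogeneous example of \S\ref{EX}: with $\Lambda_M(\xi)=(\langle\xi\rangle_M,\langle\xi\rangle_M^{1/2})$ one has $C=2$, $c=1/2$, and at the point $\xi=(0,t_k)$ the quantity $t_k^{-1}\lambda_1(\xi)\pi(\xi)^{-1}\approx t_k^{-1}\cdot t_k^{2}\cdot t_k^{-1}=1$ does not go to zero, while for $\gamma=(2,0)$ it blows up like $t_k$. So the claim that the surplus ``beats any polynomial loss'' is false, and no choice of $t_k$ forces your displayed bound.

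The fix---and this is precisely the point of the Beals/H\"ormander machinery the paper cites---is to replace $\chi(\xi/t_k)$ by a cutoff adapted to the weight structure, for instance $\psi\bigl(\tilde\pi(\xi)/t_k\bigr)$, where $\tilde\pi$ is a smooth equivalent of $\pi$ satisfying \eqref{GLOBAL} (its existence is recalled just before \eqref{WSS}). Then $\partial^{\alpha}_\xi\bigl(\psi(\tilde\pi(\xi)/t_k)\bigr)$ is bounded by $C_\alpha\Lambda(\xi)^{-\alpha}$ \emph{uniformly in $t_k$}, because each derivative of $\tilde\pi$ already carries the correct $\lambda_j^{-1}$ decay and on the support of $\psi'$ one has $\tilde\pi(\xi)\approx t_k$. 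Hence $\psi(\tilde\pi/t_k)a_k\in S_{m\pi^{-k},\Lambda}(\Omega)$ with seminorms uniform in $t_k$, and on its support $\pi(\xi)\gtrsim t_k$, so the single surplus $\pi^{-1}$ now cleanly gives the factor $t_k^{-1}$ you need; the rest of your argument (convergence, the splitting for \eqref{EQWS2F}, uniqueness) then goes through unchanged.
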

\noindent
We write
\begin{equation}\label{EQWS2G}
a(x,\xi)\sim\sum_{k=0}^\infty a_k(x,\xi),
\end{equation}
if for every $N\geq 1$ \eqref{EQWS2F} holds.

The adjoint  $a(x,D)^\ast$ of the $m-$pseudodifferential operator $a(x,D)\in \,{\textup{Op}} S_{m, \Lambda}(\Omega)$ is defined by the identity:
\begin{equation}\label{ADJOINT}
(a(x,D) \varphi,\psi)=(\varphi, a(x,D)^\ast \psi), \quad \text{for any}\quad \varphi,\psi\in C^\infty_0(\Omega),
\end{equation}
where $(f,g)=\int f\bar g$.
\begin{prop}[Asymptotic expansion]\label{PROMM1}
Consider $a(x,D)\in\textup{Op}\,S_{m, \Lambda}(\Omega)$ and $b(x,D)\in\widetilde{\textup{Op}}\,S_{m', \Lambda}(\Omega)$, where $m(\xi), m'(\xi)$ are admissible weights, both associated to the same weight vector $\Lambda(\xi)$. Then we have:
\begin{itemize}
 \item [i)]$a(x, D)^\ast\in\,\textup{Op}\,S_{m, \Lambda}(\Omega)$  and $a(x, D)^\ast=a^\ast(x, D)$, where  $a^\ast(x,\xi)\in S_{m,\Lambda}(\Omega)$ satisfies the following aymptotic expansion:
\begin{equation}\label{EQMM2}
a^\ast(x,\xi)\sim\sum_{\alpha}\frac{1}{\alpha !}\partial^\alpha_\xi D^\alpha_x\bar a(x,\xi))\,,\qquad D^\alpha:=(-i)^{\vert\alpha\vert}\partial^\alpha\,.
\end{equation}
Moreover $a(x, D)^\ast\in\, \widetilde{\textup{Op}}\, S_{m,\Lambda}(\Omega)$ if $a(x,D)$ is assumed properly supported.
\item [ii)] $b(x,D)a(x,D)=c(x,D)\in\,\textup{Op}\,S_{mm',\Lambda}(\Omega) $  and
\begin{equation}\label{EQWS10}
c(x,\xi)\sim\sum_{\alpha}\frac{1}{\alpha !}\partial^\alpha_\xi b(x,\xi)D^\alpha_x a(x,\xi).
\end{equation}
\end{itemize}
\end{prop}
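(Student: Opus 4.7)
The overall plan is to reduce both assertions to the oscillatory-integral symbolic calculus of Beals--H\"ormander, of which the class $S_{m,\Lambda}(\Omega)$ is a local instance: the slowly varying conditions \eqref{SW}--\eqref{SW1} provide the natural metric $g_\xi=\sum_k d\xi_k^2/\lambda_k(\xi)^2$, while the temperance \eqref{TEMP} and the polynomial growth \eqref{PG} control the weights globally. The key numerical fact that lets me invoke Proposition \ref{PROPWS2} is $\Lambda(\xi)^{-\alpha}=\prod_k\lambda_k(\xi)^{-\alpha_k}\leq\pi(\xi)^{-|\alpha|}$, so each $\xi$-differentiation of a symbol produces a factor $\pi^{-1}$, giving the decay required to interpret the formal series as a genuine asymptotic expansion.

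\textbf{Proof of (i).} Starting from the definition \eqref{ADJOINT} and Fubini, I would write
\begin{equation*}
a^{\ast}(x,\xi)=(2\pi)^{-n}\iint e^{-i(y-x)\cdot(\eta-\xi)}\overline{a(y,\eta)}\,dy\,d\eta,
\end{equation*}
interpreted as an oscillatory integral, and then change variables $z=y-x$, $\zeta=\eta-\xi$ and Taylor-expand $\overline{a(x+z,\xi+\zeta)}$ in $z$ about $0$ to order $N$. Using $z^{\alpha}e^{-iz\cdot\zeta}=i^{|\alpha|}\partial_{\zeta}^{\alpha}e^{-iz\cdot\zeta}$ and integrating first by parts in $\zeta$ and then in $z$ (the latter producing a $\delta$ in $\zeta$), one identifies the $\alpha$-th term as $\frac{1}{\alpha!}\partial_{\xi}^{\alpha}D_{x}^{\alpha}\bar a(x,\xi)\in S_{m\pi^{-|\alpha|},\Lambda}(\Omega)$. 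The remainder integral is treated by standard integration-by-parts with the operators $\langle z\rangle^{-2M}(1-\Delta_{\zeta})^{M}$ and an $\Lambda(\xi)$-adapted counterpart, using \eqref{SW1} to replace $m(\xi+\zeta)$, $\lambda_j(\xi+\zeta)$ by $m(\xi)$, $\lambda_j(\xi)$ on the slowly varying region $\{\sum_{k}|\zeta_{k}|\lambda_{k}(\xi)^{-1}\leq c\}$ and \eqref{TEMP}, \eqref{PG} to dominate its complement; the outcome lies in $S_{m\pi^{-N},\Lambda}(\Omega)$. Proposition \ref{PROPWS2} synthesizes \eqref{EQMM2} as a true symbol in $S_{m,\Lambda}(\Omega)$, unique modulo $S^{-\infty}$. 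If $a(x,D)$ is properly supported, then so is $a^{\ast}(x,D)$, since their Schwartz kernels are conjugate transposes of one another, so the properly supported statement follows.

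\textbf{Proof of (ii).} The proper support of $b(x,D)$ ensures that $a(x,D)u\in C^{\infty}(\Omega)$ can be fed into $b(x,D)$ for $u\in C^{\infty}_{0}(\Omega)$, and permits writing the composition, modulo $S^{-\infty}$, through the amplitude
\begin{equation*}
c(x,\xi)=(2\pi)^{-n}\iint e^{-iz\cdot\eta}\,b(x,\xi+\eta)\,a(x+z,\xi)\,dz\,d\eta.
\end{equation*}
Taylor-expanding $b(x,\xi+\eta)$ in $\eta$ about $0$ and applying the same integration-by-parts identity as in (i) produces the terms $\frac{1}{\alpha!}\partial_{\xi}^{\alpha}b(x,\xi)D_{x}^{\alpha}a(x,\xi)$, which lie in $S_{mm'\pi^{-|\alpha|},\Lambda}(\Omega)$ because $\partial_{\xi}^{\alpha}b\in S_{m'\Lambda^{-\alpha},\Lambda}$ and $\Lambda^{-\alpha}\leq\pi^{-|\alpha|}$. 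The remainder is estimated by the same anisotropic integration-by-parts scheme as in (i), and Proposition \ref{PROPWS2} then produces a symbol $c(x,\xi)\in S_{mm',\Lambda}(\Omega)$ with the expansion \eqref{EQWS10}.

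\textbf{The main obstacle} lies in the remainder estimates. Because $\Lambda(\xi)$ is anisotropic and neither $\Lambda$ nor $m$ enjoys any homogeneity, the classical dyadic decomposition in $|\xi|$ is not available, and the correct localization scale around each $\xi$ is the $\Lambda(\xi)$-box $\{|\zeta_{k}|\leq c\lambda_{k}(\xi)\}$ singled out by \eqref{SW}--\eqref{SW1}. The delicate point is thus to choose the integration-by-parts operators adapted to this scale so that the slowly varying region gives the expected symbol order and the complementary region is absorbed by \eqref{TEMP}; once this is arranged the calculus follows the template of \cite[Prop.~1.1.6]{NR1} and \cite{BE1}.
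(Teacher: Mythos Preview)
Your proposal is correct and is precisely the standard oscillatory-integral argument that the paper invokes by reference: the paper does not supply its own proof but simply states that ``the proof directly follows from the arguments in \cite[\S 1.2.2]{NR1} and \cite[\S I.3]{TRE1}.'' Your sketch---oscillatory representation of $a^\ast$ and of the composition, Taylor expansion, the identification of each term via $z^\alpha e^{-iz\cdot\zeta}=i^{|\alpha|}\partial_\zeta^\alpha e^{-iz\cdot\zeta}$, and the anisotropic integration-by-parts remainder estimate based on \eqref{SW}--\eqref{SW1} and \eqref{TEMP}---is exactly what those references carry out, and your observation $\Lambda^{-\alpha}\le\pi^{-|\alpha|}$ is the mechanism by which the expansion fits the framework of Proposition~\ref{PROPWS2}.
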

\noindent The proof directly follows from the arguments in \cite[\S 1.2.2]{NR1}, and \cite [\S I.3] {TRE1}.

Considering $a(x,\xi)\in S_{m,\Lambda}(\Omega)$ and using \eqref{EQWS1}, \eqref{TEMP}, \eqref{PG}, \eqref{MA}, it immediately follows that for any $\alpha,\gamma \in\mathbb Z_+^n$, $K\subset\!\!\subset\Omega$,
    \begin{equation*}
    \sup\limits_{x\in K}\vert \xi^\gamma \partial^{\alpha+\gamma}_\xi a(x, \xi)\vert \leq M_{\alpha,\gamma,K} \langle\xi\rangle^{N-c|\alpha|}
 \end{equation*}
with some positive constants $M_{\alpha,\gamma,K}$, $N, c$. Then $S_{m,\Lambda}(\Omega)\subset M^N_{c, 0}(\Omega)$. Here $M^r_{\rho,0}(\Omega)$, $0<\rho\leq 1$, are the symbol classes defined in Taylor \cite  {TA1} given by all the symbols $a(x,\xi)\in S^r_{\rho,0}(\Omega)$ such that for any $\gamma\in\{0,1\}^n$, $\xi^\gamma\partial^\gamma_\xi a(x,\xi)\in S^r_{\rho, 0}(\Omega)$.
Then applying the arguments in \cite[Ch. XI, Prop. 4.5]{TA1}, see also \cite[Theorem 4.1]{GM1}, the next result immediately follows:

\begin{thm}[continuity]\label{CONT}
If $a(x,\xi)\in S_\Lambda(\Omega)$, then, for any $1<p<\infty$:
\begin{equation*}
a(x,D): L^p_{\rm comp}(\Omega)\mapsto L^p_{\rm loc }(\Omega) \quad \text{continuously}.
\end{equation*}
If $a(x,D)$  is assumed to be properly supported, then it is bounded both as operator into $L^p_{\rm comp}(\Omega)$ and into $L^p_{\rm loc}(\Omega)$.
\end{thm}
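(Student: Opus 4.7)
The plan is to invoke Taylor's $L^p$ boundedness theorem for the symbol class $M^0_{\rho,0}(\Omega)$, exactly as the paragraph preceding the statement anticipates. The key step is the inclusion $S_\Lambda(\Omega)\subset M^0_{c,0}(\Omega)$, where $c>0$ is the exponent from the polynomial growth condition \eqref{PG}. This is the zero-order specialization of the inclusion $S_{m,\Lambda}(\Omega)\subset M^N_{c,0}(\Omega)$ already derived above. To verify it I would apply Leibniz to $\xi^\gamma\partial_\xi^\gamma a(x,\xi)$ for $\gamma\in\{0,1\}^n$: the $M$-condition \eqref{MA} absorbs each monomial factor $\xi_j^{\gamma_j-\delta_j}$ into the corresponding weight $\lambda_j(\xi)^{\gamma_j-\delta_j}$, while polynomial growth \eqref{PG} converts any further $\Lambda$-weights into powers of $\langle\xi\rangle$. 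This yields the $S^0_{c,0}$-estimates required for membership in $M^0_{c,0}(\Omega)$, uniformly in $\gamma\in\{0,1\}^n$.

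Next I would transfer Taylor's global $L^p$ boundedness on $\mathbb R^n$ to the local setting by a standard cutoff argument. Given $u\in L^p_{\rm comp}(\Omega)$ and a target compact $K'\subset\Omega$, pick $\phi,\psi\in C^\infty_0(\Omega)$ with $\phi\equiv 1$ on $\text{supp}\,u$ and $\psi\equiv 1$ on $K'$. The symbol $\psi(x)a(x,\xi)$ has compact $x$-support, extends by zero to a symbol in $M^0_{c,0}(\mathbb R^n)$ whose seminorms are controlled by the local seminorms of $a$ over $\text{supp}\,\psi$, and satisfies $\psi a(x,D)(\phi u)=\psi a(x,D)u$ on $K'$. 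Applying Taylor's theorem to this cutoff symbol yields $\|a(x,D)u\|_{L^p(K')}\le C_{\phi,\psi}\|u\|_{L^p(\mathbb R^n)}$, which is the continuity $L^p_{\rm comp}(\Omega)\to L^p_{\rm loc}(\Omega)$.

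For the properly supported case I would invoke Remark \ref{REMWS2}.(6): for every $\psi\in C^\infty_0(\Omega)$ there exists $\phi\in C^\infty_0(\Omega)$ such that $\psi a(x,D)u=\psi a(x,D)\phi u$ for all $u\in\mathcal D'(\Omega)$. This reduces boundedness on $L^p_{\rm loc}(\Omega)$ to the compactly-supported case just treated. Boundedness on $L^p_{\rm comp}(\Omega)$ follows because proper support confines $\text{supp}\,a(x,D)u$ to a compact set depending only on $\text{supp}\,u$, so one may choose $\psi\equiv 1$ on that set and apply the previous estimate.

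I expect no serious obstacle here: the argument is essentially a reduction to a black-box result. The only delicate point is ensuring that multiplying $a(x,\xi)$ by a cutoff $\psi(x)\in C^\infty_0(\Omega)$ produces a symbol in the \emph{global} Taylor class on $\mathbb R^n\times\mathbb R^n$, with seminorms controlled by the local seminorms of $a$; this is routine, as all $x$-derivatives remain supported in $\text{supp}\,\psi$ and the $\xi$-estimates are preserved verbatim.
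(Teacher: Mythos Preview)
Your proposal is correct and follows exactly the paper's approach: the paper establishes the inclusion $S_{m,\Lambda}(\Omega)\subset M^N_{c,0}(\Omega)$ in the paragraph immediately preceding the theorem and then simply cites \cite[Ch.~XI, Prop.~4.5]{TA1} and \cite[Theorem~4.1]{GM1} for the $L^p$ boundedness, without spelling out the cutoff passage from global to local. Your write-up actually supplies more detail (the $\psi,\phi$ cutoff argument and the properly supported reduction via Remark~\ref{REMWS2}(6)) than the paper does, but the strategy is identical.
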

Let $ p(\xi)$ be a smooth function  satisfying for every $\alpha \in \mathbb Z^n_+$ the estimate
\begin{equation}\label{GLOBAL}
\vert\partial^\alpha p(\xi)\vert < c_{\alpha}m(\xi) \Lambda(\xi)^{-\alpha}, \quad \text{for some }\,\, c_\alpha>0.
\end{equation}
Then, by means of  \eqref{eqINT1}, $p(D)u:= (2\pi)^{-n}\int e^{i x\cdot \xi}p(\xi)\hat u(\xi)=\mathcal F^{-1}(p\hat u)$  defines a  Fourier multiplier which maps the Schwartz space $S(\mathbb R^n)$ to itself, and extends to a bounded map $S'(\mathbb R^n)\mapsto S'(\mathbb R^n)$.\\
It may be proved by means of  technical arguments, see  \cite[\S 2, Prop. 3]{GM2}, that for any admissible weight $m(\xi)$ there exists a smooth equivalent weight $\tilde m(\xi)$ whose derivatives satisfy the estimates  in \eqref{GLOBAL}.\\
Identifying now $m(\xi)$ and $\tilde m(\xi)$, we can define for $1<p<\infty$ the weighted Sobolev space:
\begin{equation}\label{WSS}
H^p_m:=\left\{u\in \mathcal S'(\mathbb R^n),\,\, \text{such that}\,\,\, m(D)u\in L^p(\mathbb R^n)\right\}.
\end{equation}
$H^p_m$ may be equipped in natural way by the norm $\Vert u \Vert_{p,m}:=\Vert m(D)u\Vert_{L^p}$ and it  realizes to be a Banach space (Hilbert space in the case $p=2$, with inner product $(u,v)_m=\left( m(D)u, m(D)v \right)_{L^2}$ ).\\
Using standard arguments it can be proved that  $ \mathcal S(\mathbb R^n)\subset H^p_m\subset \mathcal S'(\mathbb R^n)$, with continuous embeddings and moreover $\mathcal S(\mathbb R^n)$ is dense in $H^p_{m}$, $1<p<\infty$.\\
For any open subset $\Omega \subset \mathbb R^n$ the following local  spaces may be introduced:
\begin{eqnarray}
&&H^p_{m, {\rm loc}}(\Omega)=\left\{u\in \mathcal D'(\Omega)\,\,\text{such that, for any}\, \varphi\in C^\infty_0(\Omega),\, \varphi u\in H^p_m  \right\}.\label{EQSS4}\\
&&H^p_{m,{\rm comp}}(\Omega)=\bigcup_{K\subset\!\!\subset \Omega}H^p_m(K), \label{EQSS3}
\end{eqnarray}
where $H^p_m(K)$ is the closed subspace of $H^p_m$, consisting of the distributions supported in the compact set $K$.\\
$H^p_{m,{\rm loc}}(\Omega)$ equipped with the family of seminorms $p_{\psi} (\cdot):=\Vert \psi \cdot\Vert_{m,p}=\Vert m(D)\psi\cdot\Vert_{L^p}$, $\psi\in C^\infty_0(\Omega)$ arbitrary,  is a Fr\'echet space.\\
$H^p_{m, \textup{comp}}(\Omega)$ is provided with the inductive limit topology of the spaces $H^p_m(K)$, for $K$ ranging on the collection of all compact subsets of $\Omega$.\\
For any $1<p<\infty$ we have the following embeddings with dense inclusion:
\begin{equation*}\label{DENSITY}
C^\infty_0(\Omega)\hookrightarrow H^p_{m, \textup{comp}}(\Omega)\hookrightarrow H^p_{m,\textup{loc}}(\Omega)\hookrightarrow \mathcal D'(\Omega).
\end{equation*}
Considering now $m'(\xi)\geq cm(\xi)$, thanks to Theorem \ref{CONT}, we have for any $\psi\in C^\infty_0(\Omega)$ and $u\in C^\infty(\Omega)$, 
\begin{equation*}\Vert m(D)(\psi u)\Vert_{L^p}= \Vert \frac{m(D)}{m'(D)}m'(D)(\psi u)\Vert_{L^p}\leq C\Vert m'(D)(\psi u)\Vert_{L^p}.
\end{equation*} 
The next local Sobolev embedding then immediately follows:
\begin{equation}\label{SOBEMB}
H^p_{m',{\rm loc}}(\Omega)\hookrightarrow H^p_{m,{\rm loc}}(\Omega), \quad \text {when}\,\, m'(\xi)\geq c\, m(\xi), \,\, \text{for some}\,\, c>0.
\end{equation}
\begin{prop}\label{SOBOLEVCONT}
Assume $m, m'$ admissible weights,  $a(x,\xi)\in S_{m',\Lambda}(\Omega)$ and  $p\in]1,\infty[$. Then $a(x,D)$ extends to a bounded linear operator:
\begin{eqnarray}
&&a(x,D):H^p_{m, {\rm comp}}(\Omega)\mapsto H^p_{m/m', {\rm loc}} (\Omega).\label{EQSS6}
\end{eqnarray}
If moreover $a(x,D)$ is a properly supported operator  then the following maps are continuous:
\begin{eqnarray}
&&a(x,D):H^p_{m, {\rm comp}}(\Omega)\mapsto H^p_{m/m',{\rm comp}}(\Omega);\label{EQSS7}\\
&&a(x,D):H^p_{m,{\rm loc}} (\Omega)\mapsto H^p_{m/m',{\rm loc}}(\Omega).\label{EQSS8}
\end{eqnarray}
\end{prop}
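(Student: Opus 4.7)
The plan is to reduce the Sobolev-space mapping property of $a(x,D)$ to an application of Theorem \ref{CONT} to a suitable zero-order $m$-pseudodifferential operator built from $a(x,D)$ and the admissible-weight Fourier multipliers $m(D)^{\pm1}$ and $(m/m')(D)$ (admissibility of $1/m$ and $m/m'$ is supplied by Proposition \ref{PROWS1}(ii)). Formally, the conjugation $(m/m')(D)\circ a(x,D)\circ m(D)^{-1}$ has principal symbol $a(x,\xi)/m'(\xi)\in S_\Lambda(\Omega)$, which is zero order, so its $L^p$-continuity follows from Theorem \ref{CONT}, and this is the quantitative backbone of the argument.

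The core technical step I would carry out is to prove the following estimate, from which all three assertions will follow: for every $\psi\in C^\infty_0(\Omega)$ there is $C_\psi>0$ such that for all $v\in H^p_{m,\textup{comp}}(\Omega)$,
\[
\|\psi\,a(x,D)v\|_{H^p_{m/m'}}\le C_\psi\|v\|_{H^p_m}.
\]
Setting $f=m(D)v\in L^p(\mathbb R^n)$, this is equivalent to the $L^p$-boundedness of
\[
T:=(m/m')(D)\circ M_\psi\circ a(x,D)\circ m(D)^{-1},
\]
with $M_\psi$ denoting multiplication by $\psi$. After replacing $a(x,D)$ by a properly supported representative $a'(x,D)$ via Remark \ref{REMWS2}(6), the middle block $M_\psi\circ a'(x,D)$ is itself properly supported with exact symbol $\psi(x)a'(x,\xi)\in S_{m',\Lambda}(\Omega)$, compactly supported in $x$. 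Iterating Proposition \ref{PROMM1}(ii) to compose with the two Fourier multipliers — whose symbols depend only on $\xi$, so every $D^\alpha_x$ in \eqref{EQWS10} with $|\alpha|\ge1$ kills them and the asymptotic expansion collapses modulo $S^{-\infty}(\Omega)$ — identifies $T$ with an element of $\textup{Op}\,S_\Lambda(\Omega)$, whose $L^p$-continuity is then exactly Theorem \ref{CONT}.

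The three conclusions then follow easily. Estimate \eqref{EQSS6} is the key estimate read across $\psi\in C^\infty_0(\Omega)$; \eqref{EQSS7} uses in addition that $a(x,D)u$ has compact support whenever $u$ does, thanks to proper support; and \eqref{EQSS8} is obtained from \eqref{EQSS6} via the standard device of Remark \ref{REMWS2}(6): for $u\in H^p_{m,\textup{loc}}(\Omega)$ pick $\varphi\in C^\infty_0(\Omega)$ with $\psi\,a(x,D)u=\psi\,a(x,D)(\varphi u)$ and apply the key estimate to $v=\varphi u\in H^p_{m,\textup{comp}}(\Omega)$.

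The main obstacle is the rigorous justification of the composition defining $T$: Proposition \ref{PROMM1}(ii) requires the \emph{left} factor to be properly supported, yet the Fourier multipliers $m(D)^{\pm1}$ and $(m/m')(D)$ are manifestly not. To bypass this I would, once more invoking Remark \ref{REMWS2}(6), replace each multiplier by a properly supported $m$-pseudodifferential operator sharing the same symbol modulo $S^{-\infty}(\Omega)$, perform the composition entirely within the properly supported calculus, and then verify that the resulting $L^p$ bound is equivalent to the desired $H^p_m\to H^p_{m/m'}$ estimate — the two versions of $m(D)$ differ by a smoothing operator, which maps any weighted Sobolev space continuously into any other.
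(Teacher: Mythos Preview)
Your proposal is correct and follows essentially the same strategy as the paper: reduce to the $L^p$-boundedness of the zero-order composite $(m/m')(D)\circ M_\psi\circ a(x,D)\circ m(D)^{-1}$ and invoke Theorem~\ref{CONT}, then read off \eqref{EQSS6}--\eqref{EQSS8} exactly as you describe. The one noteworthy difference is in how your ``main obstacle'' is dispatched: rather than replacing the Fourier multipliers by properly supported representatives and tracking smoothing errors, the paper simply inserts a cutoff $\phi\in C^\infty_0(\Omega)$ with $\phi\equiv 1$ on $K$ (for $u\in H^p_m(K)$), so that the middle block $\psi\,a(x,D)\,\phi$ has symbol compactly supported in $x$ and the full composite $(m/m')(D)\,\psi\,a(x,D)\,\phi\,(1/m)(D)$ is a well-defined global operator $\mathcal S'(\mathbb R^n)\to\mathcal S'(\mathbb R^n)$ with symbol in $S_\Lambda$---this bypasses the proper-support hypothesis of Proposition~\ref{PROMM1}(ii) without any modulo-$S^{-\infty}$ bookkeeping.
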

\begin{proof} Thanks to the topology of $H^p_{m,\textup{comp}}(\Omega)$, we need that for every $K\subset\!\!\subset \Omega$ $a(x,D)$ maps continuously $H^p_{m}(K)$ to $H^p_{m/m', \textup{loc}}(\Omega)$. To such purpose consider  $\phi\in C^\infty_0(\Omega)$, such that $\phi(\xi)=1$ in $K$, then for any $\psi\in C^\infty_0(\Omega)$ we have:
\begin{equation}\label{EQWS10BIS}
\begin{array}{l}
\Vert \psi a(x,D)u\Vert_{H^p_{m/m'}}=\Vert \psi a(x,D)(\phi u)\Vert_{H^p_{m/m'}}=\\
\Vert \frac{m}{m'}(D)\psi a(x,D)\phi \frac{m}{m}(D) u\Vert_{L^p}=\Vert \frac{m}{m'}(D)\psi a(x,D)\phi \frac{1}{m}(D)m(D) u\Vert_{L^p}\,.
\end{array}
\end{equation}
Since $\frac{m}{m'}(D)\psi a(x,D)\phi \frac{1}{m}(D)$ admits symbol in $S_{\Lambda}(\Omega)$ and moreover it extends to an operator from  $\mathcal S'(\mathbb R^n)$ to itself, we obtain that $\Vert \psi a(x,D)u\Vert_{H^p_{m/m'}}\leq C \Vert u\Vert_{H_m^p}$, which shows \eqref{EQSS6}.\\
Considering now $a(x,D)$ properly supported,  the operator $a(x,D)\phi\frac{1}{m}(D)m(D)$ maps $\mathcal E'(\Omega)$ into itself, thus \eqref{EQSS7} directly follows from \eqref{EQWS10BIS}.\\
Since $a(x,D)$ is properly supported, for any $\psi\in C^\infty_0(\Omega)$ we can find another test function $\phi\in C^\infty_0(\Omega)$, such that $\psi a(x,D)u=\psi a(x,D) \phi u$, thus \eqref{EQSS8} directly follows by the calculus in \eqref{EQWS10BIS}.
\end{proof}
\begin{defn}\label{ELLIPT} We say that a symbol $a(x,\xi)\in S_{m, \Lambda}(\Omega)$, or equivalentely the operator $a(x,D)$, is $m$-elliptic if for every $K\subset\!\!\subset\Omega$, two positive constants $c_K, C_K$ exist such that
\begin{equation}\label{eqELLIPT1}
\vert a(x,\xi)\vert>c_k m(\xi), \quad \text {for any}\quad x\in K\quad\text{and}\quad \vert \xi\vert> C_K.
\end{equation}
\end{defn}

\begin{prop}[parametrix]\label{PROPSS2}
Let $a(x,\xi)\in S_{m,\Lambda}(\Omega)$ be a $m-$elliptic symbol. Then a properly supported  operator $b(x,D)\in \widetilde{\textup{Op}}S_{1/m, \Lambda}(\Omega)$ exists such that:
\begin{equation}\label{EQSS12}
b(x,D)a(x,D)=\textup{Id}+\rho(x,D),
\end{equation}
where $\rho(x,\xi)\in S^{-\infty}(\Omega)$ and ${\rm Id}$ denotes the identity operator.
\end{prop}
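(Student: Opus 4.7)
The plan is to implement the classical parametrix construction adapted to the $m$-weighted setting: produce a first approximate inverse $b_0$ by inverting the symbol away from a bounded set in $\xi$, iteratively correct with lower-order terms, and then sum asymptotically.

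First, I would construct $b_0 \in S_{1/m, \Lambda}(\Omega)$ agreeing with $1/a(x,\xi)$ at infinity. For any compact $K\subset\!\!\subset\Omega$, the $m$-ellipticity \eqref{eqELLIPT1} yields $c_K, C_K > 0$ with $|a(x,\xi)| \ge c_K m(\xi)$ on $K \times \{|\xi| > C_K\}$. Fixing $\chi \in C^\infty(\mathbb{R}^n)$ with $\chi \equiv 0$ on $|\xi| \le 1$ and $\chi \equiv 1$ on $|\xi| \ge 2$, I would patch the local quotients $\chi(\xi/C_K)/a(x,\xi)$ by a partition of unity subordinate to a compact exhaustion of $\Omega$, noting that different choices of patching differ by an $S^{-\infty}$ term. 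The technical step is to verify $b_0 \in S_{1/m, \Lambda}(\Omega)$: by Fa\aa\ di Bruno's formula, $\partial_\xi^\alpha \partial_x^\beta(1/a)$ expands as a linear combination of products $a^{-(\ell+1)} \prod_j \partial_\xi^{\alpha^j} \partial_x^{\beta^j} a$ with $\ell$ factors, $\sum_j \alpha^j = \alpha$ and $\sum_j \beta^j = \beta$; each such product is bounded on $K \times \{|\xi| > C_K\}$ by $C\, m^{-(\ell+1)} m^\ell \Lambda^{-\alpha} = C\, m^{-1} \Lambda^{-\alpha}$, using \eqref{eqELLIPT1} and \eqref{EQWS1} for $a$, which is exactly the estimate required for membership in $S_{1/m, \Lambda}(\Omega)$.

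Next, I would replace $b_0$ by a properly supported representative (Remark \ref{REMWS2}(6)) and apply the composition formula \eqref{EQWS10} from Proposition \ref{PROMM1}(ii) to $b_0(x,D)\, a(x,D) \in \textup{Op}\, S_\Lambda(\Omega)$. Its symbol has expansion $\sum_\alpha \frac{1}{\alpha!} \partial_\xi^\alpha b_0\, D_x^\alpha a$: the $\alpha = 0$ term equals $1$ modulo a compactly supported (hence $S^{-\infty}$) function, while each $|\alpha| \ge 1$ term is controlled pointwise by $C\Lambda^{-\alpha} \le C \pi^{-|\alpha|}$, with analogous bounds for further derivatives, placing it in $S_{\pi^{-|\alpha|}, \Lambda}(\Omega)$ via \eqref{EQWS0}. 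Consequently
\[
b_0(x,D)\, a(x,D) = \textup{Id} - r_1(x,D), \qquad r_1 \in S_{\pi^{-1}, \Lambda}(\Omega).
\]
Assuming inductively $(b_0 + \cdots + b_N)(x,D)\, a(x,D) = \textup{Id} - r_{N+1}(x,D)$ with $r_{N+1} \in S_{\pi^{-(N+1)}, \Lambda}(\Omega)$ and each $b_k$ properly supported in $S_{\pi^{-k}/m, \Lambda}(\Omega)$, I would set $b_{N+1} := b_0\, r_{N+1} \in S_{\pi^{-(N+1)}/m, \Lambda}(\Omega)$. A second application of \eqref{EQWS10}, combined with $b_0 a \equiv 1$ modulo $S^{-\infty}$, yields $b_{N+1}(x,D)\, a(x,D) = r_{N+1}(x,D) + r'_{N+2}(x,D)$ with $r'_{N+2} \in S_{\pi^{-(N+2)}, \Lambda}(\Omega)$, completing the inductive step.

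Finally, Proposition \ref{PROPWS2} applied to $\{b_k\}_{k\ge 0}$ produces $b(x,\xi) \sim \sum_k b_k(x,\xi) \in S_{1/m, \Lambda}(\Omega)$; passing once more to a properly supported representative in $\widetilde{\textup{Op}}\, S_{1/m, \Lambda}(\Omega)$ via Remark \ref{REMWS2}(6) and invoking the construction, the symbol of $b(x,D)\, a(x,D) - \textup{Id}$ lies in $S_{\pi^{-N}, \Lambda}(\Omega)$ for every $N$, hence in $S^{-\infty}(\Omega)$ by \eqref{EQWS2D}. The main obstacle I anticipate is the bookkeeping in the first step: since the ellipticity constant $C_K$ depends on $K$, some care is required to glue the local quotients into a single smooth symbol whose $S_{1/m, \Lambda}$-seminorms are controlled uniformly on each compact. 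Once this is in place, the remainder of the argument is a routine application of the composition and asymptotic-summation machinery already established.
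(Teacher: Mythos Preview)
Your construction is correct and is the standard parametrix argument one expects in this setting. The paper does not supply its own proof of this proposition but simply refers the reader to \cite[Theorem 1.3.6]{NR1}; what you have written is precisely the classical recursive construction carried out in that reference, adapted to the $S_{m,\Lambda}$ calculus through Propositions \ref{PROPWS2} and \ref{PROMM1} and the identification \eqref{EQWS2D}. The gluing issue you flag (the dependence of $C_K$ on the compact set) is real but routine, handled exactly as you indicate by a locally finite partition of unity over a compact exhaustion of $\Omega$.
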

See \cite[ Theorem 1.3.6]{NR1} for the proof.


\section{Minimal and maximal $m$-pseudodifferential operators}\label{MM}
As introduction to this section let us recall some basic facts about duality in a generical  locally convex topological vector space $X$, for details and proofs the reader can refer to \cite[Ch. VII]{YO1}\\
We say strong dual of $X$ the space $X'$  of linear continuous functions from $X$ to the complex field $\mathbb C$ and write $\langle x, x' \rangle=x'(x)$, $x\in X$, $x'\in X'$, endowed with the strong topology defined by the seminorms $p_B(x')=\sup_{x\in B}\vert \langle x,x'\rangle\vert$, for $B$ arbitrary bounded subset of $X$.\\
For $X, Y$ locally convex topological vector spaces, take a linear operator $T:X\mapsto Y$, defined in a linear subspace  $D(T)\subset X$  , and consider a couple $(x', y')\in X'\times Y'$ which satisfies:
\begin{equation}\label{DUAL1}
\langle Tx, y'\rangle=\langle x, x'\rangle, \quad\text{for any}\,\,\, x\in D(T).
\end{equation}
Then $x'$ is uniquely determined by $y'$ if and only if $D(T)$ is dense in $X$, \cite[Ch. VII, Theorem 1]{YO1}.\\
Thus for any linear operator $T:X\mapsto Y$, with domain $D(T)$ dense in $X$, we can define  the \emph{dual operator} $T': Y'\mapsto X$, with domain $D(T')$ given by all the $y'\in Y'$ such that \eqref{DUAL1} is satisfied for some $x'\in X'$. $T'$ is  univocally defined by setting $T'y'=x'$. It moreover follows directly from \eqref{DUAL1}:
\begin{equation}\label{DUAL2}
\langle Tx, y'\rangle=\langle x, T'y'\rangle,\quad \text{for any}\quad x\in D(T), \, y'\in D(T').
\end{equation}
Recall now  that a  linear operator $T:\mathcal D(T)\mapsto Y$, $D(T)$ linear subspace of  $X$, is said to be closed when its graph $G(T)=\left\{(x, Tx),; x\in D(T)\right\}$ is a  closed linear subspace of $X\times Y$. $T$ is closable if the closure of $G(T)$ in $X\times Y$ is the graph of a linear operator mapping $X$ to $Y$.\\
If $X, Y$ are Fr\'echet spaces, or more generally   quasi normed spaces, then $T$ is closed (closable) iff, for every sequence $\{x_n\} \subset D(T)$,
\begin{eqnarray}
 \lim_{n\to\infty} x_n=x, \lim_{n\to\infty}Tx_n=y,\, x\in X, y\in Y\,\, \Rightarrow x\in D(T),\,\,Tx=y\label{EQMM1}\\
\left( \lim_{n\to\infty} x_n=0\, \,\text{and}\,\, \lim_{n\to\infty}Tx_n=y\,\,\Rightarrow\,\, y=0\right)\label{EQMM2BIS}.
\end{eqnarray}
Consider a closable linear operator $T:X\mapsto Y$, $X, Y$ Fr\'echet spaces; we can define in a unique way the linear closed operator  $T_0$ as follows: $x\in D(T_0)$ iff there exists  a sequence $\{x_n\}\subset D(T)$ such that $\lim_{n\to \infty }x_n=x$ and $\lim_{n\to \infty}T x_n=y\in Y$ exists, we set then $T_0 x=y$. $T_0$ is called the {\it smallest closed extension of T} in $X$. For more details the reader can see \cite[Ch II, \S 6]{YO1}.\\
Lacking for an adequate reference in Fr\'echet spaces, we prove the following
\begin{prop}\label{PROMM4}
Consider $X, Y$  Fr\'echet spaces, $X', Y'$ their strong dual spaces, $T:X\mapsto Y$ linear operator with domain $D(T)$ dense in $X$ and $T'$ the dual operator of $T$. Then
\begin{itemize}
\item[i)] $T'$ is a closed linear operator from $D(T')\subset Y'$ to $X'$.
\item[ii)] If $W$ is an extension of $T$, then $T'$ is an extension of $W'$.
\end{itemize}
\end{prop}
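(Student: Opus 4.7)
The plan is to verify both items directly from the defining identity \eqref{DUAL1}–\eqref{DUAL2}, exploiting the sequential characterization of closedness \eqref{EQMM1} that is available in Fr\'echet spaces (and, by duality theory, also in their strong duals, which are complete locally convex spaces).

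\textbf{Proof of (i).} I would first observe that linearity of $T'$ on $D(T')$ is immediate from the linearity of the pairing. For closedness, take an arbitrary sequence $\{y'_n\}\subset D(T')$ such that $y'_n\to y'$ in $Y'$ and $T'y'_n\to x'$ in $X'$, in the strong dual topologies. The key elementary remark is that strong convergence of a sequence of continuous linear functionals implies, in particular, pointwise convergence, since every singleton $\{y\}\subset Y$ (resp. $\{x\}\subset X$) is a bounded subset. Hence for every fixed $x\in D(T)$ one has
\begin{equation*}
\langle Tx, y'_n\rangle \longrightarrow \langle Tx, y'\rangle,\qquad \langle x, T'y'_n\rangle\longrightarrow \langle x, x'\rangle.
\end{equation*}
Applying the defining relation $\langle Tx,y'_n\rangle=\langle x,T'y'_n\rangle$ valid for each $n$ and passing to the limit yields $\langle Tx,y'\rangle=\langle x,x'\rangle$ for all $x\in D(T)$. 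This is precisely the condition identifying $y'$ as an element of $D(T')$ with $T'y'=x'$; uniqueness of $x'$ follows from the density of $D(T)$ in $X$. By \eqref{EQMM1}, $T'$ is closed.

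\textbf{Proof of (ii).} I would first note that $W$ being an extension of $T$ means $D(T)\subset D(W)$ and $W|_{D(T)}=T$; hence $D(W)$ contains the dense subspace $D(T)$, so it is itself dense in $X$, and the dual operator $W'$ is well defined. Now fix $y'\in D(W')$ arbitrarily. By definition there exists $\tilde x'\in X'$ such that $\langle Wx,y'\rangle=\langle x,\tilde x'\rangle$ for every $x\in D(W)$, with $W'y'=\tilde x'$. Restricting this identity to $x\in D(T)\subset D(W)$ and using $Wx=Tx$ there gives $\langle Tx,y'\rangle=\langle x,\tilde x'\rangle$ for all $x\in D(T)$, which by the definition of $T'$ means $y'\in D(T')$ and $T'y'=\tilde x'=W'y'$. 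Hence $D(W')\subset D(T')$ and $T'$ coincides with $W'$ on $D(W')$, i.e. $T'$ extends $W'$.

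I do not foresee a substantive obstacle: the statement is essentially a formal consequence of the definition of the dual operator together with the bounded-set formulation of the strong topology. The only point that requires slight care is making sure that convergence in the strong dual topology of $Y'$ indeed implies convergence of the evaluation at every fixed point of $Y$, which is immediate from the fact that singletons are bounded; everything else is bookkeeping with the identity \eqref{DUAL2}.
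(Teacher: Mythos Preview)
Your argument is correct and follows the same route as the paper's own proof: both parts are obtained directly from the duality identity \eqref{DUAL2} by passing to the limit along a sequence in (i) and by restricting from $D(W)$ to $D(T)$ in (ii). Your write-up is in fact a bit more careful, making explicit why strong convergence in $Y'$ and $X'$ entails pointwise convergence and why $D(W)$ is dense; the only minor caveat is that the sequential criterion \eqref{EQMM1} was stated for Fr\'echet spaces while $X',Y'$ need not be metrizable, but the same computation goes through verbatim with nets (and the paper's proof uses sequences in the same way).
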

\begin{proof}
Assume that $y'_k\to y'$ in $Y'$ and $T'y'_k\to x'$ in $X'$. By means of \eqref{DUAL2} we have, for  $k=1,2,\dots$, $\langle Tx, y'_k\rangle=\langle x, T'y'_k  \rangle$, for any $x\in D(T)$. Letting now $k\to \infty$ we obtain,  for any $x\in D(T)$, $\langle Tx, y'\rangle=\langle x,x'\rangle$. It follows that $y'\in D(T')$ and $x'=T'y'$, which proves i).\\
Take now $y'\in D(W')$ and assume that, for some $x'\in X'$ we have $\langle Wx, y' \rangle=\langle x, x'\rangle$, for any $x\in D(W)$. Since $W$ is an extension of $T$, it follows that $\langle Tx, y' \rangle=\langle x, x'\rangle$, again for any $x\in D(T)$. Thus $y'\in D(T')$ and $T' y'=x'=W' y'$ and the proof is concluded.
 \end{proof}

Hereafter, assuming that $m(\xi)\geq c>0$,  we consider the properly supported $m-$pseudodifferential operators noted by $T_a=a(x,D)\in\widetilde{\textup{Op}}S_{m,\Lambda}(\Omega)$ as linear operators from $L^p_{\rm loc}(\Omega)$ to itself with dense domain $D(T_a)=C^\infty_0(\Omega)$.
\begin{prop}\label{PROMM2}  $T_a$ is a closable operator on $L^p_{\rm loc}(\Omega)$, for any $a(x,\xi)\in S_{m,\Lambda}(\Omega)$, $m(\xi)\geq c>0$ and $1<p<\infty$.
\end{prop}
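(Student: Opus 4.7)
The plan is to verify the Fr\'echet-space criterion for closability recorded in \eqref{EQMM2BIS}: I must show that whenever $\{u_n\}\subset C^\infty_0(\Omega)$ satisfies $u_n\to 0$ in $L^p_{\rm loc}(\Omega)$ and $T_a u_n\to v$ in $L^p_{\rm loc}(\Omega)$, then necessarily $v=0$. The natural tool is duality against $C^\infty_0(\Omega)$ via the formal adjoint, exploiting that $T_a$ is properly supported.

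The first step is to invoke Proposition \ref{PROMM1}(i): since $T_a=a(x,D)$ is properly supported with symbol in $S_{m,\Lambda}(\Omega)$, the formal adjoint $T_a^\ast=a(x,D)^\ast$ belongs to $\widetilde{\textup{Op}}\,S_{m,\Lambda}(\Omega)$. In particular it is properly supported, so for every $\varphi\in C^\infty_0(\Omega)$ one has $T_a^\ast\varphi\in C^\infty_0(\Omega)$; moreover the identity \eqref{ADJOINT}, namely $(T_a u_n,\varphi)=(u_n,T_a^\ast\varphi)$, holds for each $n$.

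Next I would fix an arbitrary $\varphi\in C^\infty_0(\Omega)$ and pass to the limit in both sides of this identity. On the left, set $K=\operatorname{supp}\varphi\subset\!\!\subset\Omega$; since $T_a u_n\to v$ in $L^p(K)$ by the definition of $L^p_{\rm loc}$ convergence and $\bar\varphi\in L^{p'}(K)$, H\"older's inequality yields $(T_a u_n,\varphi)\to (v,\varphi)$. On the right, put $K'=\operatorname{supp}(T_a^\ast\varphi)\subset\!\!\subset\Omega$; because $u_n\to 0$ in $L^p(K')$ and $\overline{T_a^\ast\varphi}\in L^{p'}(K')$, again H\"older gives $(u_n,T_a^\ast\varphi)\to 0$. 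Hence $(v,\varphi)=0$ for every $\varphi\in C^\infty_0(\Omega)$, which forces $v=0$ in $L^p_{\rm loc}(\Omega)$.

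There is essentially no serious obstacle here: the whole argument is a transposition trick that works because the adjoint produced by Proposition \ref{PROMM1}(i) is again properly supported and therefore sends test functions to test functions, which is exactly what is required to make both limits trivial. The only point deserving care is the bookkeeping of compact supports on the two sides (the supports of $\varphi$ and of $T_a^\ast\varphi$), to ensure that the $L^p_{\rm loc}$-convergence hypotheses can be converted into genuine $L^p$-convergence on a single compact set before pairing with an $L^{p'}$ function.
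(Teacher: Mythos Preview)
Your proof is correct and follows essentially the same approach as the paper: both use the adjoint identity $(T_a u_n,\varphi)=(u_n,T_a^\ast\varphi)$ together with H\"older's inequality to show that the limit pairs to zero against every test function. Your version is in fact slightly more explicit in tracking the compact supports on each side, but the argument is the same.
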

\begin{proof}
Recall that $\widetilde{\textup{Op}}S_{m,\Lambda}(\Omega)$ is continuous into $C^\infty_0(\Omega)$  and consider a sequence $f_k\in C^\infty_0(\Omega)$ such that $f_k\stackrel{k\to\infty}{\rightarrow} 0$ and  $T_a f_k\stackrel{k\to\infty}{\rightarrow} g$, both in $L^p_{\rm loc}(\Omega)$. For fixed $k$,  taking any $\psi\in C^\infty_0(\Omega)$, we have:
\begin{equation}\label{EQMM3}
\vert\left(T_a f_k, \psi \right)\vert=\vert\left(f_k, T^\ast_a \psi\right)\vert\leq \Vert f_k\Vert_{L^p}\Vert T^\ast_a  \psi\Vert_{L^{p'}}, \quad \frac{1}{p}+\frac{1}{p'}=1.
\end{equation}
Notice  that $\Vert f_k\Vert_{L^p}\rightarrow 0$, $\left( T_af_k, \psi\right)\rightarrow(g,\psi)$ in $\mathbb C$ and $\psi$ ranges all over $C^\infty_0(\Omega)$. Thus we can end that $g(x)=0$ almost everywhere, then $g=0$ in   $L^p_{\rm loc}(\Omega)$. Since $C^\infty_0(\Omega)$ is dense in $L^p_{\rm loc}(\Omega)$, the proof is concluded.
\end{proof}
We can then consider the  smallest closed extension $T_{a, 0}$ of $T_a$ and call it \textit{minimal extension of} $T_a$ \\
Define now the operator $T_{a,1}$ as follows.
\begin{defn}\label{DEFMAXIMAL} For  \,$T_a\in \,\widetilde{\textup{Op}}\, S_{m,\Lambda}(\Omega)$ , $u\in L^p_{\rm loc}(\Omega)$ belongs to the domain $D(T_{a,1})$ if there exists $f\in L^p_{\rm loc}(\Omega)$ such that
\begin{equation}\label{MAXIMAL1}
(u, T^\ast_a \varphi)=(f, \varphi), \quad \text{for any}\quad \varphi\in C^\infty_0(\Omega).
\end{equation}
We set then $T_{a,1} u=f$.
\end{defn}
$T_{a,1}$ is called \textit{maximal extension of} $T_a$.
Clearly $C^\infty_0(\Omega)\subset D(T_{a,1})$ and $T_{a,1}$ is linear.
\begin{prop}\label{PROMAXIMAL1}
For any $T_a\in \, \widetilde {\textup{Op}}\, S_{m,\Lambda}$,
\begin{itemize}
\item[i)] $T_{a,1}$ coincides with $T_a$ in distribution sense;
\item[ii)] $T_{a,1}$ is a closed linear operator in $L^p_{\rm loc}(\Omega)$.
\end{itemize}
\end{prop}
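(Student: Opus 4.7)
The plan is to handle (i) by unwinding the definition of $T_{a,1}$ against the distributional action of $T_a$, and to prove (ii) by verifying the sequential criterion for closedness \eqref{EQMM1} on test functions, leveraging that the formal adjoint of a properly supported operator is again properly supported.

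For (i), I would invoke Remark \ref{REMWS2}(4): since $T_a$ is properly supported, it admits a continuous extension $\mathcal{D}'(\Omega)\to\mathcal{D}'(\Omega)$ given by the duality formula $(T_au,\varphi)=(u,T_a^\ast\varphi)$ for $u\in\mathcal{D}'(\Omega)$ and $\varphi\in C^\infty_0(\Omega)$. The right-hand side is meaningful because, by Proposition \ref{PROMM1}(i), $T_a^\ast\in\widetilde{\textup{Op}}\,S_{m,\Lambda}(\Omega)$ maps $C^\infty_0(\Omega)$ into itself. Consequently, the defining identity \eqref{MAXIMAL1} is exactly the statement that the distribution $T_au$ is represented by the locally $L^p$ function $f$, and density of $C^\infty_0(\Omega)$ in $\mathcal{D}(\Omega)$ fixes such $f$ uniquely. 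This yields (i).

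For (ii), I would take $\{u_k\}\subset D(T_{a,1})$ with $u_k\to u$ and $T_{a,1}u_k\to g$ in $L^p_{\rm loc}(\Omega)$, and test the defining identity
\begin{equation*}
(u_k,T_a^\ast\varphi)=(T_{a,1}u_k,\varphi),\qquad \varphi\in C^\infty_0(\Omega),
\end{equation*}
against every fixed test function $\varphi$. Since $T_a^\ast\varphi\in C^\infty_0(\Omega)$, H\"older's inequality on a compact set $K$ containing $\textup{supp}\,\varphi\cup\textup{supp}\,T_a^\ast\varphi$ allows both sides to pass to the limit, producing $(u,T_a^\ast\varphi)=(g,\varphi)$ for every $\varphi\in C^\infty_0(\Omega)$. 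Definition \ref{DEFMAXIMAL} then delivers $u\in D(T_{a,1})$ and $T_{a,1}u=g$, verifying \eqref{EQMM1}.

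The only delicate point is ensuring that $T_a^\ast\varphi$ stays compactly supported so that the sesquilinear pairings live on a single compact $K$ (where $L^p_{\rm loc}$-convergence becomes honest $L^p$-convergence); this is precisely what the properly supported hypothesis on $T_a$, combined with Proposition \ref{PROMM1}(i), provides. Aside from this bookkeeping, the argument is a careful reading of the duality definitions and presents no further obstacle.
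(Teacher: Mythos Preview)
Your argument is correct and follows essentially the same route as the paper: for (i) both you and the paper identify the defining relation \eqref{MAXIMAL1} with the distributional duality $\langle T_a u,\bar\varphi\rangle=\langle u,\overline{T_a^\ast\varphi}\rangle$, and for (ii) both test the identity $(u_k,T_a^\ast\varphi)=(T_{a,1}u_k,\varphi)$ and pass to the limit. Your added remarks on H\"older's inequality and on Proposition~\ref{PROMM1}(i) ensuring $T_a^\ast\varphi\in C^\infty_0(\Omega)$ make explicit what the paper leaves implicit, but the underlying strategy is the same.
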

\begin{proof}
Consider both $u$ and $T_{a,1}u$ as distributions in $\mathcal D'(\Omega)$.  Definition \ref{DEFMAXIMAL} gives $\langle T_{a,1} u, \bar\varphi\rangle=\langle u, \overline{T_a^\ast \varphi} \rangle$ for any  $\varphi\in C^\infty_0(\Omega)$. On the other hand using \eqref{ADJOINT} we have $\langle T_a u, \bar\varphi \rangle=\langle u, \overline{T^\ast_a \varphi} \rangle$. Since $\bar\varphi$ ranges all over $C^\infty_0(\Omega)$, we conclude that $T_a u=T_{a,1}u$ in $\mathcal D'(\Omega)$.\\
Consider   now a sequence $\{u_k\}\subset D(T_{a,1})$ such that $u_k\to u$ and $T_{a,1}u_k\to f$, both in $L^p_{\rm loc}(\Omega)$. Definition \ref{DEFMAXIMAL} assures that, for any $\varphi \in C^\infty_0(\Omega)$ and $k=0,1\dots$, $(u_k, T^\ast_a \varphi)=(T_{a,1}u_k, \varphi)$. Thus for $k\to\infty$ we have $(u, T^\ast_a \varphi)=(f, \varphi)$, that is $u\in D(T_{a,1})$ and $f=T_{a,1}u$. The proof is then concluded.
\end{proof}
For the dual operator $T'_{a,1}$ defined by \eqref{DUAL2} we have the following properties.
\begin{prop}\label{PRODUAL}
Consider $a(x,\xi)\in S_{m,\Lambda}(\Omega)$, with $m(\xi)\geq c>0$, $\xi\in\mathbb R^n$. Then
\begin{itemize}
\item[i.] $C^\infty_0(\Omega)\subset D(T'_{a,1})$;
\item[ii.] $T_{a,1}$ is an extension of $T_{a,0}$;
\item[iii.] $T_{a,1}$ is the largest closed extension of $T_a$ having $C^\infty_0(\Omega)$ contained in the domain of its adjoint $T'_{a,1}$.
\end{itemize}
\end{prop}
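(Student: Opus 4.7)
The plan is to handle the three items in sequence, each reducing to the defining identity \eqref{MAXIMAL1} of $T_{a,1}$ together with the facts from Proposition \ref{PROMAXIMAL1} that $T_{a,1}$ is closed and restricts to $T_a$ on $C^\infty_0(\Omega)$. Throughout I identify $C^\infty_0(\Omega)$ with a subspace of the strong dual of $L^p_{\textup{loc}}(\Omega)$ via the sesquilinear pairing $(\cdot,\cdot)$ used in Definition \ref{DEFMAXIMAL}; this is legitimate because any $\varphi\in C^\infty_0(\Omega)$ has compact support and is bounded, so $u\mapsto (u,\varphi)$ defines a continuous linear functional on $L^p_{\textup{loc}}(\Omega)$.

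For (i) I would fix $\varphi\in C^\infty_0(\Omega)$ and look for an element of the dual implementing $(T_{a,1}u,\varphi)=(u,\cdot)$ on the whole of $D(T_{a,1})$. The natural candidate is $T_a^\ast\varphi$: by part i of Proposition \ref{PROMM1} the adjoint $T_a^\ast$ is properly supported, hence maps $C^\infty_0(\Omega)$ into itself, and Definition \ref{DEFMAXIMAL} gives at once $(T_{a,1}u,\varphi)=(u,T_a^\ast\varphi)$ for every $u\in D(T_{a,1})$. This yields $\varphi\in D(T'_{a,1})$ with $T'_{a,1}\varphi=T_a^\ast\varphi$.

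For (ii) I would take $u\in D(T_{a,0})$ and use the very definition of the smallest closed extension to produce a sequence $\{u_k\}\subset C^\infty_0(\Omega)$ with $u_k\to u$ and $T_au_k\to T_{a,0}u$ in $L^p_{\textup{loc}}(\Omega)$. By Proposition \ref{PROMAXIMAL1}, each $u_k$ lies in $D(T_{a,1})$ with $T_{a,1}u_k=T_au_k$, and $T_{a,1}$ is closed, so passing to the limit forces $u\in D(T_{a,1})$ and $T_{a,1}u=T_{a,0}u$.

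Step (iii) is the substantive one, and the anticipated main obstacle is to convert information about $W'$ on test functions into the integral identity of Definition \ref{DEFMAXIMAL}. Let $W$ be a closed extension of $T_a$ with $C^\infty_0(\Omega)\subset D(W')$. First I would rerun the argument of (i), with $T_a$ in place of $T_{a,1}$, to get $T'_a\varphi=T_a^\ast\varphi$ for every $\varphi\in C^\infty_0(\Omega)$. Then part ii of Proposition \ref{PROMM4} says that $T'_a$ is an extension of $W'$, so $W'\varphi=T'_a\varphi=T_a^\ast\varphi$ on every test function. Finally, for $u\in D(W)$ and $\varphi\in C^\infty_0(\Omega)$ the duality identity reads $(Wu,\varphi)=(u,W'\varphi)=(u,T_a^\ast\varphi)$, which by Definition \ref{DEFMAXIMAL} forces $u\in D(T_{a,1})$ with $T_{a,1}u=Wu$; thus $W$ is contained in $T_{a,1}$, proving the claimed maximality.
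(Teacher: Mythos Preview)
Your proof is correct and follows essentially the same route as the paper's: the identification of $T'_{a,1}\varphi$ (resp.\ $T'_a\varphi$) with $T_a^\ast\varphi$ via the defining identity \eqref{MAXIMAL1}, the use of Proposition~\ref{PROMM4}(ii) to pass from $W'$ to $T'_a$, and the resulting verification of \eqref{MAXIMAL1} for $Wu$. The only cosmetic differences are that you work directly with the sesquilinear pairing $(\cdot,\cdot)$ (the paper uses the bilinear pairing, hence the factors $\overline{T^\ast_a\overline\psi}$), and in (ii) you invoke the closedness of $T_{a,1}$ from Proposition~\ref{PROMAXIMAL1} rather than re-deriving \eqref{MAXIMAL1} at the limit; both shortcuts are legitimate.
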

\begin{proof}
We follow here the same lines as in the proof of \cite[Propositions 12.6-12.9]{wong_pdo}.
\newline
{\it Statement i.} Recall that $D(T'_{a,1})$ is defined to be the vector space of linear functionals $\ell\in\left(L^p_{\rm loc}(\Omega)\right)^\prime$ such that
\begin{equation}\label{lf1}
\begin{array}{ll}
D(T_{a,1})\mapsto\mathbb C\\
\qquad u\mapsto\langle\ell, T_{a,1}u\rangle
\end{array}
\end{equation}
may be extended to a (unique) linear functional $T'_{a,1}\ell\in\left(L^p_{\rm loc}(\Omega)\right)^\prime$. This means that if for some $\lambda\in \left(L^p_{\textup{loc}}(\Omega)\right)'$, $\langle T_{a,1}u,\ell\rangle=\langle u,\lambda\rangle$, then $\lambda=T'_{a,1}\ell$; see \eqref{DUAL1}. Let $\varphi\in C^\infty_0(\Omega)$ be arbitrarily given.  Using Definition \ref{DEFMAXIMAL}, we obtain for  $u\in D(T_{a,1})$:
\begin{equation}\label{lf1a}
\langle T_{a,1}u, \varphi\rangle=(T_{a,1}u,\overline\varphi)=(u,T^\ast_{a}\overline\varphi)=\langle u,\lambda\rangle\,,
\end{equation}
with $\lambda=\overline{T^\ast_{a }\bar\varphi}$.
On the other hand, H\"{o}lder's inequality gives
\begin{equation}\label{holder}
\vert\langle u,\lambda\rangle\vert=|(u,T^\ast_{a}\overline\varphi)|\le\Vert T^\ast_a\overline\varphi\Vert_{L^{p'}(K)}\Vert u\Vert_{L^p(K)}\,,
\end{equation}
where $\frac1{p}+\frac1{p'}=1$ and $K$ is a compact subset of $\Omega$ containing the support of $T^\ast_a\overline\varphi$ (recall that $T^\ast_a$ is a properly supported operator and  see Proposition \ref{PROMM1}). The estimate \eqref{holder} shows that the linear functional
\begin{equation}\label{lf2}
D(T_{a,1})\ni u\mapsto\langle T_{a,1}u, \varphi\rangle
\end{equation}
extends to a unique continuous linear functional in $\left(L^p_{\rm loc}(\Omega)\right)^\prime$; then $\varphi\in D(T'_{a,1})$, in view of \eqref{lf1a}.
\newline
{\it Statement ii.} For arbitrary $u\in D(T_{a,0})$, there exist a sequence $\{\varphi_k\}$ in $C^\infty_0(\Omega)$ and a function $f\in L^p_{\rm loc}(\Omega)$ such that
\begin{equation}\label{conv1}
\varphi_k\to u\quad\mbox{and}\quad T_a\varphi_k\to f\quad\mbox{in}\,\,L^p_{\rm loc}(\Omega)\,,\,\,\mbox{as}\,\,k\to +\infty\,,
\end{equation}
and we set $T_{a,0}u=f$.
\newline
For every  $\psi\in C^\infty_0(\Omega)$, from \eqref{conv1} it follows that
\begin{equation}\label{conv2}
(\varphi_k,T^\ast_a\psi)\to (u,T^\ast_a\psi)\quad\mbox{and}\quad(T_a\varphi_k,\psi)\to(f,\psi)\,,\quad\mbox{as}\,\,k\to +\infty\,.
\end{equation}
Since $(T_a\varphi_k,\psi)=(\varphi_k,T^\ast_a\psi)$ for all  $k$, then $(f,\psi)=(u,T_a^\ast\psi)$, thus $u\in D(T_{a,1})$ and $T_{a,1}u=f$.
\newline
{\it Statement iii.} Let $B:D(B)\subset L^p_{\rm loc}(\Omega)\mapsto L^p_{\rm loc}(\Omega)$ be a closed extension of $T_a$ such that $C^\infty_0(\Omega)$ is included in $D(B')$. We need to prove that $T_{a,1}$ is an extension of $B$. In view of Proposition \ref{PROMM4}, we already know that $T'_{a}$ extends $B'$; let us consider $u\in D(B)$ and $\psi\in C^\infty_0(\Omega)$; since $C^\infty_0(\Omega)\subset D(B')$, we get
\begin{equation}\label{dualita1}
\langle u, T'_a\psi\rangle=\langle u, B'\psi\rangle=\langle Bu, \psi\rangle\,.
\end{equation}
On the other hand, using the identity $T'_a\psi=\overline{T^\ast_a\overline\psi}$, from \eqref{dualita1} we get
\begin{equation*}
(u,T^\ast_a\overline\psi)=\langle u,\overline{T^\ast_a\overline\psi}\rangle=\langle\psi, Bu\rangle=(Bu, \overline\psi)\,.
\end{equation*}
Since $\psi$ is arbitrary in $C^\infty_0(\Omega)$, the relation above shows that $u\in D(T_{a,1})$  and $T_{a,1}u=Bu$.
\end{proof}
\begin{rem}
Thanks to the  Propositions \ref{PROMM4} and \ref{PRODUAL}  $T'_{a,0}$ is an extension of $T'_{a,1}$, and its domain $D(T'_{a,0})$ contains $C^\infty_0(\Omega)$.
\end{rem}
The following is an extension of the Agmon-Douglis-Nirenberg inequality \cite{ADN}.
\begin{lemma}\label{lemma_ell}
Let the weight function $m=m(\xi)$ satisfy $m(\xi)\ge c>0$, for all $\xi\in\mathbb R^n$, and the operator $T_a\in \widetilde{\textup{Op}}\,S_{m,\Lambda}(\Omega)$ be $m-$elliptic. Then for every $\chi\in C^\infty_0(\Omega)$ there exist $\widetilde\chi\in C^\infty_0(\Omega)$ and $C_\chi>0$ such that the following holds
\begin{equation}\label{ell_est}
\Vert\chi u\Vert_{H^p_m}\le C_\chi\left\{\Vert \widetilde\chi T_au\Vert_{L^p}+\Vert\widetilde\chi u\Vert_{L^p}\right\}\,,\quad\forall\,u\in H^p_{m, {\rm loc}}(\Omega)\,\footnote{Notice that $H^p_{m,{\rm loc}}(\Omega)\hookrightarrow L^p_{\rm loc}(\Omega)$, in view of $m(\xi)\ge c>0$ cf. \eqref{SOBEMB}.}.
\end{equation}
\end{lemma}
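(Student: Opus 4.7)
The plan is to reduce \eqref{ell_est} to the Sobolev continuity of a parametrix for $T_a$, combined with the proper-support cutoff trick. By Proposition \ref{PROPSS2}, $m$-ellipticity of $T_a$ yields a properly supported parametrix $b(x,D) \in \widetilde{\textup{Op}}\,S_{1/m,\Lambda}(\Omega)$ with $b(x,D)T_a = \textup{Id} + \rho(x,D)$, $\rho \in S^{-\infty}(\Omega)$; since $b(x,D)$ and $T_a$ are both properly supported, so is $\rho(x,D)$. Applied to $u \in H^p_{m,\textup{loc}}(\Omega)$ and multiplied by $\chi$, this gives the key identity
\begin{equation*}
\chi u = \chi b(x,D) T_a u - \chi \rho(x,D) u\qquad\text{in}\,\,\mathcal{D}'(\Omega).
\end{equation*}

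Next I would invoke Remark \ref{REMWS2}(6) to pick a single $\widetilde\chi \in C^\infty_0(\Omega)$ (taking a test function that dominates both the cutoffs associated with $b(x,D)$ and $\rho(x,D)$) such that $\chi b(x,D) v = \chi b(x,D)(\widetilde\chi v)$ and $\chi \rho(x,D) v = \chi \rho(x,D)(\widetilde\chi v)$ for every $v \in \mathcal{D}'(\Omega)$. Note that the footnote embedding $H^p_{m,\textup{loc}}(\Omega) \hookrightarrow L^p_{\textup{loc}}(\Omega)$ together with Proposition \ref{SOBOLEVCONT} applied to $T_a$ (with source and symbol weight both equal to $m$, yielding quotient weight $1$) ensures $\widetilde\chi u,\,\widetilde\chi T_a u \in L^p_{\textup{comp}}(\Omega)$, so the right-hand side of \eqref{ell_est} is well defined.

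It then remains to bound $\|\chi b(x,D)(\widetilde\chi T_a u)\|_{H^p_m}$ and $\|\chi \rho(x,D)(\widetilde\chi u)\|_{H^p_m}$ by $L^p$ norms. This follows from Proposition \ref{SOBOLEVCONT} applied with source weight $m_0 = 1$ (so $H^p_1 = L^p$) and symbol weight $m' = 1/m$: it shows that $b(x,D)$ maps $L^p_{\textup{comp}}(\Omega)$ continuously into $H^p_{m,\textup{loc}}(\Omega)$; the same mapping property holds for $\rho(x,D)$ because $\rho \in S^{-\infty}(\Omega) \subset S_{1/m,\Lambda}(\Omega)$ in view of \eqref{EQWS2D}. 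Reading off the continuity estimate through the seminorm $\|\chi\,\cdot\|_{H^p_m}$ on the target gives constants $C_1,C_2>0$ with
\begin{equation*}
\|\chi b(x,D)(\widetilde\chi T_a u)\|_{H^p_m} \leq C_1 \|\widetilde\chi T_a u\|_{L^p}, \quad \|\chi \rho(x,D)(\widetilde\chi u)\|_{H^p_m} \leq C_2 \|\widetilde\chi u\|_{L^p},
\end{equation*}
and the triangle inequality produces \eqref{ell_est} with $C_\chi = C_1 + C_2$.

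The argument is essentially a routine parametrix reduction, and I do not anticipate any serious obstacle; the only delicate bookkeeping points are the selection of a single $\widetilde\chi$ compatible with the proper supports of both $b(x,D)$ and $\rho(x,D)$, which is immediate since the union of two compact sets is compact, and the verification that $T_a u$ sits in $L^p_{\textup{loc}}(\Omega)$, which rests on the $m\geq c>0$ assumption via the Sobolev embedding \eqref{SOBEMB}.
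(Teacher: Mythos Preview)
Your proof is correct and follows essentially the same route as the paper: parametrix from Proposition~\ref{PROPSS2}, insertion of a single cutoff $\widetilde\chi$ via the proper-support trick, and the Sobolev mapping property of Proposition~\ref{SOBOLEVCONT} applied to $b(x,D)$ and to the regularizing remainder $\rho(x,D)$. Your justification that $\rho\in S_{1/m,\Lambda}(\Omega)$ via \eqref{EQWS2D} and your explicit check that $\widetilde\chi T_a u\in L^p$ are in fact slightly more detailed than the paper's own write-up.
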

\begin{proof}
Thanks to Proposition \ref{PROPSS2}, a properly supported operator $T_b$ exists, such that $b(x,\xi)\in S_{\frac 1m, \Lambda}(\Omega)$ and $T_bT_a={\rm Id}+T_\rho$, with $\rho(x,\xi)\in S^{-\infty}(\Omega)$. Since $T_b$ and $T_\rho$ are properly supported, for an arbitrary $\chi\in C^\infty_0(\Omega)$ another function $\widetilde\chi\in C^\infty_0(\Omega)$ may be found such that
\begin{equation}\label{id1}
\chi u=\chi T_b\widetilde\chi T_a u-\chi T_\rho\widetilde\chi u\,,\quad\forall\,u\in H^p_{m,{\rm loc}}(\Omega)\,.
\end{equation}
Since, by Proposition \ref{SOBOLEVCONT}, $\chi T_b=T_{\chi b}$ and $\chi T_\rho=T_{\chi\rho}$ extend to continuous linear operators from $H^p_{m,\textup{loc}}(\Omega)$ into $L^p_{\textup{loc}}(\Omega)$ and moreover $T_\rho$ is a regularizing operator, from \eqref{id1} we get, for a suitable positive constant $C_\chi$,
\begin{equation}
\begin{array}{ll}
\Vert\chi u\Vert_{H^p_m}\le\left\{\Vert T_{\chi b}\widetilde\chi T_a u\Vert_{H^p_m}+\Vert T_{\chi\rho}\widetilde\chi u\Vert_{H^p_m}\right\}\\
\\
\quad \le C_\chi\left\{\Vert \widetilde\chi T_au\Vert_{L^p}+\Vert\widetilde\chi u\Vert_{L^p}\right\}\,,
\end{array}
\end{equation}
that gives the desired estimate \eqref{ell_est}.
\end{proof}
\begin{thm}\label{ADN}
Consider an admissible weight  $m(\xi)\ge c>0$, for all $\xi\in\mathbb R^n$, and let the operator $T_a\in \widetilde{\textup{Op}}\,S_{m,\Lambda}(\Omega)$ be $m-$elliptic. Then
\begin{itemize}
\item[i)] $D(T_{a, 0})=H^p_{m,{\rm loc}}(\Omega)$;
\item[ii)] $T_{a, 0}=T_{a,1}$.
\end{itemize}
\end{thm}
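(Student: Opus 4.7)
The plan for part i) is to split into the two inclusions $D(T_{a,0})\subseteq H^p_{m,{\rm loc}}(\Omega)$ and $H^p_{m,{\rm loc}}(\Omega)\subseteq D(T_{a,0})$. For the first, take $u\in D(T_{a,0})$ with a defining sequence $\{\varphi_k\}\subset C^\infty_0(\Omega)$ such that $\varphi_k\to u$ and $T_a\varphi_k\to T_{a,0}u$ in $L^p_{\rm loc}(\Omega)$. Fix $\chi\in C^\infty_0(\Omega)$ and let $\widetilde\chi$ be the companion cutoff produced by Lemma \ref{lemma_ell}; applying the a priori estimate \eqref{ell_est} to the differences $\varphi_k-\varphi_j$ shows that $\{\chi\varphi_k\}$ is Cauchy in the Banach space $H^p_m$. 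Its $H^p_m$-limit must coincide in $\mathcal S'(\mathbb R^n)$ with the $L^p$-limit $\chi u$, so $\chi u\in H^p_m$ for every $\chi\in C^\infty_0(\Omega)$, i.e.\ $u\in H^p_{m,{\rm loc}}(\Omega)$.

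The reverse inclusion is essentially a density and continuity argument. Given $u\in H^p_{m,{\rm loc}}(\Omega)$, use the density of $C^\infty_0(\Omega)$ in $H^p_{m,{\rm loc}}(\Omega)$ noted in the excerpt right before \eqref{SOBEMB} to obtain $\{\varphi_k\}\subset C^\infty_0(\Omega)$ with $\varphi_k\to u$ in $H^p_{m,{\rm loc}}(\Omega)$. The Sobolev embedding \eqref{SOBEMB} yields $\varphi_k\to u$ in $L^p_{\rm loc}(\Omega)$, while Proposition \ref{SOBOLEVCONT} with $m'=m$ (so that $m/m'\equiv 1$) gives the continuity $T_a:H^p_{m,{\rm loc}}(\Omega)\to L^p_{\rm loc}(\Omega)$, hence $T_a\varphi_k\to T_au$ in $L^p_{\rm loc}(\Omega)$. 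This exactly shows $u\in D(T_{a,0})$.

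For part ii), Proposition \ref{PRODUAL}(ii) already provides $T_{a,0}\subseteq T_{a,1}$, so only $D(T_{a,1})\subseteq H^p_{m,{\rm loc}}(\Omega)$ is left; this is where the parametrix of Proposition \ref{PROPSS2} enters. Take $T_b\in\widetilde{\textup{Op}}\,S_{1/m,\Lambda}(\Omega)$ with $T_bT_a={\rm Id}+T_\rho$ and $\rho\in S^{-\infty}(\Omega)$. For $u\in D(T_{a,1})$, Proposition \ref{PROMAXIMAL1}(i) identifies $T_au$ (in the distributional sense) with the $L^p_{\rm loc}(\Omega)$-function $T_{a,1}u$, and applying the parametrix identity to $u$ gives
\begin{equation*}
u=T_b(T_{a,1}u)-T_\rho u\,.
\end{equation*}
By Proposition \ref{SOBOLEVCONT} (with source space $L^p_{\rm loc}(\Omega)=H^p_{1,{\rm loc}}(\Omega)$) the first summand lies in $H^p_{m,{\rm loc}}(\Omega)$, while the regularizing operator $T_\rho$ sends $L^p_{\rm loc}(\Omega)$ into $C^\infty(\Omega)\subset H^p_{m,{\rm loc}}(\Omega)$. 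Hence $u\in H^p_{m,{\rm loc}}(\Omega)=D(T_{a,0})$, and the equality $T_{a,0}u=T_{a,1}u$ follows because $T_{a,1}$ extends $T_{a,0}$.

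The technical heart of the theorem has already been encoded in Lemma \ref{lemma_ell} and Proposition \ref{PROPSS2}, so the steps above are largely bookkeeping. The only mildly delicate point is to justify that the operator-theoretic identity $T_bT_a={\rm Id}+T_\rho$, proved on $C^\infty_0(\Omega)$ (or equivalently on $\mathcal D'(\Omega)$), can be evaluated on a generic $u\in D(T_{a,1})\subset L^p_{\rm loc}(\Omega)$; this is legitimate here because $T_a$ and $T_b$ are properly supported and $T_au$ is a bona fide distribution, so both sides of the parametrix identity are well-defined on $u$.
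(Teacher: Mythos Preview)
Your proof is correct and follows essentially the same approach as the paper's own proof: both inclusions in i) are handled via density plus Proposition \ref{SOBOLEVCONT} in one direction and the Agmon--Douglis--Nirenberg estimate of Lemma \ref{lemma_ell} applied to differences in the other, while ii) is reduced to $D(T_{a,1})\subseteq H^p_{m,{\rm loc}}(\Omega)$ via the parametrix identity of Proposition \ref{PROPSS2} together with Proposition \ref{PROMAXIMAL1}(i). The only cosmetic differences are that you treat the two inclusions of i) in the opposite order and phrase the Cauchy argument at the level of $\chi\varphi_k$ in $H^p_m$ rather than $\varphi_k$ in the Fr\'echet space $H^p_{m,{\rm loc}}(\Omega)$, which amounts to the same thing.
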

\begin{proof}
{\it Statement i)} Let $u\in H^p_{m,{\rm loc}}(\Omega)$. Since $C^\infty_0(\Omega)$ is dense in $H^p_{m,{\rm loc}}(\Omega)$, we may find a sequence $\left\{\varphi_k\right\}$ of functions in $C^\infty_0(\Omega)$ such that $\varphi_k\to u$ in $H^p_{m,{\rm loc}}(\Omega)$.  $H^p_{m,{\rm loc}}(\Omega)\hookrightarrow L^p_{\rm loc}(\Omega)$ since $m(\xi)\ge c>0$, then using Proposition \ref{SOBOLEVCONT} we  obtain that
\begin{equation}\label{conv3}
\varphi_k\to u\quad\mbox{and}\quad T_a\varphi_k\to T_au\,,\,\,\mbox{as}\,\,k\to\infty,\quad\mbox{in}\,\,L^p_{\rm loc}(\Omega)\,.
\end{equation}
This shows that $u\in D(T_{a,0})$, then $H^p_{m,{\rm loc}}(\Omega)\subseteq D(T_{a,0})$.
\newline
Conversely, assume $u\in D(T_{a,0})$; by definition of $T_{a,0}$ there exists again a sequence $\left\{\varphi_k\right\}$ in $C^\infty_0(\Omega)$ such that
\begin{equation}\label{conv4}
\varphi_k\to u\quad\mbox{and}\quad T_a\varphi_k\to f\,,\,\,\mbox{as}\,\,k\to\infty,\quad\mbox{in}\,\,L^p_{\rm loc}(\Omega)\,,
\end{equation}
for some $f\in L^p_{\rm loc}(\Omega)$. Hence $\left\{\varphi_k\right\}$ and $\left\{T_a\varphi_k\right\}$ are Cauchy sequences in $L^p_{\rm loc}(\Omega)$ and, thanks to the estimate \eqref{ell_est} (where $u$ is replaced by $\varphi_k-\varphi_h$), $\left\{\varphi_k\right\}$ is a Cauchy sequence in $H^p_{m,{\rm loc}}(\Omega)$,  then $\varphi_k\to v$, for some $v\in H^p_{m,{\rm loc}}(\Omega)$, as $k\to \infty$. But, since the sequence  $\left\{\varphi_k\right\}$ is also convergent to $u$ in $L^p_{\rm loc}(\Omega)$, in view of \eqref{SOBEMB}, we have that $u=v\in H^p_{m,{\rm loc}}(\Omega)$. This shows that $D(T_{a,0})\subseteq H^p_{m,{\rm loc}}(\Omega)$.
\newline
{\it Statement ii)} $T_{a,0}$ is the smallest closed extension of $T_a:C^\infty_0(\Omega)\mapsto L^p_{\rm loc}(\Omega)$, then, thanks to Proposition \ref{PRODUAL} and the first part of theorem, it suffices to prove that $D(T_{a,1})\subseteq H^p_{m,{\rm loc}}(\Omega)$. Let $u\in D(T_{a,1})$,  arguing on the ellipticity of the symbol $a(x,\xi)$, as in the proof of Proposition \ref{PRODUAL}, we find that
\begin{equation}\label{id2}
u=T_bT_a u-T_\rho u\,,
\end{equation}
where $b(x,\xi)\in S_{1/m,\Lambda}(\Omega)$, $\rho(x,\xi)\in S^{-\infty}(\Omega)$ and the operators $T_b$ and $T_\rho$ are properly supported. By Proposition \ref{PROMAXIMAL1}, $T_{a,1}u=T_au$ in the distribution sense. Thus by definition of $T_{a,1}$, $T_au\in L^p_{\rm loc}(\Omega)$. Since $T_b\in \widetilde{\textup{ Op}}S_{1/m,\Lambda}(\Omega)$, it follows from Proposition \ref{SOBOLEVCONT} that $T_bT_au\in H^p_{m,{\rm loc}}(\Omega)$. On the other hand, since $u\in L^p_{\rm loc}(\Omega)$ and $T_\rho$ is a regularizing properly supported operator, $T_\rho u\in H^p_{m,{\rm loc}}(\Omega)$ also follows from Proposition \ref{SOBOLEVCONT}. By \eqref{id2} we then get $u\in H^p_{m,{\rm loc}}(\Omega)$. This gives $D(T_{a,1})\subseteq H^p_{m,{\rm loc}}(\Omega)$ which ends the proof.
\end{proof}

\section{Microlocal properties}\label{MP}
As a direct application of  Proposition \ref{PROPSS2} we have:
\begin{prop}[Regularity of solution to elliptic equations]\label{PROPSS3}
Consider  $p\in]1,\infty[$,  $m(\xi)$, $m'(\xi)$ admissible weights, $a(x,\xi)\in S_{m', \Lambda}(\Omega)$ $m'-$elliptic symbol. Then for every $u\in \mathcal E'(\Omega)$ such that $a(x,D)u\in H^p_{m/m',\textup{loc}} (\Omega)$, we obtain $u\in H^p_{m,\textup{comp}}(\Omega)$. If $a(x, D)$ is properly supported, then $u\in H^p_{m, \textup{loc}}(\Omega)$ for every $u\in\mathcal D'(\Omega)$ such that $a(x,D)u\in H^p_{m/m',\textup{ loc}}(\Omega)$.
\end{prop}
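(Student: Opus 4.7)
The plan is to prove both statements by invoking the parametrix construction from Proposition \ref{PROPSS2} together with the Sobolev mapping properties from Proposition \ref{SOBOLEVCONT}. The core identity will be
\begin{equation*}
u = b(x,D)\,a(x,D)\,u - \rho(x,D)\,u,
\end{equation*}
where $b(x,D)\in\widetilde{\textup{Op}}\,S_{1/m',\Lambda}(\Omega)$ is properly supported and $\rho\in S^{-\infty}(\Omega)$ is the remainder. Noting that a symbol in $S_{1/m',\Lambda}$ produces a gain of $m'$ derivatives in the Sobolev scale, the term $b(x,D)a(x,D)u$ should land in $H^p_{m,\textup{loc}}$ starting from $a(x,D)u\in H^p_{m/m',\textup{loc}}$ since $(m/m')\cdot m'=m$, and the smoothing remainder will be harmless.

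For the second (properly supported) statement I would argue directly. Since $a(x,D)$ is properly supported, the parametrix $b(x,D)$ together with $\rho(x,D)=b(x,D)a(x,D)-\textup{Id}$ are also properly supported. Apply the third mapping in Proposition \ref{SOBOLEVCONT} (equation \eqref{EQSS8}) to deduce $b(x,D)a(x,D)u\in H^p_{m,\textup{loc}}(\Omega)$ from the hypothesis $a(x,D)u\in H^p_{m/m',\textup{loc}}(\Omega)$. The remainder $\rho(x,D)u$ belongs to $C^\infty(\Omega)\subset H^p_{m,\textup{loc}}(\Omega)$ because a properly supported regularizing operator sends $\mathcal D'(\Omega)$ into $C^\infty(\Omega)$. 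Adding these two contributions via the identity yields $u\in H^p_{m,\textup{loc}}(\Omega)$.

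For the first statement I need an extra preliminary step because $a(x,D)$ is not assumed to be properly supported, so Proposition \ref{SOBOLEVCONT} cannot be applied to $a(x,D)u$ directly. Here I would use Remark \ref{REMWS2}(6) to pick a properly supported $a'(x,D)\in\widetilde{\textup{Op}}\,S_{m',\Lambda}(\Omega)$ with $a-a'\in S^{-\infty}(\Omega)$; since $u\in\mathcal E'(\Omega)$, the difference $(a-a')(x,D)u$ lies in $C^\infty(\Omega)\subset H^p_{m/m',\textup{loc}}(\Omega)$, so $a'(x,D)u\in H^p_{m/m',\textup{loc}}(\Omega)$ as well, and moreover has compact support, hence $a'(x,D)u\in H^p_{m/m',\textup{comp}}(\Omega)$. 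Now the parametrix of $a'$ and the first mapping in Proposition \ref{SOBOLEVCONT} (equation \eqref{EQSS6}) give $b(x,D)a'(x,D)u\in H^p_{m,\textup{loc}}(\Omega)$; since $\rho(x,D)u\in C^\infty_0(\Omega)$, the identity forces $u\in H^p_{m,\textup{loc}}(\Omega)$, and compactness of $\textup{supp}\,u$ upgrades this to $u\in H^p_{m,\textup{comp}}(\Omega)$ (if $\varphi\in C^\infty_0(\Omega)$ equals $1$ on $\textup{supp}\,u$ then $u=\varphi u\in H^p_m$ with support in the compact set $\textup{supp}\,\varphi$).

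The argument is essentially bookkeeping once the parametrix is in hand; the only delicate point, and the step I expect to need the most care in writing, is the reduction to a properly supported representative in the first case and the verification that $a'(x,D)u$ is genuinely in the compactly supported Sobolev space so that the stronger mapping \eqref{EQSS6} (rather than the weaker local one) applies.
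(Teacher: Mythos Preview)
Your proposal is correct and follows essentially the same route as the paper: apply the parametrix identity $u=b(x,D)a(x,D)u-\rho(x,D)u$ from Proposition \ref{PROPSS2} and read off the Sobolev regularity via Proposition \ref{SOBOLEVCONT}. The only difference is that your treatment of the first (non--properly supported) case is more elaborate than needed: since the parametrix $b(x,D)$ furnished by Proposition \ref{PROPSS2} is already properly supported, \eqref{EQSS8} applies directly to $a(x,D)u\in H^p_{m/m',\textup{loc}}(\Omega)$, and $\rho(x,D)u\in C^\infty(\Omega)$ because $u\in\mathcal E'(\Omega)$; the detour through a properly supported replacement $a'$ is therefore unnecessary, though harmless.
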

\begin{proof}
We argue directly in the case where $a(x,D)$ is properly supported, the general case being completely analogous. Thanks to Proposition \ref{PROPSS2}, there exists $b(x,D)\in \widetilde{\textup{ Op}}\, S_{1/m', \Lambda}(\Omega)$, such that $b(x,D)a(x,D)={\rm Id}+\rho(x,D)$, with $\rho(x,\xi)\in S^{-\infty}(\Omega)$. Since $\rho(x,D)$ is a regularizing operator and  $a(x,D)u\in H^p_{m/m',\textup{ loc}}(\Omega)$, we can conclude from \eqref{EQSS6} that $u=b(x,D)\left(a(x,D) u\right)-\rho(x,D) u\in H^p_{m,\textup{loc}}(\Omega)$.
\end{proof}
Since the complete lack of any homogeneity property of $\Lambda(\xi)$ and $m(\xi)$ prevents us from defining the characteristic set of $a(x,D)$ in terms of conic neighborhoods in $\mathbb R^n_\xi$,  let us introduce the following alternative tools.\\
The $\Lambda$-neighborhood of a set $X\subset\mathbb R^n$ with length $\varepsilon>0$, is defined as the open set:
\begin{equation}\label{XN}
X_{\varepsilon\Lambda}:=\bigcup_{\xi^0\in X}\left\{\vert\xi_j-\xi^0_j\vert <\varepsilon\lambda_j(\xi^0),  \quad\text{for}\quad j=1,\dots,n\right\}.
\end{equation}
\noindent
Moreover for $x_0\in\Omega$ we set:
\begin{equation}\label{DN}
X(x_0):=\{x_0\}\times X\quad , \quad X_{\varepsilon\Lambda}(x_0):= B_{\varepsilon}(x_0)\times X_{\varepsilon\Lambda},
\end{equation}
where $B_\varepsilon(x_0)$ is the open ball in $\Omega$ centered at $x_0$ with radius $\varepsilon$.
The following properties of  $\Lambda$-neighborhoods can be immediately deduced from \cite[Lemma 1.11]{RO1} (see also \cite{GM2} for an explicit proof). For every $\varepsilon>0$ a suitable $0<\varepsilon^\ast<\varepsilon$, depending only on $\varepsilon$ and $\Lambda$, can be found in such a way that for every $X\subset\mathbb R^n$:
\begin{eqnarray}
&&\left(X_{\varepsilon^\ast\Lambda}\right)_{\varepsilon^\ast\Lambda}\subset X_{\varepsilon\Lambda};\label{eqLAMBDA1}\\
&&\left(\mathbb R^n\setminus X_{\varepsilon\Lambda}\right)_{\varepsilon^\ast\Lambda}\subset\mathbb R^n\setminus X_{\varepsilon^\ast\Lambda}.\label{eqLAMBDA2}
\end{eqnarray}
\begin{defn}\label{MPDEF1}
A symbol $a(x,\xi)\in S_{m,\Lambda}(\Omega)$ is $m-$microlocally elliptic in a set $X\subset\mathbb R^n_\xi$ at the point $x_0\in\Omega$ if there exist positive constants $c_0, R_0$ such that
\begin{equation}\label{ME}
\vert a(x_0, \xi)\vert\geq c_0 m(\xi), \quad \text{when}\quad \xi\in X, \quad \vert\xi\vert>R_0\,.
\end{equation}
We write in this case $a(x,\xi)\in \textup {mce}_{m,\Lambda}(X(x_0))$. By ${\rm Op}\,\textup{mce}_{m,\Lambda}(X(x_0))$ we mean the class of the $m-$pseudodifferential operators with symbol in $\textup {mce}_{m,\Lambda}(X(x_0))$.
\end{defn}
\noindent
According to the notation introduced in Remark \ref{REMWS2}(2), we write respectively ${\rm mce}_{\Lambda}(X(x_0))$ and ${\rm Op}\,\textup{mce}_{\Lambda}(X(x_0))$ for the classes of zero order symbols in $S_{\Lambda}(\Omega)$ that are $m-$microlocally elliptic in $X$ at $x_0$ and the related pseudodifferential operators.
\begin{lemma}\label{MPPROP1}
If a symbol $a(x,\xi)\in \textup{mce}_{m,\Lambda}X(x_0)$ , $x_0\in\Omega$, then there exists a suitable $\varepsilon>0$ such $a(x,\xi)\in\textup{mce}_{m,\Lambda}X_{\varepsilon\Lambda}(x_0)$, that is for suitable constants $C, R>0$
\begin{equation}\label{MPPROP1a}
\vert a(x,\xi)\vert \geq Cm(\xi),\quad \text{for}\, \,(x,\xi)\in X_{\varepsilon\Lambda}(x_0),\quad \vert\xi\vert>R\,.
\end{equation}
\end{lemma}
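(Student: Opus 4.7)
The plan is to propagate the lower bound $|a(x_0, \xi)| \geq c_0 m(\xi)$ valid on $X$ (for $|\xi|$ large) to a $\Lambda$-neighborhood of $\{x_0\}\times X$, by controlling the increment $a(x,\eta) - a(x_0, \xi^0)$ via the mean value theorem and the symbol seminorm estimates \eqref{EQWS1}, combined with the slow variation properties \eqref{SW}, \eqref{SW1} of $\Lambda$ and $m$.

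First, I fix a compact neighborhood $K \subset\!\subset \Omega$ of $x_0$ (say a closed ball), which will give a uniform bound on the seminorms $c_{\alpha,\beta,K}$. For $\varepsilon > 0$ sufficiently small (to be chosen), take any $(x,\eta) \in X_{\varepsilon\Lambda}(x_0)$ and pick $\xi^0 \in X$ with $|\eta_j - \xi^0_j| < \varepsilon \lambda_j(\xi^0)$ for every $j$. Then $\sum_k |\eta_k - \xi^0_k|\lambda_k(\xi^0)^{-1} < n\varepsilon$. By \eqref{SW} and \eqref{SW1}, provided $n\varepsilon$ is smaller than the slow variation constant $c$, each point $\xi(t) = (1-t)\xi^0 + t\eta$ on the segment satisfies $\lambda_j(\xi(t)) \asymp \lambda_j(\xi^0)$ and $m(\xi(t)) \asymp m(\xi^0)$, with constants independent of $\xi^0$.

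Next, I decompose
\begin{equation*}
|a(x,\eta) - a(x_0,\xi^0)| \leq |a(x,\eta) - a(x_0,\eta)| + |a(x_0,\eta) - a(x_0,\xi^0)|.
\end{equation*}
For the first term, the mean value theorem and the $\beta = e_j$ estimate in \eqref{EQWS1} give
$|a(x,\eta) - a(x_0,\eta)| \leq C |x - x_0|\, m(\eta) \leq C \varepsilon\, m(\eta)$. For the second term, writing $a(x_0,\eta) - a(x_0,\xi^0) = \sum_j (\eta_j - \xi^0_j)\int_0^1 \partial_{\xi_j} a(x_0,\xi(t))\,dt$ and applying the $\alpha = e_j$ estimate along with the slow variation of $\Lambda$ and $m$ on the segment, I obtain
\begin{equation*}
|a(x_0,\eta) - a(x_0,\xi^0)| \leq \sum_j |\eta_j - \xi^0_j|\, C\, m(\xi^0)\lambda_j(\xi^0)^{-1} \leq n C \varepsilon\, m(\xi^0).
\end{equation*}
Using $m(\eta) \asymp m(\xi^0)$, both contributions are bounded by $C' \varepsilon\, m(\xi^0)$.

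Finally, I need $|\xi^0| > R_0$ so that the ellipticity assumption $|a(x_0,\xi^0)| \geq c_0 m(\xi^0)$ applies. Since $|\eta - \xi^0| \leq \varepsilon \sum_j \lambda_j(\xi^0) \leq C \varepsilon \langle \xi^0\rangle^C$ by \eqref{PG}, choosing $R = R(\varepsilon, R_0)$ sufficiently large guarantees that $|\eta| > R$ forces $|\xi^0| > R_0$. Combining, $|a(x,\eta)| \geq c_0 m(\xi^0) - C'\varepsilon m(\xi^0) \geq \tfrac{c_0}{2} m(\xi^0) \geq C m(\eta)$, provided $\varepsilon$ is chosen small enough that $C'\varepsilon < c_0/2$ (and simultaneously $n\varepsilon < c$, and $B_\varepsilon(x_0) \subset K$). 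The main bookkeeping obstacle is ensuring that the slow variation windows for $\lambda_j$ and $m$ are respected \emph{uniformly} along the segment $\xi(t)$ and independently of $\xi^0$; this is precisely what forces the simultaneous smallness of $n\varepsilon$ relative to the constant in \eqref{SW}–\eqref{SW1}, and it is the reason a single $\varepsilon$ works for all base points $\xi^0 \in X$.
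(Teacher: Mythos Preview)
Your proof is correct and follows essentially the same route as the paper's: mean value/Taylor expansion around $(x_0,\xi^0)$, symbol estimates \eqref{EQWS1}, slow variation \eqref{SW}--\eqref{SW1} along the segment, and the polynomial growth \eqref{PG} to pass from $|\eta|>R$ to $|\xi^0|>R_0$. The only cosmetic difference is that the paper expands $a(x,\xi)-a(x_0,\xi^0)$ in both variables simultaneously, whereas you split via the triangle inequality into an $x$-increment and a $\xi$-increment; this changes nothing of substance.
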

\begin{proof}
Let the symbol $a(x,\xi)\in S_ {m,\Lambda}(\Omega)$ be $m-$microlocally elliptic in $X\subset\mathbb R^n$ at the point $x_0\in\Omega$ and let $\xi^0\in X$ be arbitrarily fixed. Since $\Omega$ is open, a positive $\varepsilon^\ast$ can be found in such a way that the closed ball $\overline{B}_{\varepsilon^\ast}(x_0)$ is contained in $\Omega$; for $0<\varepsilon<\varepsilon^\ast$ and $(x,\xi)\in X_{\varepsilon\Lambda}(x_0)$, a Taylor expansion of $a(x,\xi)$ centered in $(x_0,\xi^0)$ gives
\begin{equation}\label{taylor1}
\begin{array}{l}
a(x,\xi)-a(x_0,\xi^0)=\sum\limits_{j=1}^n(x^j-x_0^j)\partial_{x^j}a(x_t,\xi^t)dt+(\xi_j-\xi^0_j)\partial_{\xi_j}a(x_t,\xi^t)dt\,,
\end{array}
\end{equation}
where $(x_t,\xi^t):=((1-t)x_0+tx,(1-t)\xi^0+ t\xi)$ for a suitable $0<t<1$. Since $|\xi^t_j-\xi^0_j|=|t||\xi_j-\xi^0_j|<\varepsilon\lambda_j(\xi^0)$ and $|x^{j}_t-x_0^j|=|t||x^j-x^j_0|<\varepsilon$, using \eqref{EQWS1} we can find $C^\ast>0$, depending only on $\varepsilon^\ast$, such that
\begin{equation}\label{taylor2}
\begin{array}{l}
\vert a(x,\xi)-a(x_0,\xi^0)\vert \leq\sum\limits_{j=1}^n \varepsilon C^\ast m(\xi^t)+\varepsilon\lambda_j(\xi^0)C^\ast m(\xi^t)\lambda_j^{-1}(\xi^t)\,.
\end{array}
\end{equation}
In view of \eqref{SW}, \eqref{SW1}, $\varepsilon>0$ can be chosen small enough such that
\begin{equation}\label{taylor3}
\begin{array}{ll}
\frac1{C}\lambda_j(\xi^0)\le\lambda_j(\xi^t)\le C\lambda_j(\xi^0)\,, &1\le j\le n\,,\\
&\\
\frac1{C}m(\xi^0)\le m(\xi^t)\le Cm(\xi^0), &
\end{array}
\end{equation}
for a suitable constant $C>1$ independent of $t$ and $\varepsilon$. Then \eqref{taylor2}, \eqref{taylor3} give
\begin{equation}\label{taylor2.1}
\begin{array}{l}
\vert a(x,\xi)-a(x_0,\xi^0)\vert \leq\hat C\varepsilon m(\xi^0)\,,
\end{array}
\end{equation}
with a suitable constant $\hat C>0$ independent of $\varepsilon$.
\newline
Let the condition \eqref{ME} be satisfied by $a(x,\xi)$ with positive constants $c_0,R_0$. Provided that  $0<\varepsilon<\varepsilon^\ast$ is taken sufficiently small, one can find a positive $R$, depending only on $R_0$, such that $|\xi|>R$ and $|\xi_j-\xi^0_j|<\varepsilon\lambda_j(\xi^0)$ for all $1\le j\le n$ yield $|\xi^0|>R_0$; indeed, from \eqref{PG} we have
\begin{equation*}
|\xi-\xi^0|\le\sum\limits_{j=1}^n|\xi_j-\xi^0_j|<\varepsilon\sum\limits_{j=1}^n\lambda_j(\xi^0)\le nC\varepsilon^\ast(1+|\xi_0|)^C\,,
\end{equation*}
and then
\begin{equation*}
|\xi|\le|\xi^0|+|\xi-\xi^0|\le|\xi^0|+nC\varepsilon^\ast(1+|\xi^0|)^C\,.
\end{equation*}
Hence, it is sufficient to choose $R$ such that $R>R_0+nC\varepsilon^\ast(1+R_0)^C$.
\newline
Since $|\xi^0|>R_0$, the microlocal $m-$ellipticity of $a(x,\xi)$ yields
\begin{equation}\label{ME1}
|a(x_0,\xi^0)|\ge c_0m(\xi^0)\,;
\end{equation}
then \eqref{taylor2.1} and \eqref{ME1} give for $(x,\xi)\in X_{\varepsilon\Lambda}(x_0)$ and $|\xi|>R$
\begin{equation}\label{ME2}
|a(x,\xi)|\ge|a(x_0,\xi^0)|-|a(x,\xi)-a(x_0,\xi^0)|\ge (c_0-\hat C\varepsilon)m(\xi^0)\ge\frac{c_0}{2}m(\xi^0)\,,
\end{equation}
up to a further shrinking of $\varepsilon>0$. From \eqref{ME2}, the condition \eqref{MPPROP1a} follows at once, by using that $m(\xi)\approx m(\xi^0)$.
\end{proof}
\begin{lemma}\label{LEMRO1}
For arbitrary $\varepsilon>0$ and $X\subset\mathbb R^n$ there exists a smooth function  $\sigma(\xi)$ satisfying \eqref{GLOBAL}, such that ${\rm supp}\,\sigma\subset X_{\varepsilon\Lambda}$ and $\sigma(\xi)=1$  if\, $\xi\in X_{\varepsilon^\prime\Lambda}$, for a suitable $\varepsilon^\prime$, satisfying $0<\varepsilon^\prime<\varepsilon$, depending only on $\varepsilon$ and $\Lambda$. Moreover for every $x_0\in\Omega$ there exists a symbol $\tau_0(x,\xi)\in S_{\Lambda}(\Omega)$ such that ${\rm supp}\,\tau_0\subset X_{\varepsilon\Lambda}(x_0)$ and $\tau_0(x,\xi)=1$, for $(x,\xi)\in X_{\varepsilon^\ast\Lambda}(x_0)$, with a suitable $\varepsilon^\ast$ satisfying $0<\varepsilon^\ast<\varepsilon$.
\end{lemma}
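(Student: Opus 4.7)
The plan is to construct $\sigma$ by a $\Lambda$-adapted mollification of the characteristic function of an intermediate $\Lambda$-neighborhood of $X$, and then obtain $\tau_0$ by tensoring $\sigma$ with a standard spatial cutoff at $x_0$.

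First I would pick $\chi\in C^\infty_0(\mathbb R^n)$ with $\chi\ge 0$, $\int\chi\,dy=1$ and $\operatorname{supp}\chi\subset\{|y|\le 1\}$, a small parameter $\delta>0$, and an intermediate radius $\varepsilon''$ with $0<\varepsilon''<\varepsilon$. The candidate is
\[
\sigma(\xi):=\int_{X_{\varepsilon''\Lambda}}\chi\!\left(\frac{\xi_1-\eta_1}{\delta\lambda_1(\eta)},\dots,\frac{\xi_n-\eta_n}{\delta\lambda_n(\eta)}\right)\frac{d\eta}{\prod_{j=1}^n\delta\lambda_j(\eta)}\,.
\]
The support and ``equals $1$'' properties should then follow from iterated use of \eqref{eqLAMBDA1}--\eqref{eqLAMBDA2}: for $\delta$ small enough relative to $\varepsilon$ and $\varepsilon''$, whenever $\xi\notin X_{\varepsilon\Lambda}$ the mollification window $\{\eta:|\xi_j-\eta_j|\le\delta\lambda_j(\eta),\ j=1,\dots,n\}$ misses $X_{\varepsilon''\Lambda}$, giving $\sigma(\xi)=0$; conversely, for $\xi\in X_{\varepsilon'\Lambda}$ with a suitable $\varepsilon'<\varepsilon''$, the same window lies entirely inside $X_{\varepsilon''\Lambda}$, so the integral collapses to $\int\chi=1$. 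The calibration of $\varepsilon',\varepsilon'',\delta,\varepsilon$ rests only on the constants of $\Lambda$, so $\varepsilon'$ depends only on $\varepsilon$ and $\Lambda$, as required.

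Next I would verify \eqref{GLOBAL}. Each $\xi$-derivative falling on $\chi$ yields a factor $(\delta\lambda_j(\eta))^{-1}$; the support of $\chi$ inside the integral forces $\sum_j|\xi_j-\eta_j|\lambda_j(\eta)^{-1}\le n\delta$, so the slowly varying condition \eqref{SW} (for $\delta$ small) gives $\lambda_j(\eta)\approx\lambda_j(\xi)$ uniformly, and the integral of the $L^1$-normalized mollifier is bounded by an absolute constant. This yields $|\partial^\alpha\sigma(\xi)|\le c_\alpha\Lambda(\xi)^{-\alpha}$, which is precisely \eqref{GLOBAL} for the trivial admissible weight $m\equiv 1$.

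For the $(x,\xi)$-version I would simply set $\tau_0(x,\xi):=\psi_0(x)\sigma(\xi)$, with $\psi_0\in C^\infty_0(B_\varepsilon(x_0))$ satisfying $\psi_0\equiv 1$ on $B_{\varepsilon^\ast}(x_0)$ for some $\varepsilon^\ast<\min(\varepsilon,\varepsilon')$, where $\varepsilon'$ is the one produced above. The tensor structure immediately gives $\operatorname{supp}\tau_0\subset X_{\varepsilon\Lambda}(x_0)$, $\tau_0\equiv 1$ on $X_{\varepsilon^\ast\Lambda}(x_0)$, and $\tau_0\in S_\Lambda(\Omega)$ (the $x$-derivatives hit only $\psi_0$, which is compactly supported). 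The main obstacle is really the first step: a clean bookkeeping of the chain $\varepsilon'<\varepsilon''<\varepsilon$ through \eqref{eqLAMBDA1}--\eqref{eqLAMBDA2} with a simultaneous choice of $\delta$ compatible with the slowly varying constants of $\Lambda$; once this chain is set up, the support, cutoff and derivative properties all follow by standard mollification arguments.
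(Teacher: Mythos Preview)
The paper itself does not give a proof of this lemma; it simply refers to \cite[Lemma~1.10]{RO1}. Your mollification scheme is essentially the standard route and the derivative estimate and the $\tau_0$ part are fine, but there is one genuine gap in the ``$\sigma\equiv 1$'' step.

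You claim that once the window $\{\eta:|\xi_j-\eta_j|\le\delta\lambda_j(\eta)\}$ is contained in $X_{\varepsilon''\Lambda}$, ``the integral collapses to $\int\chi=1$''. This would require the change of variables $\eta\mapsto y$ with $y_j=(\xi_j-\eta_j)/(\delta\lambda_j(\eta))$ to have Jacobian exactly $\prod_j\delta\lambda_j(\eta)$, which is false: since the scaling factors $\lambda_j(\eta)$ depend on the integration variable, the Jacobian picks up extra terms (and in any case the $\lambda_j$ are only assumed continuous here, so the map need not even be a diffeomorphism). Consequently the unrestricted integral
\[
\Psi(\xi):=\int_{\mathbb R^n}\chi\!\left(\frac{\xi_1-\eta_1}{\delta\lambda_1(\eta)},\dots,\frac{\xi_n-\eta_n}{\delta\lambda_n(\eta)}\right)\frac{d\eta}{\prod_{j}\delta\lambda_j(\eta)}
\]
is close to $1$ (by slow variation) but not equal to $1$, so your $\sigma$ need not equal $1$ on any $X_{\varepsilon'\Lambda}$. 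The fix is standard: show, by the same slowly-varying comparison $\lambda_j(\eta)\approx\lambda_j(\xi)$ on the support, that $\Psi$ is smooth, bounded above and below by positive constants, and obeys $|\partial^\alpha\Psi(\xi)|\le c_\alpha\Lambda(\xi)^{-\alpha}$; then replace your $\sigma$ by $\sigma/\Psi$. This quotient has the same support, now equals $1$ exactly on $X_{\varepsilon'\Lambda}$, and still satisfies \eqref{GLOBAL} by the Leibniz rule. Alternatively you could scale by $\lambda_j(\xi)$ instead of $\lambda_j(\eta)$, which makes the normalization automatic, but then you must first pass to a smooth equivalent weight vector so that differentiating through $\lambda_j(\xi)$ is legitimate.
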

For proof see \cite[Lemma 1.10]{RO1}.
\begin{defn}\label{MPDEF2}
We say that a symbol $a(x,\xi)\in S_{m,\Lambda}(\Omega)$ is rapidly decreasing in $\Theta\subset \Omega\times\mathbb R^n$ if there exists $a_0(x,\xi)\in S_{m,\Lambda}(\Omega)$ such that $a(x,\xi)\sim a_0(x,\xi)$ and  $a_0(x,\xi)=0$ in $\Theta$
\end{defn}
\begin{thm}\label{MPTEO1}
For every symbol $a(x,\xi)\in\textup{mce}_{m,\Lambda}X(x_0)$,  $x_0\in\Omega$, there exists a symbol $b(x,\xi)\in S_{1/m,\Lambda}(\Omega)$ such that the associated operator $b(x,D)$ is properly supported and
\begin{equation}\label{MPTEO1a}
b(x,D)a(x,D)={\rm Id}+c(x,D),
\end{equation}
where $c(x,\xi)\in S_{\Lambda}(\Omega)$ is rapidly decreasing in $X_{r\Lambda}(x_0)$ for a suitable $r>0$.
\end{thm}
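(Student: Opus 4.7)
The plan is to build the microlocal parametrix $b(x,D)$ via the standard iterative construction: invert the principal symbol on a $\Lambda$-neighborhood of $\{x_0\}\times X$ and correct the remainders order by order using the composition calculus of Proposition~\ref{PROMM1}.

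First, by Lemma~\ref{MPPROP1} the $m$-microlocal ellipticity of $a$ extends to an open $\Lambda$-neighborhood, giving $\varepsilon_0, C, R > 0$ with $|a(x,\xi)| \geq C m(\xi)$ on $X_{\varepsilon_0 \Lambda}(x_0)$ for $|\xi|>R$. Using Lemma~\ref{LEMRO1}, I pick a cutoff $\tau_0 \in S_\Lambda(\Omega)$ supported in $X_{\varepsilon_0 \Lambda}(x_0)$ and equal to $1$ on $X_{r\Lambda}(x_0)$ for some $0 < r < \varepsilon_0$, and choose $\chi \in C^\infty(\mathbb R^n)$ vanishing for $|\xi|\leq R$, equal to $1$ for $|\xi|\geq R+1$, and satisfying \eqref{GLOBAL}. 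I then set
\begin{equation*}
b_0(x,\xi) := \frac{\tau_0(x,\xi)\chi(\xi)}{a(x,\xi)},
\end{equation*}
(interpreted as $0$ where the numerator vanishes); the pointwise lower bound on $|a|$ on the support of $\tau_0\chi$, together with Leibniz' rule, yield $b_0 \in S_{1/m, \Lambda}(\Omega)$, which I replace by a properly supported representative using Remark~\ref{REMWS2}(6).

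Next, Proposition~\ref{PROMM1}(ii) gives $b_0(x,D)a(x,D) = (\tau_0\chi)(x,D) + r_1(x,D)$, where $r_1 \sim \sum_{|\alpha|\geq 1} \frac{1}{\alpha!}\partial_\xi^\alpha b_0\cdot D_x^\alpha a$; each term obeys bounds of type $\Lambda^{-\alpha} \leq \pi^{-|\alpha|}$, so $r_1 \in S_{\pi^{-1}, \Lambda}(\Omega)$, with $\xi$-support essentially contained in $X_{\varepsilon_0 \Lambda}$. I iterate: given properly supported $b_0,\dots,b_{N-1}$ with $b_k \in S_{\pi^{-k}/m, \Lambda}$ and
\begin{equation*}
\Big(\sum_{k<N} b_k\Big)(x,D)\, a(x,D) = (\tau_0\chi)(x,D) + r_N(x,D), \qquad r_N \in S_{\pi^{-N}, \Lambda}(\Omega),
\end{equation*}
I define $b_N := -\tau_0\chi r_N/a \in S_{\pi^{-N}/m, \Lambda}$; the composition $b_N(x,D) a(x,D)$ then has symbol $-\tau_0 \chi r_N$ to leading order, cancelling $r_N$ wherever $\tau_0 = 1$, plus a remainder one order lower in $\pi$. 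Using Proposition~\ref{PROPWS2}, I obtain $b \in S_{1/m, \Lambda}(\Omega)$ with $b \sim \sum_{k \geq 0} b_k$, and pass to a properly supported representative, still denoted $b(x,D) \in \widetilde{\textup{Op}}\, S_{1/m, \Lambda}(\Omega)$.

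By construction $b(x,D) a(x,D) = \textup{Id} + c(x,D)$ with $c = (\tau_0 \chi - 1) + s$ and $s \in S^{-\infty}(\Omega)$; the symbol $c_0 := \tau_0 - 1 \in S_\Lambda(\Omega)$ vanishes on $X_{r\Lambda}(x_0)$, while $c - c_0 = \tau_0(\chi-1) + s$ has the first summand compactly supported in $\xi$ and hence belongs to $S^{-\infty}(\Omega)$; therefore $c$ is rapidly decreasing in $X_{r\Lambda}(x_0)$ in the sense of Definition~\ref{MPDEF2}. The main obstacle I foresee is controlling the $\xi$-support of the iterated remainders $r_N$: the asymptotic expansion for the composition a priori spreads supports, so I would need to verify at each step that $r_N$ remains, modulo $S^{-\infty}(\Omega)$, supported in $X_{\varepsilon_0 \Lambda}(x_0)$, so that $\tau_0$ really acts as $1$ in the crucial subtraction $-\tau_0 \chi r_N$; this bookkeeping of microlocal support across the calculus is what makes the recursion close cleanly.
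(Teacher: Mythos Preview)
Your overall architecture is the same as the paper's: localize to a $\Lambda$-neighborhood where $a$ is elliptic, invert there, and iterate to kill remainders. The difference is in how the recursion is organized, and this is where your proposal has a genuine gap.

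You set $b_N=-\tau_0\chi r_N/a$ and claim the new remainder $r_{N+1}$ lies in $S_{\pi^{-N-1},\Lambda}(\Omega)$. Computing, one gets
\[
r_{N+1}=(1-\tau_0\chi)\,r_N+s_{N+1},\qquad s_{N+1}\in S_{\pi^{-N-1},\Lambda}(\Omega).
\]
The term $(1-\tau_0\chi)r_N$ is \emph{not} globally in $S_{\pi^{-N-1},\Lambda}$: it only vanishes where $\tau_0\chi=1$, i.e.\ on $X_{r\Lambda}(x_0)\cap\{|\xi|>R+1\}$, while modulo $S^{-\infty}$ it survives on the ``annulus'' $X_{\varepsilon_0\Lambda}(x_0)\setminus X_{r\Lambda}(x_0)$, where it is only in $S_{\pi^{-N},\Lambda}$. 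Consequently the $b_N$ do not decay in order (globally $b_N\in S_{\pi^{-1}/m,\Lambda}$ for all $N\ge 1$), and you cannot invoke Proposition~\ref{PROPWS2} to form $b\sim\sum b_k$. Your last paragraph almost spots this, but note that even if $r_N$ were supported in $X_{\varepsilon_0\Lambda}(x_0)$ modulo $S^{-\infty}$, your $\tau_0$ equals $1$ only on the \emph{smaller} set $X_{r\Lambda}(x_0)$, so the subtraction still fails.

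The paper sidesteps this by writing the recursion at the level of the asymptotic expansion rather than the full operator remainder: it defines
\[
b_{-k}(x,\xi)=-\sum_{0<|\alpha|\le k}\frac{1}{\alpha!}\,\partial_\xi^\alpha b_{-k+|\alpha|}(x,\xi)\,\frac{D_x^\alpha a(x,\xi)}{a(x,\xi)}
\]
on $X_{\varepsilon\Lambda}(x_0)$ and $0$ outside. Each $b_{-k}$ is then \emph{manifestly} supported in $\operatorname{supp}\tau_0$ and manifestly in $S_{\pi^{-k}/m,\Lambda}$, so the asymptotic sum exists and the composition symbol is $\sim\tau_0$ by a telescoping computation. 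If you want to keep your ``correct the remainder'' viewpoint, the clean fix is to replace each $r_N$ by an asymptotic representative $r_N'$ built from the finitely many terms $\frac{1}{\alpha!}\partial_\xi^\alpha b_{N-|\alpha|}D_x^\alpha a$; these are automatically supported in $\operatorname{supp}b_0$, so you can set $b_N=-r_N'/a$ (no extra cutoff needed), and the orders drop as required. That is precisely the paper's recursion in disguise.
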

\begin{proof}
By Proposition \ref{MPPROP1}, there exists $\varepsilon>0$ such that $a(x,\xi)\in\textup{mce}_{m,\Lambda}X_{\varepsilon\Lambda}(x_0)$. Let $\tau_0(x,\xi)$ be a symbol in $S_{\Lambda}(\Omega)$ such that $\tau_0\equiv 1$ on $X_{\varepsilon^\prime\Lambda}(x_0)$, for a suitable $0<\varepsilon^\prime<\varepsilon$, and ${\rm supp}\,\tau_0\subset X_{\varepsilon\Lambda}(x_0)$. We define $b_0(x,\xi)$ by setting
\begin{equation}\label{MPTEO1b}
b_0(x,\xi):=\left\{
\begin{array}{ll}
\displaystyle\frac{\tau_0(x,\xi)}{a(x,\xi)} &\text{for}\quad (x,\xi)\in X_{\varepsilon\Lambda}(x_0),\\
0 &\text{otherwise}\,.
\end{array}
\right.
\end{equation}
Since $a(x,\xi)$ satisfies \eqref{MPPROP1a}, with suitable constants $C, R$, $b_0(x,\xi)$ is a well defined smooth function on the set $\Omega\times\{|\xi|>R\}$. For $k\ge 1$, the functions $b_{-k}(x,\xi)$ are defined recursively on $\Omega\times\{|\xi|>R\}$ by
\begin{equation*}\label{MPTEO1c}
b_{-k}(x,\xi):=\left\{
\begin{array}{ll}
-\sum\limits_{0 <\vert\alpha\vert\leq k}\frac{1}{\alpha !}\partial^\alpha_\xi b_{-k+\vert\alpha\vert}(x,\xi)\dfrac{D^\alpha_x a(x,\xi)}{a(x,\xi)}, &\text{for}\quad (x,\xi)\in X_{\varepsilon\Lambda}(x_0),\\
0 &\text{otherwise}.
\end{array}
\right.
\end{equation*}
The $b_{-k}(x,\xi)$ can be then extended to the whole set $\Omega\times\mathbb R^n$, multiplying them by a smooth cut-off function that vanishes on the set of possible zeroes of the symbol $a(x,\xi)$; for each $k\ge 0$, the extended $b_{-k}$ is a symbol in $S_{\pi^{-k}/m,\Lambda}(\Omega)$. In view of the properties of the symbolic calculus, a symbol $b\in S_{1/m,\Lambda}(\Omega)$ can be chosen in such a way that
\begin{equation*}
b\sim\sum\limits_{k\ge 0}b_{-k}\,,
\end{equation*}
and $b(x,D)$ is a properly supported operator. By construction, the symbol of $b(x,D)a(x,D)$ is equivalent to $\tau_0(x,\xi)$; thus the symbol of $b(x,D)a(x,D)-{\rm Id}$ belongs to $S_{\Lambda}(\Omega)$ and is rapidly decreasing in $X_{r\Lambda}(x_0)$ for $0<r\le\varepsilon^\prime$.
\end{proof}

\begin{defn}\label{DEFLAMBDAF}
A family $\Xi_\Lambda$ of subsets of $\mathbb R^n$ is a $\Lambda$-filter if it is closed with respect to the intersection of any finite number of its elements and moreover:
\begin{eqnarray}
&& X\in \Xi_\Lambda \, \text{and}\,\, X\subset Y,\,\, \text{then}\,\, Y\in \Xi_\Lambda;\label {eqFILTER1}\\
&& \text{for any}\,\, X\in \Xi_\Lambda, \,\, \text{there exists}\,\,\,\, Y\in\Xi_\Lambda\,\,\text{and}\,\, \varepsilon >0\,\, \text{such that}\,\,\,\, Y_{\varepsilon\Lambda}\subset X.\label{eqFILTER2}
\end{eqnarray}
\end{defn}
\begin{defn}\label{MICROREG}
For $X\subset \mathbb R^n$, $x_0\in\Omega$ and $p\in]1,\infty[$ we say that $u\in\mathcal D'(\Omega)$ is microlocally $H^p_m-$regular in $X$ at the point  $x_0\in\Omega$, and write $u\in \textup{mcl}H^{p}_mX(x_0)$, if  there exists a properly supported operator $a(x,D)\in \textup{Op}\,\textup{mce}_{\Lambda}X(x_0)$, such that $a(x,D)u\in H^{p}_{m, {\rm loc}}(\Omega)$.
\end{defn}
\begin{prop}\label{PROPLAMBDAF}
For $u\in\mathcal D'(\Omega)$ and $a(x,\xi)\in S_{m,\Lambda}(\Omega)$, the following families of subsets of $\mathbb R^n$:
\begin{eqnarray}
&&\mathcal W_{m, x_0}^{p}u:=\left\{X\subset\mathbb R^n\, ;\, u\in \textup{mcl} H^{p}_m (\mathbb R^n\setminus X)(x_0)\right\}\,, \quad 1<p<\infty;\label{MPDEF4a}\\
&&\Sigma_{m, x_0}a:=\left\{X\subset\mathbb R^n\, ,\, a(x,\xi)\in \textup{mce}_{m,\Lambda}(\mathbb R^n\setminus X)(x_0)\right\},\label{MPDEF4b}
\end{eqnarray}
are both $\Lambda$-filters.
\end{prop}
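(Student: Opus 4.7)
The plan is to verify the three defining properties of a $\Lambda$-filter in Definition \ref{DEFLAMBDAF} separately for $\Sigma_{m, x_0}a$ and $\mathcal W_{m, x_0}^{p}u$. In each case, the monotonicity condition \eqref{eqFILTER1} is immediate from the definitions, since the lower bound \eqref{ME} on $|a(x_0,\xi)|$ (respectively, the existence of a properly supported $m$-elliptic operator that regularizes $u$) over a set persists when that set is shrunk; so the real work lies in closure under finite intersections and in the $\Lambda$-neighborhood condition \eqref{eqFILTER2}.

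First I would dispatch $\Sigma_{m, x_0}a$. If $X_1, X_2 \in \Sigma_{m, x_0}a$ are witnessed by constants $(c_j, R_j)$ in the bound \eqref{ME}, then on $\mathbb R^n \setminus (X_1 \cap X_2) = (\mathbb R^n \setminus X_1) \cup (\mathbb R^n \setminus X_2)$ the same lower bound holds with constants $(\min\{c_1,c_2\}, \max\{R_1,R_2\})$, so $X_1 \cap X_2 \in \Sigma_{m, x_0}a$. For \eqref{eqFILTER2}, given $X \in \Sigma_{m, x_0}a$, set $Z := \mathbb R^n \setminus X$; by Lemma \ref{MPPROP1}, $a(x,\xi)$ is microlocally $m$-elliptic on $Z_{\varepsilon\Lambda}$ at $x_0$ for some $\varepsilon>0$. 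Then $Y := \mathbb R^n \setminus Z_{\varepsilon\Lambda} \in \Sigma_{m, x_0}a$ and, by \eqref{eqLAMBDA2}, the associated $0 < \varepsilon^\ast < \varepsilon$ yields $Y_{\varepsilon^\ast\Lambda} \subset \mathbb R^n \setminus Z_{\varepsilon^\ast\Lambda} \subset \mathbb R^n \setminus Z = X$.

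For $\mathcal W_{m, x_0}^{p}u$, property \eqref{eqFILTER2} is handled by the identical argument, applied to the zero-order symbol $a(x,\xi)$ that witnesses microlocal $H^p_m$-regularity. The crucial step is closure under intersection. If $X_1, X_2 \in \mathcal W_{m, x_0}^{p}u$ are witnessed by properly supported $a_j(x,D) \in \textup{Op}\,\textup{mce}_{\Lambda}((\mathbb R^n\setminus X_j)(x_0))$ with $a_j(x,D)u \in H^p_{m,\textup{loc}}(\Omega)$, I would introduce
\[
A(x,D) := a_1(x,D)^\ast a_1(x,D) + a_2(x,D)^\ast a_2(x,D).
\]
Proposition \ref{PROMM1} gives that $A(x,D)$ is properly supported with symbol in $S_\Lambda(\Omega)$ whose asymptotic leading term is $|a_1(x,\xi)|^2 + |a_2(x,\xi)|^2$ modulo $S_{\pi^{-1},\Lambda}(\Omega)$. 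Since on $\mathbb R^n \setminus (X_1 \cap X_2)$ at least one of $|a_j(x_0,\xi)|$ is bounded away from zero, $A$ is microlocally elliptic there at $x_0$; and since each adjoint $a_j^\ast$ is zero order and properly supported, Proposition \ref{SOBOLEVCONT} gives $A(x,D)u = a_1^\ast(a_1u) + a_2^\ast(a_2u) \in H^p_{m,\textup{loc}}(\Omega)$, so $X_1 \cap X_2 \in \mathcal W_{m, x_0}^{p}u$.

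The main obstacle I anticipate is the passage from lower-boundedness of the principal symbol of $A(x,D)$ to genuine microlocal ellipticity of the full symbol. One must verify that the $S_{\pi^{-1},\Lambda}(\Omega)$ remainders in the expansion of $a_j^\ast a_j$ are negligible for $|\xi|$ large, which uses $\pi(\xi) \to \infty$ as $|\xi| \to \infty$, a consequence of \eqref{MA} and \eqref{EQWS0}; this is essentially the same quantitative estimate as in the proof of Lemma \ref{MPPROP1}, but it is the only place in the argument where the symbolic calculus is used in a nontrivial way.
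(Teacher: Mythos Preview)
Your proof is correct and matches the paper's approach on the parts the paper makes explicit: monotonicity is immediate, and the $\Lambda$-neighborhood condition \eqref{eqFILTER2} is obtained from Lemma \ref{MPPROP1} combined with \eqref{eqLAMBDA2}, exactly as you write. The paper simply declares closure under finite intersections ``trivial'' for both families and gives no argument; your construction $A(x,D)=a_1(x,D)^\ast a_1(x,D)+a_2(x,D)^\ast a_2(x,D)$ is the standard way to supply a single witnessing operator for $\mathcal W^p_{m,x_0}u$ and cleanly fills in what the paper omits.

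One small correction: the fact that $\pi(\xi)\to\infty$ as $|\xi|\to\infty$ follows from the lower polynomial bound in \eqref{PG}, which gives $\lambda_j(\xi)\ge c\langle\xi\rangle^c$ for every $j$ and hence $\pi(\xi)\ge c\langle\xi\rangle^c$. The M-condition \eqref{MA} you cite only gives $\lambda_j(\xi)\ge c|\xi_j|$, which yields no uniform lower bound on $\min_j\lambda_j(\xi)$ along coordinate hyperplanes. This is purely a reference slip; the estimate you need is available and the absorption of the $S_{\pi^{-1},\Lambda}(\Omega)$ remainder into the elliptic lower bound for large $|\xi|$ goes through as you describe.
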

We refer to the $\Lambda$-filters in the previous Proposition respectively as:
 \begin{itemize}
\item [-] filter of Sobolev singularities of $u\in\mathcal D'(\Omega)$
\item[-] characteristic filter of $a(x,\xi)\in S_{m, \Lambda}(\Omega)$.
\end{itemize}
\begin{proof}
It is trivial that $\mathcal W^p_{m, x_0}u$ and $\Sigma_{m,x_0}a$ are closed with respect to the intersection of finite number of their elements and satisfy \eqref{eqFILTER1}. Take now $X\in \Sigma_{m,x_0}a$. Using Lemma \ref{MPPROP1} we have, for some $\epsilon>0$, $a(x,\xi)$ $m-$microlocally elliptic in $(\mathbb R^n\setminus X)_{\epsilon\Lambda}(x_0)$. Set $Y=\mathbb R^n\setminus(\mathbb R^n\setminus X)_{\epsilon\Lambda}$. By means of \eqref{eqLAMBDA2} we have for some $0<\varepsilon<\epsilon$:
\begin{equation}\label{eqPROPLAMDAF1}
Y_{\varepsilon\Lambda}=(\mathbb R^n\setminus(\mathbb R^n\setminus X)_{\epsilon\Lambda})_{\varepsilon\Lambda}\subset \mathbb R^n\setminus(\mathbb R^n\setminus X)_{\varepsilon\Lambda}\subset X.
\end{equation}
The same can be easily verified for  $\mathcal W^p_{m, x_0}u$ and the proof is then concluded.
\end{proof}
Concerning the filter of Sobolev singularities of a distribution, the following result can be proven by arguing similarly to \cite[Proposition 2.8]{RO1}.
\begin{prop}\label{PROPLAMBDAF1}
The following conditions are equivalent:
\begin{itemize}
\item[{\it a}.] $\emptyset\in\mathcal W^p_{m, x_0}u$;
\item[{\it b}.] there exists $\phi\in C^\infty_0(\Omega)$, with $\phi(x_0)\neq 0$, such that $\phi u\in H^p_m$;
\item[{\it c}.] there exist $X_1, \dots, X_H\subset\mathbb R^n$, with $\bigcup\limits_{h=1}^H X_h=\mathbb R^n$, such that\\
$u\in\textup{mcl} H^p_mX_h(x_0)$ for $h=1,\dots, H$.
\end{itemize}
\end{prop}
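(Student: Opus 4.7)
The plan is to establish \emph{a.}$\Leftrightarrow$\emph{c.} using the $\Lambda$-filter structure of $\mathcal{W}^p_{m,x_0}u$, and \emph{a.}$\Leftrightarrow$\emph{b.} via the microlocal parametrix from Theorem \ref{MPTEO1}.

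The equivalence \emph{a.}$\Leftrightarrow$\emph{c.} is pure bookkeeping. For \emph{a.}$\Rightarrow$\emph{c.} I take $H=1$ and $X_1=\mathbb{R}^n$, so that \emph{a.} reads exactly as $u\in\textup{mcl}H^p_m\mathbb{R}^n(x_0)$. For the converse, the assumption yields $\mathbb{R}^n\setminus X_h\in\mathcal{W}^p_{m,x_0}u$ for every $h=1,\dots,H$, and the closure under finite intersection granted by Proposition \ref{PROPLAMBDAF} gives
$$
\bigcap_{h=1}^{H}(\mathbb{R}^n\setminus X_h)=\mathbb{R}^n\setminus\bigcup_{h=1}^{H}X_h=\emptyset\in\mathcal{W}^p_{m,x_0}u\,.
$$
Similarly, \emph{b.}$\Rightarrow$\emph{a.} reduces to a pure $x$-cutoff: I set $a(x,\xi):=\phi(x)^2\in S_\Lambda(\Omega)$; since $|a(x_0,\xi)|=\phi(x_0)^2>0$, $a\in\textup{mce}_\Lambda\mathbb{R}^n(x_0)$, and $a(x,D)=\phi^2\,\cdot$ is trivially properly supported. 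The assumption $\phi u\in H^p_m$ together with the compactness of $\textup{supp}\,\phi$ places $\phi u\in H^p_{m,\textup{comp}}(\Omega)$, and multiplication by $\phi\in C^\infty_0(\Omega)\subset S_\Lambda(\Omega)$ preserves $H^p_{m,\textup{comp}}(\Omega)$ by Proposition \ref{SOBOLEVCONT}, so $a(x,D)u=\phi\cdot(\phi u)\in H^p_{m,\textup{loc}}(\Omega)$.

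The main content is \emph{a.}$\Rightarrow$\emph{b.} From the hypothesis I fix a properly supported $a(x,D)\in\textup{Op}\,\textup{mce}_\Lambda\mathbb{R}^n(x_0)$ with $a(x,D)u\in H^p_{m,\textup{loc}}(\Omega)$. Theorem \ref{MPTEO1} applied with $X=\mathbb{R}^n$ yields a properly supported $b(x,D)\in\widetilde{\textup{Op}}\,S_\Lambda(\Omega)$ such that
$$
b(x,D)a(x,D)=\textup{Id}+c(x,D)\,,
$$
where $c\in S_\Lambda(\Omega)$ is rapidly decreasing in $\mathbb{R}^n_{r\Lambda}(x_0)=B_r(x_0)\times\mathbb{R}^n$ for some $r>0$; that is, $c\sim c_0$ with $c_0\equiv 0$ on $B_r(x_0)\times\mathbb{R}^n$. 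I then choose $\varphi\in C^\infty_0(B_r(x_0))$ with $\varphi(x_0)\neq 0$ and decompose
$$
\varphi u=\varphi b(x,D)\bigl(a(x,D)u\bigr)-\varphi c(x,D)u\,.
$$
The first summand lies in $H^p_m$, because \eqref{EQSS8} places $b(x,D)(a(x,D)u)\in H^p_{m,\textup{loc}}(\Omega)$ and the cutoff by $\varphi\in C^\infty_0(\Omega)$ promotes it to $H^p_m$. The main obstacle is the second summand: the crucial observation is that $\varphi(x)c_0(x,\xi)\equiv 0$ on $\textup{supp}\,\varphi\times\mathbb{R}^n\subset B_r(x_0)\times\mathbb{R}^n$, hence $\varphi(x)c(x,\xi)=\varphi(x)(c-c_0)(x,\xi)\in S^{-\infty}(\Omega)$; since $c(x,D)=b(x,D)a(x,D)-\textup{Id}$ is properly supported, so is $\varphi c(x,D)=(\varphi c)(x,D)$, which is therefore a properly supported regularizing operator sending $\mathcal{D}'(\Omega)$ into $C^\infty_0(\Omega)\subset H^p_m$. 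Adding the two contributions delivers $\varphi u\in H^p_m$ with $\varphi(x_0)\neq 0$, which is \emph{b.}
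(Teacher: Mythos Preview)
Your proof is correct and follows essentially the same approach as the paper's: the equivalence \emph{a.}$\Leftrightarrow$\emph{c.} via the filter axioms, \emph{b.}$\Rightarrow$\emph{a.} by using the cutoff itself as a microlocally elliptic zero-order symbol, and \emph{a.}$\Rightarrow$\emph{b.} via the microlocal parametrix of Theorem \ref{MPTEO1} together with the observation that $\varphi(x)c(x,\xi)\in S^{-\infty}(\Omega)$. The only cosmetic differences are that the paper uses $\phi$ rather than $\phi^2$ in \emph{b.}$\Rightarrow$\emph{a.}, and handles the regularizing remainder by explicitly inserting a second cutoff $\tilde\phi$ via proper support (writing $\phi c(x,D)u=\phi c(x,D)(\tilde\phi u)$) rather than invoking directly that a properly supported $S^{-\infty}$ operator smooths $\mathcal D'(\Omega)$.
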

\begin{proof}
Assume that $\emptyset\in\mathcal W^p_{m, x_0}u$. Then there exists  $a(x,D)\in\widetilde{\textup{Op}}S_{\Lambda}(\Omega)$, with microlocally elliptic symbol in the set $\{x_0\}\times\mathbb R^n$, such that $a(x,D)u\in H^p_{m, {\rm loc}}(\Omega)$. Thanks to Theorem \ref{MPTEO1}, there also exists $b(x,D)\in\widetilde{\textup{Op}}S_{\Lambda}(\Omega)$ such that
\begin{equation}\label{eq:1}
u=b(x,D)a(x,D)u-c(x,D)u\,,
\end{equation}
where $c(x,D)\in\widetilde{\textup{Op}}S_{\Lambda}(\Omega)$ has rapidly decreasing symbol in a set $B_r(x_0)\times\mathbb R^n$ with a sufficiently small $r>0$. Take a function $\phi\in C^\infty_0(\Omega)$, with ${\rm supp}\,\phi\subset B_r(x_0)$ and $\phi\equiv 1$ on $B_{r/2}(x_0)$, such that
\begin{equation}\label{eq:2}
\phi(x)c(x,\xi)\in S^{-\infty}(\Omega)\,,
\end{equation}
and let $\tilde\phi\in C^\infty_0(\Omega)$ be chosen such that
\begin{equation}\label{eq:3}
\phi c(x,D)u=\phi c(x,D)(\tilde\phi u)\,.
\end{equation}
In view of Proposition \ref{SOBOLEVCONT} (and using in particular that $\phi c(x,D)(\tilde\phi\cdot)$ is a regularizing operator), from the equations \eqref{eq:1}-\eqref{eq:3} we get
\begin{equation}
\phi u=\phi b(x,D)a(x,D)u-\phi c(x,D)(\tilde\phi u)\in H^p_m\,,
\end{equation}
hence condition $b$ is satisfied.
\newline
Conversely, assume that  $\phi u\in H^p_m$, for a suitable function $\phi\in C^\infty_0(\Omega)$, $\phi(x_0)\neq 0$; then we have that $(a.)$ holds true by considering the function $\phi(x)$ itself as a symbol in $S_\Lambda(\Omega)$ microlocally elliptic in $\{x_0\}\times\mathbb R^n$.
\newline
The equivalence between conditions $a$ and $c$ follows at once from the general properties of the filters.
\end{proof}
We can then state the main result of this section:
\begin{thm}\label{MAINTHM}
For $m$, $m'$ arbitrary admissible weights, associated to the same  weight vector $\Lambda$, consider $a(x,D)\in\widetilde{\textup{Op}}\, S_{m,\Lambda}(\Omega)$, $x_0\in\Omega$, $p\in ]1,\infty[$. Then for any $u\in\mathcal D'(\Omega)$ we have:
\begin{equation}\label{eqMAINTHM1}
\mathcal W^p_{m'/m, x_0}a(x,D)u\cap \Sigma_{x_0}a\subset \mathcal W^p_{m', x_0} u\subset \mathcal W^p_{m'/m, x_0}a(x,D)u.
\end{equation}
\end{thm}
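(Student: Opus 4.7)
The plan is to prove both inclusions by constructing, in each case, a properly supported zero-order operator $\beta(x,D) \in \widetilde{\textup{Op}}\, S_\Lambda(\Omega)$ that is microlocally elliptic in $(\mathbb{R}^n \setminus X)(x_0)$, and then checking that either $\beta(x,D) u$ or $\beta(x,D) a(x,D) u$ lies in the required Sobolev space. The candidate $\beta$ is built via Lemma \ref{LEMRO1} in the form $\beta(x,\xi) = \chi(x) \sigma(\xi)$, where $\chi \in C^\infty_0(\Omega)$ equals $1$ on a small ball centered at $x_0$ and $\sigma$ equals $1$ on a $\Lambda$-neighborhood of $\mathbb{R}^n \setminus X$ and is supported in a slightly larger one. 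The radii will be chosen small enough that $\textup{supp}\, \beta$ is contained in the region where each remainder symbol appearing below vanishes, so that all compositions of $\beta(x,D)$ with these remainders are in $S^{-\infty}(\Omega)$.

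For the right-hand inclusion, assume $X \in \mathcal{W}^p_{m', x_0} u$, so some properly supported $\alpha(x,D) \in \widetilde{\textup{Op}}\, \textup{mce}_\Lambda(\mathbb{R}^n \setminus X)(x_0)$ satisfies $\alpha(x,D) u \in H^p_{m', \textup{loc}}(\Omega)$. Theorem \ref{MPTEO1} provides a parametrix $\alpha^\sharp(x,D) \in \widetilde{\textup{Op}}\, S_\Lambda(\Omega)$ with $\alpha^\sharp \alpha = \textup{Id} + c$, where $c \in S_\Lambda(\Omega)$ is rapidly decreasing in some $(\mathbb{R}^n \setminus X)_{r\Lambda}(x_0)$. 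Writing $u = \alpha^\sharp \alpha u - cu$ and applying $\beta(x,D) a(x,D)$ yields
\[
\beta(x,D) a(x,D) u = \beta(x,D) a(x,D) \alpha^\sharp(x,D)\bigl(\alpha(x,D) u\bigr) - \beta(x,D) a(x,D) c(x,D) u.
\]
The first summand lies in $H^p_{m'/m, \textup{loc}}(\Omega)$ by Proposition \ref{SOBOLEVCONT}, since $\beta a \alpha^\sharp \in \widetilde{\textup{Op}}\, S_{m, \Lambda}(\Omega)$; the second is smoothing by the support choice of $\beta$, hence in $C^\infty(\Omega)$. Thus $X \in \mathcal{W}^p_{m'/m, x_0} a(x,D) u$.

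For the left-hand inclusion, let $X \in \mathcal{W}^p_{m'/m, x_0} a(x,D) u \cap \Sigma_{m, x_0} a$, and fix $\gamma(x,D) \in \widetilde{\textup{Op}}\, \textup{mce}_\Lambda(\mathbb{R}^n \setminus X)(x_0)$ with $\gamma(x,D) a(x,D) u \in H^p_{m'/m, \textup{loc}}(\Omega)$. Theorem \ref{MPTEO1} applied twice yields $b(x,D) \in \widetilde{\textup{Op}}\, S_{1/m, \Lambda}(\Omega)$ with $b a = \textup{Id} + c$ and $\gamma^\sharp(x,D) \in \widetilde{\textup{Op}}\, S_\Lambda(\Omega)$ with $\gamma^\sharp \gamma = \textup{Id} + c'$, both remainders rapidly decreasing in small $\Lambda$-neighborhoods of $\mathbb{R}^n \setminus X$ at $x_0$. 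Substituting $a(x,D) u = \gamma^\sharp \gamma a(x,D) u - c' a(x,D) u$ into $u = b\, a(x,D) u - c u$ gives
\[
\beta(x,D) u = \beta(x,D) b(x,D) \gamma^\sharp(x,D)\bigl(\gamma(x,D) a(x,D) u\bigr) - \beta(x,D) b(x,D) c'(x,D) a(x,D) u - \beta(x,D) c(x,D) u.
\]
Since $\beta b \gamma^\sharp \in \widetilde{\textup{Op}}\, S_{1/m, \Lambda}(\Omega)$, the first term belongs to $H^p_{m', \textup{loc}}(\Omega)$ by Proposition \ref{SOBOLEVCONT}; the other two are smoothing by the support choice of $\beta$. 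Hence $\beta(x,D) u \in H^p_{m', \textup{loc}}(\Omega)$, giving $X \in \mathcal{W}^p_{m', x_0} u$.

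The main technical point is justifying the smoothing claims: I must choose the radii defining $\textup{supp}\, \beta$ small enough, using the $\Lambda$-neighborhood nesting properties \eqref{eqLAMBDA1}-\eqref{eqLAMBDA2}, that $\textup{supp}\, \beta$ lies strictly inside the open set where each of the remainder symbols $c, c'$ vanishes. Then in the asymptotic expansion of each composition $\beta \,\#\,(\dots)\,\#\, c_\bullet$ provided by Proposition \ref{PROMM1}, every term $\partial_\xi^\alpha \beta \cdot D_x^\alpha(\dots)$ vanishes identically, so the composition is in $S^{-\infty}(\Omega)$ and maps $\mathcal{D}'(\Omega)$ into $C^\infty(\Omega)$.
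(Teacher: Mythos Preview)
Your proposal is correct and follows essentially the same route as the paper, which organizes the two inclusions as Propositions \ref{cont1} and \ref{cont2}: build a localizing zero-order symbol (the paper's $\tau_0$, your $\beta$) via Lemma \ref{LEMRO1}, apply the microlocal parametrix of Theorem \ref{MPTEO1}, and decompose exactly as you do. One small technical point to tighten: the operator with symbol $\chi(x)\sigma(\xi)$ is not in general properly supported, so (as the paper does) you should pass to a properly supported $\tau(x,D)$ with $\tau-\beta\in S^{-\infty}(\Omega)$ before invoking Definition \ref{MICROREG}; this does not affect the smoothing arguments, which go through as you describe.
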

The proof directly follows from two next propositions.
\begin{prop}\label{cont1}
Let $x_0\in\Omega$, $X\subset\mathbb{R}^n$, $a(x,D)\in\widetilde{\textup{Op}}S_{m,\Lambda}(\Omega)$ be given. Then for $p\in]1,\infty[$ and $u\in \textup{mcl}H^p_{m^\prime}X(x_0)$ one has $a(x,D)u\in \textup{mcl}H^{p}_{m^\prime/m}X(x_0)$.
\end{prop}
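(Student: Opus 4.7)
The plan is to pull back the microlocal hypothesis on $u$ through a microlocal parametrix provided by Theorem \ref{MPTEO1} and then push it forward through $a(x,D)$ using the Sobolev mapping properties of Proposition \ref{SOBOLEVCONT}. First I would unpack the hypothesis: let $b(x,D)\in\widetilde{\textup{Op}}\,\textup{mce}_\Lambda X(x_0)$ be a properly supported zero-order operator with $b(x,D)u\in H^p_{m',\textup{loc}}(\Omega)$. Applying Theorem \ref{MPTEO1} to $b(x,\xi)$ yields a properly supported $q(x,D)\in\widetilde{\textup{Op}}S_\Lambda(\Omega)$ such that
\begin{equation*}
q(x,D)b(x,D)=\textup{Id}+e(x,D),
\end{equation*}
with $e\in S_\Lambda(\Omega)$ rapidly decreasing on some $X_{r\Lambda}(x_0)$, $r>0$. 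Using Lemma \ref{LEMRO1} together with Remark \ref{REMWS2}(6), I would then fix a properly supported $c(x,D)$ with symbol $c(x,\xi)\in S_\Lambda(\Omega)$ identically equal to $1$ on $X_{r_1\Lambda}(x_0)$ and supported in $X_{r_2\Lambda}(x_0)$, for some $0<r_1<r_2<r$; such a $c(x,D)$ lies in $\widetilde{\textup{Op}}\,\textup{mce}_\Lambda X(x_0)$ and is the natural candidate to test the microlocal regularity of $a(x,D)u$.

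Next I would insert the parametrix identity $u=q(x,D)b(x,D)u-e(x,D)u$ and split
\begin{equation*}
c(x,D)a(x,D)u=\bigl[c(x,D)a(x,D)q(x,D)\bigr]b(x,D)u-\bigl[c(x,D)a(x,D)e(x,D)\bigr]u.
\end{equation*}
By Proposition \ref{PROMM1} the operator $c(x,D)a(x,D)q(x,D)$ is properly supported with symbol in $S_{m,\Lambda}(\Omega)$, so Proposition \ref{SOBOLEVCONT} sends $b(x,D)u\in H^p_{m',\textup{loc}}(\Omega)$ into $H^p_{m'/m,\textup{loc}}(\Omega)$, taking care of the first summand.

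The crux is showing that $c(x,D)a(x,D)e(x,D)$ is regularizing. Let $e_0$ be a representative of $e$ vanishing identically on the open set $X_{r\Lambda}(x_0)$; all $x$- and $\xi$-derivatives of $e_0$ vanish there too. By the composition formula \eqref{EQWS10}, the symbol of $a(x,D)e(x,D)$ is equivalent modulo $S^{-\infty}(\Omega)$ to an asymptotic sum each of whose terms contains a factor $D^\alpha_x e_0$, and so itself vanishes on $X_{r\Lambda}(x_0)$. Composing once more with $c(x,D)$ and using $\textup{supp}\,c\subset X_{r_2\Lambda}(x_0)\subset X_{r\Lambda}(x_0)$, every term of the resulting asymptotic expansion is identically zero on $\Omega\times\mathbb R^n$: zero on $\textup{supp}\,c$ by the vanishing just noted, zero off it because $c\equiv 0$. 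Hence the total symbol of $c(x,D)a(x,D)e(x,D)$ lies in $S^{-\infty}(\Omega)$, so the second summand is smooth and in particular belongs to $H^p_{m'/m,\textup{loc}}(\Omega)$.

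The main obstacle is precisely this bookkeeping: one must nest the three $\Lambda$-neighborhoods $X\subset X_{r_1\Lambda}\subset X_{r_2\Lambda}\subset X_{r\Lambda}$ so that microellipticity of $c$ at $X(x_0)$, rapid decrease of $e$, and the containment of $\textup{supp}\,c$ inside the vanishing set of $e_0$ coexist. This is exactly where Lemma \ref{LEMRO1} and the $\Lambda$-filter property \eqref{eqLAMBDA1} intervene; once the nesting is arranged, the rest of the proof collapses into the splitting above and the continuity statement of Proposition \ref{SOBOLEVCONT}.
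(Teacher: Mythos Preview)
Your proof is correct and follows essentially the same route as the paper's: extract $b(x,D)$ from the hypothesis, invert it microlocally via Theorem \ref{MPTEO1}, build a cutoff test operator supported inside the region where the remainder is rapidly decreasing, split $c(x,D)a(x,D)u$ into a main term handled by Proposition \ref{SOBOLEVCONT} and a remainder term that is smoothing. The only cosmetic difference is that the paper outsources the verification that $\tau(x,D)a(x,D)\rho(x,D)$ is regularizing to \cite[Theorem 2]{GM2} using the separation property \eqref{eqLAMBDA2}, whereas you argue it directly from the triple asymptotic expansion and the simple nesting $X_{r_2\Lambda}\subset X_{r\Lambda}$; note that your appeal to \eqref{eqLAMBDA1} is in fact superfluous for your version of the argument, and that the phrase ``every term is identically zero'' should more precisely read ``every term lies in $S^{-\infty}(\Omega)$'', since the intermediate symbol of $a(x,D)e_0(x,D)$ is only rapidly decreasing (not literally vanishing) on $X_{r\Lambda}(x_0)$.
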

\begin{proof}
From Definition \ref{MICROREG}, there exists $b(x,D)\in\widetilde{\textup{Op}}S_{\Lambda}(\Omega)$, with $m-$microlocally elliptic symbol, such that $b(x,D)u\in H^p_{m^\prime, {\rm loc}}(\Omega)$. From Theorem \ref{MPTEO1} there also exists an operator $c(x,D)\in \widetilde{{\textup{Op}}}S_{\Lambda}(\Omega)$ such that
\begin{equation}\label{conteqn1}
c(x,D)b(x,D)={\rm Id}+\rho(x,D)\,,
\end{equation}
where $\rho(x,\xi)\in S_{\Lambda}(\Omega)$ is rapidly decreasing in $X_{r\Lambda}(x_0)$ for some $0<r<1$. Let $r^\ast>0$ be such that
\begin{equation}\label{intorni}
(\mathbb R^n\setminus X_{r\Lambda})_{r^\ast\Lambda}\subset\mathbb R^n\setminus X_{r^\ast\Lambda}\,,\qquad 0<r^\ast<r\,,
\end{equation}
and take a symbol $\tau_0(x,\xi)\in S_{\Lambda}(\Omega)$ satisfying
\begin{equation*}
{\rm supp}\,\tau_0\subset X_{r^\ast\Lambda}(x_0)\,,\qquad\tau_0\equiv 1\,\,{\rm on}\,\,X_{r^\prime\Lambda}(x_0)\,,
\end{equation*}
with a suitable $0<r^\prime<r^\ast$. Finally, let $\tau(x,\xi)$ be a symbol such that $\theta_0(x,\xi):=\tau(x,\xi)-\tau_0(x,\xi)\in S^{-\infty}(\Omega)$ and $\tau(x,D)\in\widetilde{\textup{Op}}S_{\Lambda}(\Omega)$. One can check (see \cite{GM2} for details) that $\tau(x,\xi)$ is $m-$microlocally elliptic in $X$ at $x_0$; in particular, $\tau(x,\xi)=\theta_0(x,\xi)\in S^{-\infty}(\Omega)$ for $(x,\xi)\notin X_{r^\ast\Lambda}(x_0)$.
\newline
Arguing as in the proof of \cite[Theorem 2]{GM2}, from \eqref{conteqn1} we write
\begin{equation*}\label{conteqn2}
\tau(x,D)a(x,D)u=\tau(x,D)a(x,D)c(x,D)(b(x,D)u)-\tau(x,D)a(x,D)\rho(x,D)u\,.
\end{equation*}
Since $\tau(x,D)a(x,D)c(x,D)\in\widetilde{\textup{Op}}S_{m,\Lambda}(\Omega)$ and $b(x,D)u\in H^p_{m^\prime, {\rm loc}}(\Omega)$ then we get  $\tau(x,D)a(x,D)c(x,D)(b(x,D)u)\in H^p_{m^\prime/m, {\rm loc}}(\Omega)$. Moreover, it can be shown that in view of \eqref{intorni}
\begin{equation*}
\varphi(x)\tau(x,D)a(x,D)\rho(x,D)u\in C^\infty_0(\Omega)\subset H^p_{m^\prime/m}\,,
\end{equation*}
for every $\varphi\in C^\infty_0(\Omega)$, so that $\tau(x,D)a(x,D)\rho(x,D)u\in H^p_{m^\prime/m, {\rm loc}}(\Omega)$ (for the details see \cite[Theorem 2]{GM2}).
\newline
This proves that $\tau(x,D)a(x,D)u\in H^p_{m^\prime/m, {\rm loc}}(\Omega)$ and ends the proof.
\end{proof}
\begin{prop}\label{cont2}
For $x_0\in\Omega$, $X\subset\mathbb{R}^n$, let the symbol of $a(x,D)\in\widetilde{\textup{Op}}S_{m,\Lambda}(\Omega)$ be $m-$microlocally elliptic in $X$ at the point $x_0$. Then for every $p\in]1,\infty[$ and $u\in\mathcal{D}'(\Omega)$ such that $a(x,D)u\in\textup{mcl}H^{p}_{m^\prime/m}X(x_0)$ one has $u\in \textup{mcl}H^{p}_{m^\prime}X(x_0)$.
\end{prop}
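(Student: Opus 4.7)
The plan is to mimic Proposition \ref{cont1}: combine a microlocal parametrix of $a(x,D)$ near $X$ with the assumed microlocal regularity of $a(x,D)u$, and read off the regularity of $u$ by testing it against a suitable zero-order cut-off $\tau$ that is $m$-microlocally elliptic in $X$ at $x_0$.

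First, since $a(x,\xi)$ is $m$-microlocally elliptic in $X$ at $x_0$, Theorem \ref{MPTEO1} produces a properly supported $b(x,D)\in\widetilde{\textup{Op}}\,S_{1/m,\Lambda}(\Omega)$ with $b(x,D)a(x,D)=\textup{Id}+c(x,D)$, where $c(x,\xi)\in S_{\Lambda}(\Omega)$ is rapidly decreasing in $X_{r\Lambda}(x_0)$ for some $r>0$. The hypothesis $a(x,D)u\in\textup{mcl}H^p_{m'/m}X(x_0)$ supplies a properly supported zero-order $e(x,D)\in\widetilde{\textup{Op}}\,\textup{mce}_{\Lambda}X(x_0)$ with $e(x,D)a(x,D)u\in H^p_{m'/m,\textup{loc}}(\Omega)$. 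Applying Theorem \ref{MPTEO1} again, now to $e$, one obtains a properly supported $f(x,D)\in\widetilde{\textup{Op}}\,S_{\Lambda}(\Omega)$ with $f(x,D)e(x,D)=\textup{Id}+c_1(x,D)$ and $c_1\in S_\Lambda(\Omega)$ rapidly decreasing in some $X_{r_1\Lambda}(x_0)$. Lemma \ref{LEMRO1} together with Remark \ref{REMWS2} (item 6) then furnishes a properly supported $\tau(x,D)$ whose symbol $\tau(x,\xi)\in S_{\Lambda}(\Omega)$ equals $1$ on a $\Lambda$-neighborhood of $\{x_0\}\times X$ and is supported in $X_{r^\ast\Lambda}(x_0)$ for some $r^\ast<\min\{r,r_1\}$; in particular $\tau\in\textup{mce}_{\Lambda}X(x_0)$.

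Chaining the two parametrix identities yields
\begin{equation*}
\tau(x,D)u=\tau(x,D)b(x,D)f(x,D)\bigl[e(x,D)a(x,D)u\bigr]-R_1(x,D)u-R_2(x,D)u,
\end{equation*}
with $R_1:=\tau(x,D)b(x,D)c_1(x,D)a(x,D)$ and $R_2:=\tau(x,D)c(x,D)$. For the main term, $\tau bf\in\widetilde{\textup{Op}}\,S_{1/m,\Lambda}(\Omega)$ and $e(x,D)a(x,D)u\in H^p_{m'/m,\textup{loc}}(\Omega)$, hence Proposition \ref{SOBOLEVCONT} gives $\tau(x,D)b(x,D)f(x,D)\bigl[e(x,D)a(x,D)u\bigr]\in H^p_{m',\textup{loc}}(\Omega)$.

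The remainders $R_1,R_2$ are the technical crux, and the argument runs in parallel with the remainder estimate in Proposition \ref{cont1}. Each term of the asymptotic composition expansion \eqref{EQWS10} for $\tau\circ b$ carries a factor $\partial^\alpha_\xi\tau$, so the symbol of $\tau\circ b$ is supported in $\textup{supp}\,\tau\subset X_{r^\ast\Lambda}(x_0)$ modulo $S^{-\infty}(\Omega)$. Iterating the composition with $c_1$ (respectively $c$), which vanishes to all orders throughout $X_{r_1\Lambda}(x_0)\supset X_{r^\ast\Lambda}(x_0)$ (respectively $X_{r\Lambda}(x_0)\supset X_{r^\ast\Lambda}(x_0)$), kills every term in the expansion of the symbol of $R_j$, so $R_j\in\widetilde{\textup{Op}}\,S^{-\infty}(\Omega)$. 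Being properly supported and smoothing, $R_j(x,D)u\in C^\infty(\Omega)\subset H^p_{m',\textup{loc}}(\Omega)$. Combining the three contributions one concludes $\tau(x,D)u\in H^p_{m',\textup{loc}}(\Omega)$, and since $\tau\in\textup{mce}_{\Lambda}X(x_0)$ this is exactly $u\in\textup{mcl}H^p_{m'}X(x_0)$. The main obstacle is the symbol-support bookkeeping that makes $R_1,R_2$ smoothing: once the radii are arranged so that $\textup{supp}\,\tau$ is strictly engulfed by the regions of rapid decay of both $c$ and $c_1$, the argument reduces to an iterated application of the template used in Proposition \ref{cont1}.
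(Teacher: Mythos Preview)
Your argument is correct and follows essentially the same route as the paper's proof: two microlocal parametrices (one for the elliptic operator $a$, one for the regularity witness $e$), a cut-off $\tau$ supported inside the common region of rapid decay, and the decomposition of $\tau(x,D)u$ into a main term handled by Proposition~\ref{SOBOLEVCONT} plus two remainders shown to be smoothing via the support bookkeeping of Proposition~\ref{cont1}. Up to renaming---your $b,c,e,f,c_1$ are the paper's $q,\sigma,b,c,\rho$---the identity you write is exactly \eqref{cont2eqn3}.
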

\begin{proof}
From Definition \ref{MICROREG}, there exists $b(x,D)\in\widetilde{\textup{Op}}S_{\Lambda}(\Omega)$ $m-$microlocally elliptic in $X$ at $x_0$ such that
\begin{equation}\label{cont2eqn1}
b(x,D)a(x,D)u\in H^p_{m^\prime/m, {\rm loc}}(\Omega)\,.
\end{equation}
From Theorem \ref{MPTEO1} there exist $c(x,D)\in\widetilde{\textup{Op}}S_{\Lambda}(\Omega)$ and $q(x,D)\in\widetilde{\textup{Op}}S_{1/m,\Lambda}(\Omega)$  such that
\begin{equation}\label{cont2eqn2}
c(x,D)b(x,D)=\text{Id}+\rho(x,D)\,,\qquad q(x,D)a(x,D)=\text{Id}+\sigma(x,D)\,,
\end{equation}
with $\rho(x,\xi), \sigma(x,\xi)\in S^{-\infty}(\Omega)$ rapidly decreasing in $X_{r\Lambda}(x_0)$ for a suitable $0<r<1$.
\newline
Let the symbols $\tau_0(x,\xi), \tau(x,\xi)\in S_{\Lambda}(\Omega)$ be constructed as in the proof of Proposition \ref{cont1}. It can be proved that $\tau(x,D)u\in H^p_{m, {\rm loc}}(\Omega)$, by writing
\begin{equation}\label{cont2eqn3}
\begin{array}{ll}
\tau(x,D)u=\tau(x,D)q(x,D)c(x,D)\left(b(x,D)a(x,D)u\right)\\
\\
\quad -\tau(x,D)q(x,D)\rho(x,D)a(x,D)u-\tau(x,D)\sigma(x,D)u\,,
\end{array}
\end{equation}
where the identities \eqref{cont2eqn2} have been used, and applying similar arguments as in the proof of Proposition \ref{cont1} (see also \cite[Theorem 3]{GM2}).
\end{proof}
\section{Some examples}\label{EX}
For  $M=(1,2)$ we can  define in $\mathbb R^2$  the \textit{quasi-homogeneous} weight function $\langle \xi\rangle_{M}=\left(1+\xi_1^2+\xi^4_2 \right)^{1/2}$ and the weight vector $\Lambda_{M}(\xi)=\left(\langle \xi\rangle_{M}, \langle\xi\rangle_{M}^{1/2}\right)$. For simplicity of notation we set $H^p_{\langle\cdot\rangle_M^s}=H^{s, p}_M$, $s\in \mathbb R$.
\begin{ex}\label{EXEX1}
Let us introduce the following  operators
\begin{eqnarray}
&P(x,\partial)=x_1\partial_{x_1}-\partial_{x^2_2} &\text{with symbol}\quad  p(x,\xi)=ix_1\xi_1+\xi_2^2; \label{eqEX1}\\
&Q(x,\partial)= x_1\partial_{x_1}+ i\partial_{x_1}-\partial_{x_2^2}  &\text{with symbol}\quad  q(x,\xi)=ix_1\xi_1-\xi_1+\xi_2^2.\label{eqEX2}
\end{eqnarray}
Let us set $\Omega:=\mathbb R^2\setminus\{x_1=0\}$. As operators in  $\textup{Op}\,S_{\langle \cdot\rangle_M, \Lambda_M}(\Omega)$, $P(x,\partial)$ and $Q(x,\partial)$ are $m$-elliptic with respect to the weight function $m(\xi)=\langle \xi\rangle_M$; we say in this case that they are  $M$- elliptic. Thus for both the operators the  minimal and maximal extensions  in $L^p_\textup{loc}(\Omega)$ coincide and their domain is the local Sobolev space $H^{1,p}_{M,\textup{loc}}(\Omega)$.\\
As operator in $S_{\langle \cdot\rangle_M, \Lambda_M}(\mathbb R^2)$, $Q(x,\partial)$ is $M-$microlocally elliptic at every point $x^0=(0, x^0_2)$, with arbitrary $x^0_2\in\mathbb R$, in any set of the family
\begin{equation}\label{eqEX2.1}
X_k=\left\{(\xi_1,\xi_2)\in \mathbb R^2\,\,;\,\, \xi_1<(1-k)\xi_2^2\,\,\textup{or}\,\, \xi_1>\frac{1}{1-k}\xi_2^2 \right\},\quad 0<k<1,
\end{equation}
and we write $Q(x,\partial)\in \textup{mqe}\, X_k(x^0)$.
Thus the characteristic filter $\Sigma_{x^0}q$ admits as base of filter the family of sets
\begin{equation}\label{eqEX3}
(\mathbb R^2\setminus X_k)=\left\{ (1-k)\xi_2^2\leq\xi_1\leq\frac1{1-k}\xi_2^2\right\}, \quad 0<k<1.
\end{equation}
Applying  Theorem \ref{MAINTHM} we obtain that any solution to the equation  $Q(x,\partial)u=f$, with $f\in H^{s,p}_{M, \textup{loc}}(\mathbb R^2)$ is microlocally $H^{s+1,p}_M$ regular in any set $\{x^0\}\times X_k$, with $x^0=(0,x^0_2)$ and $0<k<1$.

Notice now that $p(x,\xi)$, $q(x,\xi)$ may be considered  as \textit{principal symbols} of $P(x,\partial)$, $Q(x,\partial)$ with respect to the quasi-homogeneous weight $\langle\cdot\rangle_M$. Introducing now the $M$-characteristic set of $Q(x,\partial)$:
\begin{equation}\label{eqEX4}
\textup{Char}_M Q=\left\{ (x,\xi)\in\mathbb R^2_x\times\mathbb R^2_\xi\setminus\{0\}\, ,\, q(x,\xi)=0\right\},
\end{equation}
we have 
\begin{equation*}\textup{Char}_M Q=\left\{ (0,x_2,\xi_1,\xi_2);\,\,\, x_2\in\mathbb R\,,\,\,\xi_1=\xi_2^2\,,\,\,\xi_2\neq 0 \right\}=\{0\}\times\mathbb R\times\bigcap\limits_{0<k<1}(\mathbb R^2\setminus X_k).
\end{equation*}

Let us observe at the end that any set $X_k$ is $M$-conic, that is for any $\xi\in X_k$ and $t>0$ we have $t^{1/M}\xi:=(t\xi_1, t^{1/2}\xi_2)\in X_k$. The same clearly holds for $\mathbb R^2\setminus X_k$ and Char$_{M} Q$, agreeing with \cite [Definition 4.3]{GM3}.

About the operator $P(x,\partial)$ we can notice that for $x^0=(0,x^0_2)$, with an arbitrary $x^0_2\in\mathbb R$, $p(x^0,\xi)=0$ if and only if $\xi_2=0$; thus the $M$-characteristic set Char$_M P=\{(0, x_2, \xi_1, 0);\,\,\,\xi_1\neq 0\}$ coincides with the classical (conic) characteristic set Char $P$.
\end{ex}

Notice moreover that the heat operator $\partial_{x_1}-\partial_{x_2^2}$ is clearly $M$-elliptic, as operator in $S_{\langle\cdot \rangle_M,\Lambda_M}(\mathbb R^2)$, while the Schroedinger operator $i\partial _{x_1}-\partial_{x_2^2}$ is $M-$microlocally elliptic at any point $x_0\in \mathbb R^2$ in the family of set expressed in \eqref{eqEX2.1}.
\smallskip

Let us define now the positive function in $\mathbb R^2$
\begin{equation}\label{eqEX5}
\lambda(\xi):=(1+\xi_1^6+\xi_1^4\xi_2^4+\xi_2^6)^{1/2},
\end{equation}
which may be considered  as \textit{multi-quasi-homogeneous} weight in Example \ref{EXWS1} (2). Precisely $\lambda(\xi)=\left(\sum_{\alpha\in V(\mathcal P)}\xi^{2\alpha}\right)^{1/2}$, where $V(\mathcal P)=\left\{(0,0), (3,0), (2,2), (0,3)   \right\}$ are the vertices of a complete Newton polyhedron. Roughly speaking the normal vector to any face of $\mathcal P$ not laying on the coordinate axes has not zero components, see \cite[\S 1.1]{BBR}, \cite{GM1}. Moreover again from \cite[\S 1.1]{BBR}, we have that the formal order of $\lambda(\xi)$ is $\mu=6$. Then $\Lambda(\xi)=\left(\lambda(\xi)^{1/6}+\vert\xi_1\vert, \lambda(\xi)^{1/6}+\vert\xi_2\vert\right)$ is a weight vector and, for any $r\in\mathbb R$, $\lambda(\xi)^r$ is an admissible weight, see example \ref{EXWS1} (2).
\begin{ex}\label{EXEX2}
Consider the linear partial differential operators in $\textup{Op}\, S_{\lambda, \Lambda}(\mathbb R^2)$:
\begin{eqnarray}
&&A(x, \partial)=(x_1\partial_{x_1}-\partial_{x_2^2})(x_2\partial_{x_2}-\partial_{x_1^2})\label{eqEX6},\\
&& B(x,\partial)= (x_1\partial _{x_1}+i\partial_{x_1}-\partial_{x_2^2})(x_2\partial_{x_2}+i\partial_{x_2}-\partial_{x_1^2}).\label{eqEX8}
\end{eqnarray}
According to the rules of the symbolic calculus (see Proposition \ref{PROMM1}), their symbols $a(x,\xi)$ and $b(x,\xi)$ can be written in the form
\begin{eqnarray}
&&a(x,\xi)=(ix_1\xi_{1}+\xi_2^2)(ix_2\xi_{2}+\xi_{1}^2)+2\xi_2^2\label{eqEX7},\\
&& b(x,\xi)=(ix_1\xi_{1}-\xi_{1}+\xi_{2}^2)(ix_2\xi_{2}-\xi_{2}+\xi_{1}^2)+2\xi_2^2.\label{eqEX9}
\end{eqnarray}
The term $2\xi_2^2$ that appears in the right-hand side of both formulas \eqref{eqEX7}, \eqref{eqEX9} behaves as a lower order symbol with respect to the weight function \eqref{eqEX5}. Hence from the ellipticity properties of $P(x,\partial)$ and $Q(x,\partial)$ collected in the example \ref{EXEX1}, we find that $A(x,\partial)$ and $B(x, \partial)$ are $\lambda$-elliptic as operators in Op$S_{\lambda, \Lambda}(\Omega)$, where we have set $\Omega:=\mathbb R^2\setminus\bigcup\limits_{j=1,2}\{x_j=0\}$. Applying Theorem \ref{ADN} we obtain in both cases that the minimal and maximal extensions of those operators in $L^p_{\textup{loc}}(\Omega)$ coincide, with domain given by $H^p_{\lambda, \textup{loc}}(\Omega)$.

As operators in Op$S_{\lambda,\Lambda}(\mathbb R^2)$, $A(x,\partial)$ and $B(x,\partial)$ fail to be $\lambda-$elliptic at the points of the coordinate axes. Using again the results about the operator $Q(x,\partial)$ given in the example \ref{EXEX1}, the behavior of the operator $B(x,\partial)$ along the coordinate axes can be summarized as follows.
\begin{itemize}
\item[i.] At any point $x^0=(0,x^0_2)$ (with an arbitrary $x^0_2\neq 0$), the symbol of $B(x,\partial)$ reduces to $b(x^0,\xi)=(-\xi_{1}+\xi_{2}^2)(ix^0_2\xi_{2}-\xi_{2}+\xi_{1}^2)+2\xi_2^2$, hence it is $\lambda-$microlocally elliptic in all sets $X_k$ of the type \eqref{eqEX2.1} with arbitrary $0<k<1$. This also means that the characteristic filter $\Sigma_{x^0}b$ admits as base the family of sets:
\begin{equation*}
\left\{ \xi\in\mathbb R^2\quad ;\quad (1-k)\xi_2^2\leq\xi_1\leq \frac{1}{1-k}\xi_2^2\right\}_{0<k<1}\,.
\end{equation*}
\end{itemize}
Arguing similarly on the other points of the coordinate axes, we obtain that:
\begin{itemize}
\item[ii.] at any point $y^0=(y^0_1,0)$ (with an arbitrary $y^0_1\neq 0$), the characteristic filter $\Sigma_{y^0}b$ admits as base the family of sets:
\begin{equation*}
\left\{ \xi\in\mathbb R^2\quad ;\quad (1-k)\xi_1^2\leq\xi_2\leq \frac{1}{1-k}\xi_1^2\right\}_{0<k<1}\,;
\end{equation*}
\item[iii.] at the origin ${\bf 0}=(0,0)$, the characteristic filter $\Sigma_{\bf 0}b$ admits as base the family of sets:
\begin{equation*}
\left\{ \xi\in\mathbb R^2\quad ;\quad
\begin{array}{l}
 (1-k)\xi_2^2\leq\xi_1\leq \frac{1}{1-k}\xi_2^2\\
\qquad\qquad\,\,\,\mbox{or}\\
 (1-k')\xi_1^2\leq\xi_2\leq \frac{1}{1-k'}\xi_1^2
\end{array}
\right\}_{0<k, k'<1} .
\end{equation*}
\end{itemize}
Applying  Theorem \ref{MAINTHM} we obtain that for any solution $u$ of the equation $B(x,\partial)u=f\in H^p_{\lambda^s,\textup{loc}}(\mathbb R^2)$, the filter of Sobolev singularities $\mathcal W^p_{\lambda^{s+1}, x^0} u$ at every point $x^0=(x^0_1,x^0_2)$ belonging to the coordinate axes (that is such that $x^0_1x^0_2=0$) contains the characteristic filter $\Sigma_{x^0} b$.
\end{ex}

\end{document}